\newtheorem{theorem}{Theorem}[section]
\newtheorem{lemma}[theorem]{Lemma}
\newtheorem{corollary}[theorem]{Corollary}
\newtheorem{proposition}[theorem]{Proposition}
\theoremstyle{definition}
\newtheorem{remark}[theorem]{Remark}
\numberwithin{equation}{section}
\begin{document}
\title[Heat kernel on smooth metric measure spaces and applications]
{Heat kernel on smooth metric\\ measure spaces and applications}
\author{Jia-Yong Wu}
\address{Department of Mathematics, Shanghai Maritime University,
Haigang Avenue 1550, Shanghai 201306, P. R. China}
\address{Department of Mathematics, Cornell University, Ithaca, NY 14853,
United States} \email{jywu81@yahoo.com}
\author{Peng Wu}
\address{Department of Mathematics, Cornell University,
Ithaca, NY 14853, United States} \email{wupenguin@math.cornell.edu}
\thanks{}
\subjclass[2010]{Primary 35K08; Secondary 53C21, 58J35.}
\dedicatory{}
\date{\today}

\keywords{smooth metric measure space, Bakry-\'Emery Ricci
curvature, heat kernel, Harnack inequality, Liouville theorem,
eigenvalue, Green's function, parabolicity.}
\begin{abstract}
We derive a Harnack inequality for positive solutions of the $f$-heat
equation and Gaussian upper and lower bounds for the $f$-heat kernel
on complete smooth metric measure spaces $(M,g,e^{-f}dv)$ with
Bakry-\'Emery Ricci curvature bounded below. The lower bound is
sharp. The main argument is the De Giorgi-Nash-Moser theory. As
applications, we prove an $L_f^1$-Liouville theorem for
$f$-subharmonic functions and an $L_f^1$-uniqueness theorem for
$f$-heat equations when $f$ has at most linear growth. We also
obtain eigenvalues estimates and $f$-Green's function
estimates for the $f$-Laplace operator.
\end{abstract}
\maketitle


\section{Introduction}

Heat kernel estimate is one of the fundamental problems in
Riemannian geometry. For Riemannian manifolds, there are two
classical methods for the heat kernel estimate. One is the gradient
estimate technique developed by Li and Yau \cite{[Li-Yau1]}, using
which they derived two-sided Gaussian bounds for the heat kernel on
Riemannian manifolds with Ricci curvature bounded below. The other
is the Moser iteration technique invented by Moser \cite{[Moser]}.
Grigor'yan \cite{[Grig]} and Saloff-Coste \cite{[Saloff], [Saloff1],
[Saloff2]} developed this technique and independently derived heat
kernel estimates on Riemannian manifolds satisfying volume doubling
property and the Poincar\'e inequality. There has been lots of work
on improving heat kernel estimates on Riemannian manifolds, and
generalizing heat kernel estimates to general spaces, see excellent
surveys \cite{[Grig2],[Grig3],[Saloff2]} and references therein.

In this paper we will investigate heat kernel estimates on smooth
metric measure spaces and various applications. Let $(M,g)$ be an
$n$-dimensional complete Riemannian manifold, and let $f$ be a
smooth function on $M$. Then the triple $(M,g,e^{-f}dv)$ is called a
complete smooth metric measure space, where $dv$ is the volume
element of $g$, and $e^{-f}dv$ (for short, $d\mu$) is called the
weighted volume element or the weighted measure. On a smooth metric
measure space, the $m$-Bakry-\'Emery Ricci curvature
\cite{[BE],[Qian],[Lott1]} is defined by
\[
\mathrm{Ric}_f^m:=\mathrm{Ric}+\nabla^2f-\frac{1}{m}df\otimes df,
\]
where $\mathrm{Ric}$ is the Ricci curvature of $(M, g)$, $\nabla^2$
is the Hessian with respect to $g$, and
$m\in\mathbb{R}\cup\{\pm\infty\}$ (when $m=0$ we require $f$ to be a
constant). $m$-Bakry-\'Emery Ricci curvature is a natural
generalization of Ricci curvature on Riemannian manifolds, see
\cite{[BE],[BQ1],[Lott1],[Lo-Vi],[WW]} and references therein. In
particular, a smooth metric measure space satisfying
\[
\mathrm{Ric}_f^m=\lambda g,
\]
for some $\lambda\in\mathbb{R}$, is called an $m$-quasi-Einstein
manifold (see \cite{[CSW]}), which can be considered as natural
generalization of Einstein manifold. When $0<m<\infty$, $(M^n\times
F^m, g_M+e^{-2\frac{f}{m}}g_F)$, with $(F^m,g_F)$ an Einstein
manifold, is a warped product Einstein manifold. When $m=2-n$,
$(M^n, g)$ is a conformally Einstein manifold, in fact $\bar
g=e^{\frac{f}{(n-2)}}g$ is the Einstein metric. When $m=1$, $(M^n,
g)$ is the so-called static manifold in general relativity. When
$m=\infty$, we write
\[\mathrm{Ric}_f=\mathrm{Ric}_f^{\infty},\]
and the quasi-Einstein equation reduces to a gradient Ricci soliton.
The gradient Ricci soliton is called shrinking, steady, or
expanding, if $\lambda>0$, $\lambda=0$, or $\lambda<0$,
respectively. Ricci solitons play an important role in the Ricci
flow and Perelman's resolution of Poincar\'e conjecture and
geometrization conjecture, see \cite{[Cao1],[Hami]} and references
therein for nice surveys.

On a smooth metric measure space $(M,g,e^{-f}dv)$, the $f$-Laplacian
$\Delta_f$ is defined as
\[
\Delta_f=\Delta-\nabla f\cdot\nabla,
\]
which is self-adjoint with respect to $e^{-f}dv$. The $f$-heat
equation is defined as
\[
(\partial_t-\Delta_f)u=0.
\]
We denote the $f$-heat kernel by $H(x,y,t)$, that is, for each $y\in
M$, $H(x,y,t)=u(x,t)$ is the minimal positive solution of the
$f$-heat equation satisfying the initial condition $\lim_{t\to
0}u(x,t)=\delta_{f,y}(x)$, where $\delta_{f,y}(x)$ is the $f$-delta
function defined by
\[
\int_M\phi(x)\delta_{f,y}(x)e^{-f}dv=\phi(y)
\]
for any $\phi\in C_0^{\infty}(M)$. Similarly a function $u$ is said
to be $f$-harmonic if $\Delta_f u=0$, and $f$-subharmonic
($f$-superharmonic) if $\Delta_fu\geq 0\ (\Delta_f u\leq 0)$. It is
easy to see that the absolute value of an $f$-harmonic function is a
nonnegative $f$-subharmonic function. The weighted $L^p$-norm (or
$L_f^p$-norm) is defined as
\[
\|u\|_p=\left(\int_M |u|^pe^{-f}dv\right)^{1/p}
\]
for any $0<p<\infty$. We say that $u$ is $L_f^p$-integrable, i.e.
$u\in L_f^p$, if $\|u\|_p<\infty$.

\

Recall that for Riemannian manifolds,
using the classical Bochner formula, Li-Yau \cite{[Li-Yau1]} derived
the gradient estimate and heat kernel estimate. For smooth metric
measure spaces with $m<\infty$, there is an analogue of Bochner
formula for $\mathrm{Ric}_f^m$,
\begin{equation} \label{weightedBochner}
\begin{split}
\frac{1}{2}\Delta_f|\nabla u|^2=&|\nabla^2
u|^2+\langle\nabla\Delta_f u, \nabla u\rangle+\mathrm{Ric}_f^m(\nabla u, \nabla u)+\frac{1}{m}|\langle\nabla f,\nabla u\rangle|^2\\
\geq& \frac{(\Delta_f u)^2}{m+n}+\langle\nabla\Delta_f u, \nabla
u\rangle+\mathrm{Ric}_f^m(\nabla u, \nabla u).
\end{split}
\end{equation}
Therefore when $m<\infty$, the Bochner formula for
$\mathrm{Ric}_f^m$ can be considered as the Bochner formula for the
Ricci tensor of an $(n+m)$-dimensional manifold, and for smooth
metric measure spaces with $\mathrm{Ric}_f^m$ bounded below,
one has nice $f$-mean curvature comparison and $f$-volume comparison
theorems which are similar to classical ones for Riemannian
manifolds, see \cite{[BQ1], [WW]}, in particular, the comparison
theorems do not depend on $f$; X.-D. Li \cite{[LD]} derived an
analogue of Li-Yau gradient estimate, using which he proved $f$-heat
kernel estimates and several Liouville theorems; and in
\cite{[ChLu]}, by analyzing a family of warped product manifolds,
Charalambous and Z. Lu obtained $f$-heat kernel estimates and
essential spectrum.

Unfortunately when $m=\infty$, due to the lack of the extra term
$\frac{1}{m}|\langle\nabla f,\nabla u\rangle|^2$ in the Bochner
formula \eqref{weightedBochner}, one can only derive local $f$-mean
curvature comparison and local $f$-volume comparison (see
\cite{[WW]}) which highly rely on the potential function $f$, and
this makes it much more difficult to investigate smooth metric
measure spaces with $\mathrm{Ric}_f$ bounded below. According to
\cite{[MuWa], [MuWa2]}, there seems essential obstacles to derive
Li-Yau gradient estimate directly using the Bochner formua
\eqref{weightedBochner}, even with strong growth assumption on $f$.
It is interesting to point out that for $f$-harmonic functions,
Munteanu and Wang \cite{[MuWa], [MuWa2]} obtained Yau's gradient
estimate using both Yau's idea and the De Giorgi-Nash-Moser theory,
under appropriate assumptions on $f$.

In this paper we observe that, without any assumption on $f$, one
can derive a Harnack inequality for positive solutions of $f$-heat
equation, and local Gaussian bounds for the $f$-heat kernel on
smooth metric measure spaces using the De Giorgi-Nash-Moser theory.
Moreover, similar to \cite{[MuWa], [MuWa2]}, in each step one needs to figure
out the accurate coefficients, which play
key roles in the applications. As applications, we prove a Liouville
theorem for $f$-subharmonic functions, eigenvalues estimates for the
$f$-Laplacian, and $f$-Green's functions estimates.

Let us first state the local $f$-heat kernel estimates,

\begin{theorem}\label{Main1}
Let $(M,g,e^{-f}dv)$ be an $n$-dimensional complete noncompact
smooth metric measure space with $\mathrm{Ric}_f\geq -(n-1)K$ for
some constant $K\geq0$. For any point $o\in M$ and $R>0$, denote
\[
A(R)=\sup_{x\in B_o(3R)}|f(x)|,\quad A'(R)=\sup_{x\in
B_o(3R)}|\nabla f(x)|.
\]
Then for any $\epsilon>0$, there exist constants $c_1(n,\epsilon)$,
$c_i(n)$, $2\leq i\leq 6$  such that
\begin{equation}\label{uppfu1}
\begin{split}
&\frac{c_1\,e^{c_2A+c_3(1+A)\sqrt{Kt}}}
{V_f(B_x(\sqrt{t})^{1/2}V_f(B_y(\sqrt{t})^{1/2}}
\exp\left(-\frac{d^2(x,y)}{(4+\epsilon)t}\right)\\
&\hspace{4.8cm}\geq H(x,y,t)\geq
\frac{c_4e^{-c_5(A'^2+K)t}}{V_f(B_x(\sqrt{t}))}
\exp\left(-\frac{d^2(x,y)}{c_6t}\right)
\end{split}
\end{equation}
for all $x,y\in B_o(\frac 12R)$ and $0<t<R^2/4$. $\lim_{\epsilon\to
0}c_1(n,\epsilon)=\infty$.
\end{theorem}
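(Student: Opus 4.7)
The plan is to adapt the De Giorgi--Nash--Moser machinery to the weighted setting, tracking carefully how $f$ and $|\nabla f|$ enter the constants at each step. In the case $m=\infty$, volume and mean-curvature comparison under $\mathrm{Ric}_f\geq -(n-1)K$ are inherently local and depend on pointwise bounds of $f$; this is the source of the factors $e^{c_2 A}$, $e^{c_3(1+A)\sqrt{Kt}}$, and $e^{-c_5(A'^2+K)t}$ appearing in \eqref{uppfu1}.

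First I would use the local $f$-volume comparison of Wei--Wylie to establish on $B_o(3R)$ a doubling inequality of the form $V_f(B_x(2r))\leq e^{c(n)A+c(n)\sqrt{K}r}V_f(B_x(r))$. Combined with an $f$-segment inequality, this yields a weighted Neumann--Poincar\'e inequality on balls, and hence, via Saloff-Coste's argument, a local weighted $L^2$-Sobolev inequality whose constants depend polynomially on $e^A$ and $\sqrt{K}R$. Since $\Delta_f$ is self-adjoint with respect to $d\mu=e^{-f}dv$, the standard Caccioppoli estimate and Moser iteration extend verbatim to give $L^2\to L^\infty$ mean-value inequalities for sub- and super-solutions of the $f$-heat equation. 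Feeding these into the Bombieri--Giusti abstract lemma produces a local parabolic Harnack inequality on cylinders in $B_o(R/2)\times(0,R^2/4)$ with Harnack constant of order $e^{c_2 A+c_3(1+A)\sqrt{Kt}}$.

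The lower bound is then obtained by the usual chaining argument: the on-diagonal mean-value inequality gives $H(y,y,t)\gtrsim V_f(B_y(\sqrt{t}))^{-1}$, and iterating Harnack along a minimal geodesic joining $x$ to $y$ on the order of $1+d(x,y)^2/t$ times transports this estimate, producing the Gaussian factor $\exp(-d^2(x,y)/c_6 t)$ together with the prefactor $e^{-c_5(A'^2+K)t}$; the $A'^2$ contribution arises from controlling the cross term $\langle\nabla f,\nabla u\rangle$ that appears when passing between $\Delta$- and $\Delta_f$-based estimates along the chain. For the symmetric upper bound I would apply Davies' integrated maximum principle to $H(\cdot,y,t)\,e^{\psi/2}$ with $\psi$ a suitable $1$-Lipschitz function vanishing near $y$, differentiate in $t$ to control the off-diagonal $L^2_f$-mass, and then invoke the on-diagonal $L^2\to L^\infty$ Moser bound at both endpoints to obtain the geometric mean of volumes in the denominator and the sharp Gaussian rate $(4+\epsilon)$; the $\epsilon$-loss, absorbed into $c_1(n,\epsilon)$, comes from optimizing the Davies parameter.

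The main obstacle is keeping the dependence on $A$ and $A'$ as clean as in \eqref{uppfu1}. Because $\mathrm{Ric}_f$ is not the Ricci tensor of any higher-dimensional manifold, essentially every comparison estimate and every integration by parts against $e^{-f}dv$ threatens to introduce an additional $e^A$ or $|\nabla f|$ factor, and naive bookkeeping would produce constants with undesirable cross terms such as $A\cdot Kt$ or $A^2$. Obtaining exponents linear in $A$ on the upper side and quadratic in $A'$ on the lower side requires exploiting the self-adjointness of $\Delta_f$ to eliminate $\nabla f$ from all but a few controlled places in the Moser iteration, and choosing the Davies weight carefully so that the optimal Gaussian rate $(4+\epsilon)$ decouples from $A$.
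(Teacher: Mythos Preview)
Your overall architecture matches the paper's: Wei--Wylie volume comparison $\Rightarrow$ doubling and Neumann--Poincar\'e $\Rightarrow$ Sobolev $\Rightarrow$ mean-value inequalities $\Rightarrow$ (via an abstract Bombieri--Giusti/Moser lemma) Harnack $\Rightarrow$ lower bound by chaining; and mean-value plus a Davies-type integral bound for the upper estimate. But there is a genuine gap in your treatment of the Harnack constant, and it is exactly the step that produces the exponent $c_5(A'^2+K)t$.

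The doubling constant and the Poincar\'e constant on $B_x(r)$ are themselves of order $e^{c(A'+\sqrt{K})r+cA'\sqrt{K}r^2}$ (this is where $f$ enters, via volume comparison; there is no cross term $\langle\nabla f,\nabla u\rangle$ anywhere, since $\Delta_f$ is self-adjoint for $d\mu$ and the Caccioppoli/Moser calculus is done entirely in the weighted measure). When you feed these into the Bombieri--Giusti lemma you do \emph{not} get a Harnack constant $e^{c_2A+c_3(1+A)\sqrt{Kt}}$ as you claim; you get something doubly exponential, of the form $\exp\bigl(c\,e^{c(A'+\sqrt{K})r+cA'\sqrt{K}r^2}\bigr)$. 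The paper's key step (the proof of Theorem~\ref{Harthm} from the ``raw'' Harnack of Theorem~\ref{wHarn}) is a separate chaining \emph{in scale}: apply the raw Harnack on subcylinders of radius $r'\sim (A'+\sqrt{K})^{-1}$, so that $F(r')$ is bounded by a dimensional constant, and iterate $N\sim (A'+\sqrt{K})^2 r^2$ times. That is precisely where the factor $(A'^2+K)r^2$ comes from---it is the number of chain steps, not a drift term. Your geodesic chaining ``on the order of $1+d(x,y)^2/t$ times'' is the \emph{second} chaining (the paper's Proposition~\ref{coro4.1}), which presupposes the single-exponential Harnack already in hand; without the first scale-chaining your lower bound would carry a doubly-exponential prefactor.

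Two smaller corrections. The on-diagonal lower bound $H(x,x,t)\gtrsim V_f(B_x(\sqrt{t}))^{-1}$ is not a mean-value inequality; mean-value gives upper bounds. In the paper it is obtained by applying the Harnack inequality to $u=P_t\phi$ for a cutoff $\phi$ and then to $H(x,\cdot,\cdot)$. And the upper bound does not pass through Harnack at all: the paper applies the $L^1\to L^\infty$ mean-value inequality (whose constant is the $e^{c_2A+c_3(1+A)\sqrt{K}r}$ you wrote down, coming from the $A$-form of the volume comparison) twice, once in each variable, and then invokes Davies' double-integral estimate $\int_{B_1}\int_{B_2}H\le V_f(B_1)^{1/2}V_f(B_2)^{1/2}e^{-d(B_1,B_2)^2/4t}$; the $\epsilon$ in $(4+\epsilon)$ arises from replacing $d(B_1,B_2)$ by $d(x,y)$.
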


When $f$ is bounded, the first author \cite{[Wu2]} obtained $f$-heat
kernel upper and lower bounds estimates. When $\mathrm{Ric}_f\geq 0$, the
authors \cite{[WuWu]} obtained $f$-heat kernel upper bound estimates
without assumptions on $f$.

It is worthy to point out that the lower bound estimate is sharp.
Indeed, let $(\mathbb{R},\ g_0, e^{-f}dx)$ be a $1$-dimensional
steady Gaussian soliton, where $g_0$ is the Euclidean metric and
$f(x)=\pm x$. From \cite{[WuWu]} the $f$-heat kernel is given by
\[
H(x,y,t)=\frac{e^{\pm \frac{x+y}{2}}\cdot e^{-t/4}}{(4\pi t)^{1/2}}
\times\exp\left(-\frac{|x-y|^2}{4t}\right).
\]
Obviously, the lower bound estimate is achieved by the above
$f$-heat kernel for steady Gaussian soliton as long as $t$ is very
large.

\begin{remark}
The factor $A'$ in the lower bound estimate comes from Harnack
inequality in Theorem \ref{Hartheorem}, it will be more interesting
to derive a new sharp lower bound in terms of $A$ instead of $A'$, if
possible.
\end{remark}

The proof of upper bound estimate of the $f$-heat kernel uses a
weighted mean value inequality and Davies's integral estimate
\cite{[Davies]}. The proof of lower bound estimate follows from a
Harnack inequality and a chaining argument, while the proof of the
Harnack inequality, follows from the arguments in \cite{[Saloff1],
[Saloff2]}.

To state the Harnack inequality, let us first introduce notations,
for any point $x\in M$ and $r>0$, $s\in\mathbb{R}$, and
$0<\varepsilon<\eta<\delta<1$, we denote $B=B_x(r)$, $\delta
B=B_x(\delta r)$ and
\[
Q=B\times (s-r^2,s),\quad Q_-=\delta B\times (s-\delta r^2,s-\eta
r^2), \quad Q_+=\delta B\times (s-\varepsilon r^2,s).
\]

\begin{theorem}\label{Hartheorem}
Let $(M,g,e^{-f}dv)$ be an $n$-dimensional complete noncompact
smooth metric measure space with $\mathrm{Ric}_f\geq-(n-1)K$ for
some constant $K\geq0$. Let $u$ be a positive solution to the
$f$-heat equation in $Q$, there exist constants $c_1$ and $c_2$
depending on $n$, $\varepsilon$, $\eta$ and $\delta$, such that
\[
\sup_{Q_-}\ u\leq c_1e^{c_2(A'^2+K)r^2}\inf_{Q_+}\ u,
\]
where $A'(r)=\sup_{y\in B_x(3r)}|\nabla f(y)|$.
\end{theorem}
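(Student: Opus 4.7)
The plan is to run the parabolic De Giorgi--Nash--Moser iteration in the style of Grigor'yan and Saloff-Coste, working with $u_t=\Delta_f u$ in its divergence form $u_t = e^{f}\operatorname{div}(e^{-f}\nabla u)$ so that $\Delta_f$ is self-adjoint with respect to the weighted measure $d\mu=e^{-f}dv$. The Grigor'yan--Saloff-Coste principle reduces the parabolic Harnack inequality on the nested family $Q\supset Q_\pm$ to the combination of a local weighted volume doubling and a local weighted $L^2$ Neumann--Poincar\'e inequality, uniformly for sub-balls $B_x(\rho)\subset B_o(3R)$; the proof therefore splits into verifying those two analytic ingredients with constants depending only on $n$, $K$, and $A'$, and then running Moser's iteration while tracking all accumulated constants.

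For the analytic ingredients, under $\mathrm{Ric}_f\ge -(n-1)K$ with $|\nabla f|\le A'$ on $B_o(3R)$, the Wei--Wylie weighted mean curvature comparison (for $m=\infty$) furnishes a local weighted volume doubling
\[
V_f(B_x(2\rho))\le e^{c(n)(A'+\sqrt{K}\rho)\rho}\,V_f(B_x(\rho))
\]
for every $B_x(2\rho)\subset B_o(3R)$, and a segment/Buser-type argument yields a weighted $(2,2)$-Poincar\'e inequality on the same scale with a prefactor of the same form. Together these imply a weighted $L^2$-Sobolev inequality of dimension $n$. With these in hand, I would test the equation against $\phi^2 u^{p-1}$ for a space-time cutoff $\phi$ and iterate via Sobolev to get, for $p>0$, a mean-value inequality
\[
\sup_{Q_+}u \le c\,e^{c(A'^2+K)r^2}\Bigl(|Q|_{\mu}^{-1}\int_Q u^p\,d\mu\,dt\Bigr)^{1/p},
\]
and analogously an inverse bound for $\inf_{Q_-}u$ via $p<0$. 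Separately, testing against $\phi^2/u$ and applying the weighted Poincar\'e inequality to $w:=\log u$ produces Moser's logarithmic estimate, i.e.\ weak-$L^1$ level-set decay for $w$ above and below an intermediate constant. The Bombieri--Giusti abstract lemma then couples the $L^p\to L^\infty$ bounds with the log estimate, eliminates the $L^p$ norm, and yields the desired $\sup_{Q_-}u\le c_1 e^{c_2(A'^2+K)r^2}\inf_{Q_+}u$.

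The delicate point is the coefficient bookkeeping: the prefactor must depend on $A'$ but \emph{not} on $A=\sup|f|$. The Wei--Wylie comparison has this property because $f$ enters only through $\nabla f$, so the doubling and Poincar\'e constants involve only $A'$ and $K$; the use of the self-adjoint divergence form $e^{f}\operatorname{div}(e^{-f}\nabla\cdot)$ together with integration against $d\mu$ ensures that no bare $f$ appears in the energy estimates either. Keeping these choices intact through every Moser step---so that the linear-in-$A'$ doubling contributions combine (via $A'\rho\le\tfrac12(A'^2 r^2+1)$ and the parabolic scale $r^2$) into the single exponential $e^{c(A'^2+K)r^2}$, rather than leaking into a dependence on $A$ or on the unweighted geometry---is the main technical obstacle, and is the feature that makes this Harnack inequality directly usable for the lower bound in Theorem~\ref{Main1}.
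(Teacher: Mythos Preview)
Your outline correctly identifies the ingredients (Wei--Wylie comparison in the $A'$ form, weighted doubling, Buser-type Poincar\'e, Moser iteration, log estimate, Bombieri--Giusti lemma), and the emphasis on working only through $\nabla f$ so that $A$ never appears is exactly right. However, there is a real gap in the bookkeeping step, and it is precisely the step the paper does not skip.

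Running Moser iteration plus Bombieri--Giusti at scale $r$ does \emph{not} produce the constant $e^{c(A'^2+K)r^2}$. The doubling and Poincar\'e constants at scale $r$ are of size $e^{c(A'+\sqrt K)r+cA'\sqrt K r^2}$, and the Bombieri--Giusti abstract lemma outputs $\exp\bigl(C_1(1+C^3)\bigr)$ with $C$ comparable to those constants. Hence the raw Harnack constant one gets is
\[
\exp\!\Bigl(c\,e^{c(A'+\sqrt K)r+cA'\sqrt K r^2}\Bigr),
\]
which is \emph{doubly} exponential in $(A'+\sqrt K)r$. Your proposed device $A'\rho\le\tfrac12(A'^2r^2+1)$ only rewrites the exponent of the doubling constant; it does nothing to remove the outer exponential coming from the $C^3$ in Bombieri--Giusti. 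The paper's Theorem~\ref{wHarn} records exactly this ``bad'' Harnack inequality.

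The missing idea is a final chaining step (the paper's proof of Theorem~\ref{Harthm} from Theorem~\ref{wHarn}): one applies the weak Harnack not at scale $r$ but at scale $r'=(\tau/N)^{1/2}$ with $N\sim (A'+\sqrt K)^2r^2$, so that $(A'+\sqrt K)r'\sim 1$ and the doubling/Poincar\'e constants on each sub-cylinder are $O_n(1)$; then the product of $N+1$ such Harnack factors gives $e^{cN}\sim e^{c(A'^2+K)r^2}$. Without this chaining the stated form of the constant is not obtained.

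Two minor points: your mean-value inequalities have $Q_+$ and $Q_-$ interchanged (the $L^p\!\to\!L^\infty$ bound for $u$ lives on $Q_-$, and for $u^{-1}$ on $Q_+$); and the Sobolev inequality one extracts from weighted doubling and Poincar\'e has some effective exponent $\nu>2$, not necessarily $n$.
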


By a different volume comparison, we get another form of Harnack
inequality and lower bound estimate for the $f$-heat kernel,
\begin{theorem} \label{lowerbound2}
Under the assumptions of Theorem \ref{Hartheorem} and Theorem
\ref{Main1}, respectively, we have
\[
\sup_{Q_-}\{u\}\leq
\exp\left\{c_1e^{c_2A}\left[(1+A^2)Kr^2+1\right]\right\}\cdot\inf_{Q_+}\{u\},
\]
where $A=A(r)=\sup_{y\in B_x(3r)}|f(y)|$, and
\begin{equation}\label{lowhe2x}
H(x,y,t)\geq \frac{c_4}{V_f(B_x(\sqrt{t}))}
\times\exp\left[-c_5e^{c_6A}\left((1+A^2)Kt+1+\frac{d^2(x,y)}{t}\right)\right],
\end{equation}
for all $x,y\in B_o(\frac 12R)$ and $0<t<R^2/4$, where
$A=A(R)=\sup_{x\in B_o(3R)}|f(x)|$. In particular, when $f$ is
bounded, we get
\begin{equation}\label{lowhe2x}
H(x,y,t)\geq \frac{c_1e^{-c_2Kt}}{V_f(B_x(\sqrt{t}))}
\times\exp\left(-\frac{d^2(x,y)}{c_3t}\right).
\end{equation}
\end{theorem}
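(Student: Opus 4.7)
The plan is to re-run the proofs of Theorem \ref{Hartheorem} and Theorem \ref{Main1}, but to feed the De Giorgi--Nash--Moser machinery a different volume comparison, one that controls the weighted volume ratio in terms of $\sup |f|$ instead of $\sup|\nabla f|$. The natural candidate is the Wei--Wylie $f$-volume comparison: under $\mathrm{Ric}_f\geq -(n-1)K$, if $|f|\leq A$ on $B_x(R)$, then for all $0<r_1<r_2\leq R$,
\[
\frac{V_f(B_x(r_2))}{V_f(B_x(r_1))}\leq e^{c\,A}\,\frac{V_K^n(r_2)}{V_K^n(r_1)},
\]
where $V_K^n$ is the volume in the $n$-dimensional simply connected space of constant curvature $-K$. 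This yields $f$-volume doubling on $B_x(r)$ with constant $\leq c_1 e^{c_2 A}(1+\sqrt K r)^{n-1}$ and, by the usual Buser-type route, a weighted Neumann--Poincar\'e inequality whose constant has the same shape.

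With these two inputs, I would replay the Moser iteration that produced Theorem \ref{Hartheorem} step by step, keeping the same cut-off functions and the same parabolic Bombieri--Giusti/John--Nirenberg step, but tracking the constants accurately (as emphasized in the introduction). The $L^2$--$L^\infty$ mean-value inequalities for $f$-subsolutions and $f$-supersolutions acquire doubling/Sobolev constants that are $\exp(c_1 e^{c_2 A})$; the cross-over step that matches $\sup$ of $\log u$ to $\inf$ of $\log u$ is where the $Kr^2$-term enters, and the $A^2$-factor in front of $Kr^2$ is produced by the Poincar\'e constant under Wei--Wylie, whose derivation requires estimating $|\nabla f|$ through $|f|$ at the cost of a factor $A^2$. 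Putting these together gives the announced Harnack inequality
\[
\sup_{Q_-} u \leq \exp\bigl\{c_1 e^{c_2 A}\bigl[(1+A^2)Kr^2+1\bigr]\bigr\}\inf_{Q_+} u.
\]

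The Gaussian lower bound \eqref{lowhe2x} then follows from exactly the chaining argument already used for Theorem \ref{Main1}: join $x$ to $y$ by a string of $N\sim 1+d^2(x,y)/t$ balls of radius $\sqrt{t/N}$, apply the new Harnack inequality on consecutive parabolic cylinders, and combine the product with the on-diagonal estimate $H(x,x,t)\geq c/V_f(B_x(\sqrt t))$ (which itself comes from integrating $H(x,\cdot,t)$ against $e^{-f}dv$ on a ball and invoking Harnack once). Specializing to bounded $f$, the quantities $e^{c_6 A}$ and $(1+A^2)$ collapse into absolute constants and the simplified estimate \eqref{lowhe2x} drops out.

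The main obstacle I anticipate is the bookkeeping in the Moser iteration: one must verify that, with the Wei--Wylie inputs, the constants genuinely compose to the advertised $\exp\{c_1 e^{c_2 A}[(1+A^2)Kr^2+1]\}$ rather than producing unwanted higher powers of $A$ or a worse dependence on $Kr^2$. In particular, the quadratic-in-$A$ factor must be traced carefully through the Poincar\'e constant and not through the doubling constant, since otherwise it would combine multiplicatively with the $e^{c_2 A}$ prefactor. Once the constants are pinned down, the chaining step and the bounded-$f$ specialization are routine.
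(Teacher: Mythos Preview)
Your overall plan---feed the Moser machinery the $|f|$-based volume comparison and rerun the proofs of Theorem~\ref{Hartheorem} and Theorem~\ref{Main1} verbatim---is exactly the paper's approach (this is the content of the Remark following Theorem~\ref{Harthm}). The chaining route to the lower bound and the bounded-$f$ specialization are also as in the paper. But two details in your proposal are wrong, and they are precisely the details that determine the form of the constant you are worried about.

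First, the volume comparison you write down is not the correct Wei--Wylie statement. Under $\mathrm{Ric}_f\ge -(n-1)K$ and $|f|\le A$, the comparison (Lemma~\ref{comp}, inequality~\eqref{volcomp}) is against the $(n+4A)$-dimensional hyperbolic model $V_K^{n+4A}$, not against the $n$-dimensional model with an $e^{cA}$ prefactor. There is no way to drop to the $n$-dimensional model here: $|f|\le A$ gives $e^{-2A}\le V_f(B)/V(B)\le e^{2A}$, but you have no control on the unweighted $\mathrm{Ric}$, so Bishop--Gromov for $V$ is unavailable. Consequently the doubling constant is $2^{n+4A}e^{2(n-1+4A)\sqrt{K}r}$, of the form $e^{c_2A+c_3(1+A)\sqrt{K}r}$; the cross term $A\sqrt{K}r$ is essential and is absent from your claimed $c_1e^{c_2A}(1+\sqrt{K}r)^{n-1}$. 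The Poincar\'e and Sobolev constants in Lemmas~\ref{NeuPoin}--\ref{NeuSob} inherit exactly this shape.

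Second, the factor $(1+A^2)Kr^2$ does not come from any step that ``estimates $|\nabla f|$ through $|f|$'' (no such estimate exists in general). It appears in the final chaining that upgrades the raw Harnack of Theorem~\ref{wHarn} to the scale-uniform Harnack of Theorem~\ref{Harthm}. With the $A$-version one has $F(r)=e^{c_2A+c_3(1+A)\sqrt{K}r}$ in Theorem~\ref{wHarn}; to make $F(r')$ bounded on the subcylinders one must take $r'=r/\sqrt{N}$ with $N\gtrsim 1+(1+A)^2Kr^2$, and the product over the chain gives
\[
\exp\bigl\{c_4\,F(r')\,N\bigr\}\ \le\ \exp\bigl\{c_1e^{c_2A}\bigl[(1+A^2)Kr^2+1\bigr]\bigr\}.
\]
So the quadratic $A$-dependence sits in the number of chain steps, not in the Poincar\'e constant. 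Once you correct these two points, the rest of your outline goes through and matches the paper.
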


\

Next we derive several applications of the $f$-heat kernel
estimates. First we prove a Liouville theorem for $f$-subharmonic
functions. Recall Pigola, Rimoldi, Setti \cite{[PRG]} proved that
any nonnegative $L_f^1$-integrable $f$-superharmonic function must
be constant if $\mathrm{Ric}_f$ is bounded below, without any
assumption on $f$. However, as proved in \cite{[WuWu]}, for
$f$-subharmonic functions, the condition on $f$ is necessary. In
fact we provided explicit counterexamples illustrating that $f$
cannot grow faster than quadratically when $\mathrm{Ric}_f\geq 0$.
Below we show that the $L_f^1$-Liouville theorem also holds for
$f$-subharmonic functions when $\mathrm{Ric}_f\geq -(n-1)K$ and $f$
has at most linear growth.

\begin{theorem}\label{Mainthm2}
Let $(M,g,e^{-f}dv)$ be an $n$-dimensional complete noncompact
smooth metric measure space with $\mathrm{Ric}_f\geq -(n-1)K$ for
some constant $K>0$. Assume there exist nonnegative constants $a$
and $b$ such that
\[
|f|(x)\leq ar(x)+b,
\]
where $r(x)$ is the distance function to a fixed point $o\in M$.
Then any nonnegative $L_f^1$-integrable $f$-subharmonic function
must be identically constant. In particular, any $L_f^1$-integrable
$f$-harmonic function must be identically constant.
\end{theorem}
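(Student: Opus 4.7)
The plan is to use the $f$-heat semigroup $P_t u(x):=\int_M H(x,y,t)\,u(y)\,e^{-f(y)}\,dv(y)$ to upgrade the $f$-subharmonic function $u$ first to an $f$-harmonic function and then to a constant. The argument splits into three steps.

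\textbf{Step 1: $f$-stochastic completeness.} Under $\mathrm{Ric}_f\ge-(n-1)K$ and $|f|(x)\le ar(x)+b$, the weighted volume comparison of Wei--Wylie yields $V_f(B_o(R))\le C\exp(CR)$, with $C$ depending only on $n,K,a,b$. This exponential bound is far below the $\exp(CR^2)$ threshold of Grigor'yan's $L^1$-volume test, whose weighted analogue (valid because $\Delta_f$ is self-adjoint in $L^2_f$) gives the $f$-stochastic completeness $\int_M H(x,y,t)\,e^{-f(y)}\,dv(y)\equiv 1$ for all $(x,t)$.

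\textbf{Step 2: $P_t u\equiv u$, so $u$ is $f$-harmonic.} The upper Gaussian bound of Theorem \ref{Main1} guarantees that $P_t u$ is well-defined, smooth, and solves the $f$-heat equation. Setting $w(x,t):=P_t u(x)-u(x)$, one has $w(\cdot,0)=0$ and $(\partial_t-\Delta_f)w=\Delta_f u\ge 0$; Duhamel's formula then gives $w(x,t)=\int_0^t P_{t-s}(\Delta_f u)(x)\,ds\ge 0$, that is, $P_t u\ge u$ pointwise. On the other hand, Fubini combined with the sub-Markov property $\int_M H(x,y,t)\,e^{-f(x)}\,dv(x)\le 1$ (which is equality by Step 1) yields $\int_M P_t u\,d\mu\le\int_M u\,d\mu<\infty$, so the nonnegative difference $P_t u - u$ has nonpositive integral and therefore vanishes identically. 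Hence $P_t u\equiv u$, $\Delta_f u\equiv 0$, and $u$ is $f$-harmonic.

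\textbf{Step 3: $u$ is constant.} With $u$ nonnegative and $f$-harmonic, I would apply the Harnack inequality in the form of Theorem \ref{lowerbound2} (whose constant depends only on $A(R)\le aR+b$, not on $|\nabla f|$) on expanding balls $B_o(R)$. Coupled with the strong maximum principle and the $L^1_f$ hypothesis, this forces $u\equiv\inf_M u$. The ``in particular'' statement for $f$-harmonic $u\in L^1_f$ follows by applying Steps 1--3 to the nonnegative $f$-subharmonic function $|u|$ and then using connectedness of $M$.

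The main obstacle I foresee is \textbf{Step 2}: rigorously justifying both $P_t u\ge u$ and $\int_M P_t u\,d\mu\le\int_M u\,d\mu$ for an $L^1_f$-integrable (and a priori unbounded) $f$-subharmonic function, without any pointwise control on $f$ or $|\nabla f|$. This requires combining $f$-stochastic completeness from Step 1 with the local Gaussian heat kernel estimates of Theorem \ref{Main1} to control the semigroup action uniformly on non-compact pieces, so the whole force of the hypothesis $|f|(x)\le ar(x)+b$ is ultimately concentrated in Step 1.
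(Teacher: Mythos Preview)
Your overall strategy---show $P_t u=u$ via stochastic completeness together with a Duhamel/integration-by-parts step, deduce $\Delta_f u=0$, then conclude constancy---is exactly the route the paper takes, following Li's classical $L^1$-Liouville argument. But you have misidentified where the linear-growth hypothesis on $f$ actually does its work, and Step~3 as written does not go through.

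\textbf{Step 1.} Stochastic completeness does \emph{not} need the assumption $|f|\le ar+b$: Wei--Wylie's Theorem~4.1 already gives $V_f(B_o(R))\le A+B\exp\bigl(\tfrac{(n-1)K}{2}R^2\bigr)$ from $\mathrm{Ric}_f\ge-(n-1)K$ alone, and this suffices for Grigor'yan's criterion (this is precisely \eqref{integ}--\eqref{stoch} in the paper). Your claimed bound $V_f\le Ce^{CR}$ is in fact false---the comparison dimension in Lemma~\ref{comp} is $n+4A(R)$ with $A(R)\sim aR$, which yields growth $e^{cR^2}$---but this is harmless here.

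\textbf{Step 2.} This is where the linear growth of $f$ is genuinely indispensable, contrary to your last paragraph. The paper's Proposition~\ref{integra} is precisely the rigorous version of your ``Duhamel then gives $w\ge 0$'': one must show that the boundary terms in Green's formula on $B_o(R)$,
\[
\int_{\partial B_o(R)}H|\nabla h|\,d\mu_\sigma\quad\text{and}\quad \int_{\partial B_o(R)}|\nabla H|\,h\,d\mu_\sigma,
\]
vanish as $R\to\infty$. These are controlled via the mean value inequality (Proposition~\ref{PoinDouHarn}) and the heat-kernel upper bound (Corollary~\ref{Mainco}), both of which carry factors $e^{c_2A(R)+c_3(1+A(R))\sqrt{K}R}$. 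With $A(R)\le aR+b$ these factors are at worst $e^{CR^2}$ and are beaten by the Gaussian $e^{-R^2/ct}$ for $t$ small; the semigroup property then extends to all $t$. If $f$ grew quadratically, $A(R)\sim R^2$ would give $e^{CR^3}$ and the boundary terms would not vanish---this is exactly why counterexamples exist for quadratic $f$, as the paper notes.

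\textbf{Step 3.} The Harnack route fails: the constant in Theorem~\ref{lowerbound2} is $\exp\bigl\{c_1e^{c_2A(R)}[(1+A(R)^2)KR^2+1]\bigr\}$, which with $A(R)\sim R$ blows up doubly exponentially, far too fast to be offset by $\inf_{B_o(R)}u\to 0$ or any volume growth. Once Step~2 gives $\Delta_f u=0$, however, $u$ is in particular a nonnegative $L^1_f$ $f$-\emph{super}harmonic function, so you may simply invoke the Pigola--Rimoldi--Setti theorem (quoted in the introduction), which needs no hypothesis on $f$. The paper itself defers this final step to \cite{[WuWu]}.
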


There have been various Liouville type theorems for $f$-subharmonic
and $f$-harmonic functions on smooth metric measure spaces and
gradient Ricci solitons under different conditions, see Brighton
\cite{[Bri]}, Cao-Zhou \cite{[CaoZhou]}, Munteanu-Sesum
\cite{[MuSe]}, Munteanu-Wang \cite{[MuWa],[MuWa2]}, Petersen-Wylie
\cite{[PW]}, and Wei-Wylie \cite{[WW]} for details.

By a similar argument in \cite{[Li0]} (see also \cite{[WuWu]}), we
also prove an $L_f^1$-uniqueness theorem for solutions of $f$-heat
equation, see Theorem \ref{Main3} in Section \ref{sec4}.

\

Second we derive lower bound estimates for eigenvalues of the
$f$-Laplace operator on compact smooth metric measure spaces, by
adapting the classical argument of Li-Yau \cite{[Li-Yau1]},

\begin{theorem}\label{eigenva1}
Let $(M,g,e^{-f}dv)$ be an $n$-dimensional compact smooth metric
measure space with $\mathrm{Ric}_f\geq -(n-1)K$ for some constant
$K\geq0$. Let $0=\lambda_0<\lambda_1\leq\lambda_2\leq\ldots$ be
eigenvalues of the $f$-Laplacian $\Delta_f$. Then there exists a
constant $C$ depending only on $n$ and $A=\max_{x\in M}f(x)$, such
that
\begin{alignat*}{2}
\lambda_k\geq&\frac{C(k+1)^{2/n}}{d^2},&&\quad K=0,\\
\lambda_k\geq&\frac{C}{d^2}\left(\frac{k+1}{\exp(C\sqrt{K}d)}\right)^{\frac{2}{n+4A}},&&\quad
K>0,
\end{alignat*}
for all $k\geq1$, where $d$ is the diameter of $M$.
\end{theorem}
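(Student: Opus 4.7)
The plan is to adapt the classical Li-Yau spectral argument to the weighted setting, using the $f$-heat kernel estimate of Theorem \ref{Main1} as input. Since $M$ is compact and $\Delta_f$ is self-adjoint on $L_f^2(M)$, there is an $L_f^2$-orthonormal basis of eigenfunctions $\{\phi_k\}$, and the spectral expansion $H(x,y,t)=\sum_k e^{-\lambda_k t}\phi_k(x)\phi_k(y)$ yields the trace identity
\[
Z(t):=\sum_{k\geq 0}e^{-\lambda_k t}=\int_M H(x,x,t)\,e^{-f(x)}dv(x).
\]
Since $e^{-\lambda_i t}\geq e^{-\lambda_k t}$ for $i\leq k$, the pigeonhole bound $(k+1)e^{-\lambda_k t}\leq Z(t)$ gives $\lambda_k\geq t^{-1}\log\bigl((k+1)/Z(t)\bigr)$, and the problem reduces to producing a sharp upper bound on $Z(t)$ and then optimizing over $t\in(0,d^2]$.

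Next, I would apply Theorem \ref{Main1} with $R=2d$ so that $B_o(3R)$ contains $M$ and $A(R)\leq A$ (after translating $f$ so that $f\geq 0$, which leaves $\Delta_f$ unchanged); for every $x\in M$ and $0<t\leq d^2$ this yields
\[
H(x,x,t)\leq \frac{c_1 e^{c_2 A+c_3(1+A)\sqrt{Kt}}}{V_f(B_x(\sqrt t))}.
\]
To convert this into a uniform bound on $Z(t)$, I need a lower bound on $V_f(B_x(\sqrt t))$ in terms of $V_f(M)=V_f(B_x(d))$, supplied by the Wei-Wylie $f$-volume comparison for $\mathrm{Ric}_f\geq -(n-1)K$ under the hypothesis $|f|\leq A$. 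When $K=0$ it gives $V_f(B_x(r))/V_f(B_x(R))\geq c(A)(r/R)^n$; integrating against $e^{-f}dv$ produces $Z(t)\leq C(n,A)(d/\sqrt t)^n$, and choosing $t$ of order $d^2/(k+1)^{2/n}$ yields the first inequality $\lambda_k\geq C(k+1)^{2/n}/d^2$. When $K>0$ the comparison picks up an exponential in $\sqrt K d$, and the uniform bound $|f|\leq A$ shifts the effective dimension of the polynomial decay from $n$ to $n+4A$; together with the factor $e^{c_3(1+A)\sqrt{Kt}}$ from Theorem \ref{Main1} (absorbed using $\sqrt t\leq d$) this gives
\[
Z(t)\leq C(n,A)\left(\frac{d}{\sqrt t}\right)^{n+4A}\exp(C\sqrt K d),\qquad 0<t\leq d^2.
\]
Plugging into $(k+1)e^{-\lambda_k t}\leq Z(t)$ and optimizing in $t$ — the critical value being the one that makes the logarithm equal $(n+4A)/2$ — yields the second inequality of the theorem.

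The main obstacle will be establishing the $f$-volume comparison with the precise exponent $n+4A$: in the $\mathrm{Ric}_f^\infty$ setting only a local Wei-Wylie comparison is available and it depends explicitly on $f$, so the bound $|f|\leq A$ must be inserted carefully to promote it to a global ratio comparison between $V_f(B_x(\sqrt t))$ and $V_f(M)$ carrying the claimed polynomial-in-$d/\sqrt t$ and exponential-in-$\sqrt K d$ factors. Once that is in hand, the rest is the standard Li-Yau optimization over $t\in(0,d^2]$.
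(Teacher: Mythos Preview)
Your approach is essentially identical to the paper's: trace formula $Z(t)=\int_M H(x,x,t)\,e^{-f}dv$, the on-diagonal upper bound from Theorem~\ref{Main1}, the Wei--Wylie $f$-volume comparison to replace $V_f(B_x(\sqrt t))$ by $V_f(M)$ times $(\sqrt t/d)^{n}$ (resp.\ $(\sqrt t/d)^{n+4A}e^{-C\sqrt K d}$), and then optimization of $(k+1)e^{-\lambda_k t}\leq Z(t)$ in $t$. The ``obstacle'' you flag---a global ratio comparison with the precise exponent $n+4A$---is exactly the content of Lemma~\ref{comp} (cf.\ \eqref{volcomp} and \eqref{props1}), so no new work is needed there.
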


The upper bound estimates were proved by
Hassannezhad \cite{[Hass]}, and Colbois, Soufi, Savo \cite{[CSS]},
which depend on norms of the potential function and conformal class of the
metric. For the first eigenvalue, there have been more interesting
results. When $M$ is compact and $\mathrm{Ric}_f\geq\frac{a}{2}>0$,
Andrews, Ni \cite{[AN]}, and Futaki, Li, Li \cite{[FLL]} derived
lower bound estimates for the first eigenvalue, which depend on the
diameter of the manifolds. When $M$ is complete noncompact,
Munteanu, Wang \cite{[MuWa],[MuWa2],[MuWa3]}, and Wu \cite{[Wu3]}
obtained first eigenvalues estimates under appropriate assumptions
on $f$. Cheng, Zhou \cite{[CZ]} proved an interesting Obata type
theorem.\\

At last we discuss $f$-Green's functions estimates. We first get
upper and lower estimates for $f$-Green's functions when $f$ is
bounded, which is similar to the classical estimates of Li-Yau
\cite{[Li-Yau1]} for Riemannian manifolds.
Recall the $f$-Green's function on $(M,g,e^{-f}dv)$ is defined as
\[
G(x,y)=\int^\infty_0H(x,y,t)dt
\]
if the integral on the right hand side converges. It is easy to
check that $G$ is positive and satisfies
\[
\Delta_f G=-\delta_{f,y}(x).
\]

\begin{theorem}\label{ThmGreen1}
Let $(M,g,e^{-f}dv)$ be an $n$-dimensional complete noncompact
smooth metric measure space with $\mathrm{Ric}_f\geq 0$ and $f$
bounded. If $G(x,y)$ exists, then there exist constants $c_1$ and
$c_2$ depending only on $n$ and $\sup f$, such that
\begin{equation}\label{Greenest}
c_1\int^\infty_{r^2}V^{-1}_f(B_x(\sqrt{t}))dt\leq G(x,y)\leq
c_2\int^\infty_{r^2}V^{-1}_f(B_x(\sqrt{t}))dt,
\end{equation}
where $r=r(x,y)$.
\end{theorem}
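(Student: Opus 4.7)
The plan is to estimate $G(x,y) = \int_0^\infty H(x,y,t)\,dt$ by integrating the sharp heat kernel bounds already in hand, namely the upper bound of Theorem \ref{Main1} and the lower bound of Theorem \ref{lowerbound2} in the bounded-$f$ case, both specialized to $K=0$. The only structural fact I need beyond these two estimates is weighted volume doubling: under $\mathrm{Ric}_f\geq 0$ and $|f|\leq A$, the Wei--Wylie weighted relative volume comparison yields
\[
V_f(B_x(R))\leq C\left(\frac{R}{r}\right)^N V_f(B_x(r)), \qquad R\geq r,
\]
where $C$ and $N$ depend only on $n$ and $A$. This is what converts geometric decay in $t$ into the integral form appearing on both sides of \eqref{Greenest}.

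For the lower bound, Theorem \ref{lowerbound2} (bounded-$f$, $K=0$) gives $H(x,y,t)\geq c_1 V_f^{-1}(B_x(\sqrt{t}))\exp(-d^2(x,y)/c_3 t)$. I would simply discard $[0,r^2)$ and note that on $t\geq r^2=d^2(x,y)$ the exponential factor is bounded below by $e^{-1/c_3}$, yielding
\[
G(x,y)\geq \int_{r^2}^\infty H(x,y,t)\,dt\geq c_1 e^{-1/c_3}\int_{r^2}^\infty V_f^{-1}(B_x(\sqrt{t}))\,dt.
\]

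For the upper bound, I first use doubling and the triangle inequality to trade the symmetric denominator $\sqrt{V_f(B_x(\sqrt{t}))V_f(B_y(\sqrt{t}))}$ in Theorem \ref{Main1} for $V_f(B_x(\sqrt{t}))$ alone, absorbing the polynomial factor $(1+r/\sqrt{t})^{N/2}$ into the Gaussian exponential at the price of enlarging the constant in the exponent. This reduces matters to bounding
\[
\int_0^\infty \frac{C}{V_f(B_x(\sqrt{t}))}\exp\left(-\frac{d^2(x,y)}{C't}\right)dt
\]
by $C''\int_{r^2}^\infty V_f^{-1}(B_x(\sqrt{t}))\,dt$. Splitting at $t=r^2$, the tail $t\geq r^2$ is immediate after dropping the exponential factor. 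For the short-time piece $t<r^2$ I would decompose dyadically into $t\in[r^2 4^{-(k+1)},r^2 4^{-k}]$ for $k\geq 0$, use doubling to write $V_f(B_x(r))\leq C\,4^{kN/2}V_f(B_x(\sqrt{t}))$, and sum the resulting $\sum_k 4^{k(N/2-1)}\exp(-4^k/C')$, which converges to give a bound of the form $C\,r^2/V_f(B_x(r))$. A final application of doubling on $[r^2,4r^2]$ shows this is itself at most $C\int_{r^2}^{4r^2} V_f^{-1}(B_x(\sqrt{t}))\,dt$, closing the estimate.

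The main technical obstacle is the short-time part: one must handle the interchange between $V_f(B_y(\sqrt{t}))$ and $V_f(B_x(\sqrt{t}))$ uniformly for all $t>0$ and carry out the dyadic sum with constants depending only on $n$ and $\sup f$. Both steps rest entirely on having weighted doubling with uniform constants, which is precisely where the hypotheses $\mathrm{Ric}_f\geq 0$ and $f$ bounded are used; the hypothesis that $G(x,y)$ exists guarantees the finiteness of the tail integral and hence of $G$ itself.
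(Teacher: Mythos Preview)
Your proposal is correct, and the lower bound is exactly what the paper does: integrate the lower heat kernel bound over $[r^2,\infty)$ and drop the exponential factor, which is bounded below there.

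For the upper bound, however, you take a different route for the short-time piece $\int_0^{r^2}$. You use a dyadic decomposition $t\in[r^2 4^{-(k+1)},r^2 4^{-k}]$, compare each $V_f(B_x(\sqrt{t}))$ to $V_f(B_x(r))$ via doubling, and sum the resulting convergent series to obtain $C\,r^2/V_f(B_x(r))$, which you then reabsorb into $\int_{r^2}^{4r^2}V_f^{-1}$. The paper instead performs the change of variable $s=r^4/t$, which sends $(0,r^2)$ to $(r^2,\infty)$ and transforms the integral into
\[
\int_{r^2}^{\infty}V_f^{-1}\!\left(B_x\!\left(\tfrac{r^2}{\sqrt{s}}\right)\right)\exp\!\left(-\tfrac{s}{5r^2}\right)\tfrac{r^4}{s^2}\,ds,
\]
then applies the volume comparison $V_f^{-1}(B_x(r^2/\sqrt{s}))\leq e^{4C}(s/r^2)^n V_f^{-1}(B_x(\sqrt{s}))$ and bounds $(s/r^2)^{n-2}e^{-s/5r^2}$ by a constant. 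The paper's substitution is slicker in that it lands directly on the target integral $\int_{r^2}^\infty V_f^{-1}(B_x(\sqrt{s}))\,ds$ without the intermediate step through $r^2/V_f(B_x(r))$; your dyadic argument is more hands-on but equally valid, and both rest on the same two ingredients (Gaussian upper bound and weighted doubling). Your explicit handling of the conversion from the symmetric denominator $\sqrt{V_f(B_x)V_f(B_y)}$ to $V_f(B_x)$ is a point the paper glosses over.
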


Recently Dai, Sung, Wang, and Wei \cite{[DSWW]} observed that  every
gradient steady Ricci soliton admits a positive $f$-Green's
function, hence it is $f$-nonparabolic. We provide an alternative
proof using a criterion of Li-Tam \cite{[LiTa1],[LiTa2]}, and the
$f$-heat kernel for steady Gaussian Ricci soliton,

\begin{theorem}\label{posGreen} Let $(M^n,g,f)$ be
a complete gradient steady soliton. Then there exists a positive
smooth $f$-Green function, therefore the gradient steady soliton is
$f$-nonparabolic.
\end{theorem}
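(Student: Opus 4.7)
The plan is to apply a criterion of Li and Tam \cite{[LiTa1],[LiTa2]}: a complete smooth metric measure space $(M,g,e^{-f}dv)$ is $f$-nonparabolic, equivalently admits a positive minimal $f$-Green function, as soon as one exhibits a positive non-constant $f$-superharmonic function on $M$. The source of such a function will be the potential $f$ itself, motivated by the explicit $f$-Green function on the $1$-dimensional Gaussian steady soliton recalled after Theorem~\ref{Main1}.

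On a complete gradient steady Ricci soliton $(M^n,g,f)$ the defining equation $\mathrm{Ric}+\nabla^2 f=0$ gives $\mathrm{Ric}_f=0$; tracing it and combining with Hamilton's conservation law $|\nabla f|^2+R=C_0$ (for some constant $C_0\ge 0$) produces
\[
\Delta f=-R,\qquad \Delta_f f=\Delta f-|\nabla f|^2=-C_0.
\]
For any $\beta\in(0,1)$ set $u:=e^{\beta f}$. A direct computation gives
\[
\Delta_f u=\beta e^{\beta f}\bigl[\Delta f+(\beta-1)|\nabla f|^2\bigr]=\beta e^{\beta f}\bigl[(\beta-1)C_0-\beta R\bigr].
\]
In the nontrivial case $C_0>0$ the bracket is strictly negative (since $\beta-1<0$ and $R\ge 0$), so $u$ is a positive, non-constant, strictly $f$-superharmonic function on $M$. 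The Li-Tam criterion then yields the existence of the positive $f$-Green function $G(x,y)$.

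As a consistency check, on the $1$-dimensional steady Gaussian soliton $(\mathbb{R},g_0,e^{-x}dx)$ one has $R\equiv 0$, $C_0=1$, and the explicit $f$-heat kernel recalled in the introduction integrates in $t$ (thanks to the $e^{-t/4}$ factor) to the elementary expression
\[
G(x,y)=\int_0^\infty H(x,y,t)\,dt=e^{\min(x,y)},
\]
which is indeed positive and $f$-harmonic off the diagonal, concretely illustrating the general construction.

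The main point to make rigorous is the Li-Tam criterion in the weighted setting: one exhausts $M$ by balls $B_o(i)$, solves the Dirichlet problem for the $f$-Green function $G_i(x,y)$ on each, and uses the barrier $u=e^{\beta f}$ together with the weighted maximum principle to show that $G_i(x,y)$ stays bounded as $i\to\infty$ for fixed $x\ne y$; the monotone limit is then the desired positive $f$-Green function. This is a direct transcription of the classical Li-Tam argument with $\Delta_f$ in place of $\Delta$, so the only genuinely geometric input is the sign of $\Delta_f(e^{\beta f})$ established above. The trivial case $C_0=0$ forces $f$ to be constant and reduces the claim to ordinary nonparabolicity of a Ricci-flat manifold, which must be treated separately and in fact fails in dimensions one and two.
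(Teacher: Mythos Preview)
Your argument is correct and in fact more streamlined than the paper's. The key computation
\[
\Delta_f(e^{\beta f})=\beta e^{\beta f}\bigl[(\beta-1)C_0-\beta R\bigr]<0\qquad(0<\beta<1,\ C_0>0,\ R\ge 0)
\]
is right, and the characterization ``$f$-nonparabolic $\Longleftrightarrow$ there exists a positive nonconstant $f$-superharmonic function'' is standard (it is slightly weaker in form than Lemma~\ref{criter} as stated, but is the usual equivalent formulation of parabolicity). Your barrier $e^{\beta f}$ works uniformly for every nontrivial steady soliton, so you avoid a case split.

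The paper proceeds differently: it uses the scalar curvature $R$ as the barrier. From $\Delta_f R=-2|\mathrm{Ric}|^2\le 0$ one sees $R$ is $f$-superharmonic, and the fact $\liminf R=0$ (from \cite{[PW],[FG],[Wu]}) gives the decay condition in Lemma~\ref{criter} directly. This handles the case $R>0$; when $R\equiv 0$ the paper invokes the isometric splitting $M\cong N^{n-k}\times\mathbb{R}^k$ into a Ricci-flat factor and a Gaussian steady soliton, then integrates the explicit Gaussian $f$-heat kernel (with its $e^{-t/4}$ decay) to produce the $f$-Green function by hand. Your approach buys uniformity and avoids both the $\liminf R=0$ input and the splitting/explicit-kernel detour; the paper's approach buys a concrete formula for $G$ in the flat case and uses only Lemma~\ref{criter} as stated. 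Your final remark on the trivial case $C_0=0$ is apt: the paper's proof also opens with ``Let $(M,g,f)$ be a nontrivial gradient steady soliton,'' so the low-dimensional Ricci-flat counterexamples are implicitly excluded there as well.
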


In \cite{[SWW]}, Song, Wei and Wu investigated several properties of
$f$-Green's functions on smooth metric measure spaces. Pigola, Rimoldi,
and Setti \cite{[PRG]} proved that gradient shrinking Ricci solitons are $f$-parabolic.\\

The paper is organized as follows. In Section \ref{sec2a}, we recall
comparison theorems for the Bakry-\'Emery Ricci curvature bounded
below, using which we derive a local $f$-volume doubling property, a
local $f$-Neumann Poincar\'e inequality, a local Sobolev inequality
and mean value inequalities for the $f$-heat equation. In Section
\ref{sec3v}, we prove a Moser's Harnack inequality of $f$-heat
equation following the arguments of Saloff-Coste in \cite{[Saloff1],
[Saloff2]}. In Section \ref{sec3}, we  prove local Gaussian upper
and lower bound estimates of the $f$-heat kernel. In Section
\ref{sec4}, following the same arguments of \cite{[WuWu]}, we
establish a new $L_f^1$-Liouville theorem for the $f$-harmonic
function and a new $L_f^1$-uniqueness property for nonnegative
solutions of the $f$-heat equation. In Section \ref{eigen}, we apply
upper bounds of the $f$-heat kernel to get the eigenvalue estimates
of the $f$-Laplacian on compact smooth metric measure spaces. In
Section \ref{Greenf}, we derive Green function estimates for smooth
metric measure spaces with $\mathrm{Ric}_f\geq 0$ and $f$ bounded,
and for gradient steady Ricci solitons.

\textbf{Acknowledgement}. The authors thank Professors Xiaodong Cao
and Laurent Saloff-Coste for their suggestions and great help. The
second author thanks Professors Xianzhe Dai and Guofang Wei for
helpful discussions, guidance, constant encouragement and support.
The first author is partially supported by NSFC (11101267, 11271132)
and the China Scholarship Council (201208310431). The second author
is partially supported by an AMS-Simons travel grant.

\section{Poincar\'e, Sobolev and mean value inequalities}
\label{sec2a}

Recall that for any point $p\in M$ and $R>0$, we denote
\[
A(R)=A(p,R)=\sup_{x\in B_p(3R)}|f(x)|,\quad A'(R)=A'(p,R)=\sup_{x\in
B_p(3R)}|\nabla f(x)|.
\]
When there is no confusion we write $A$, $A'$ for short. We start
from the relative $f$-volume comparison theorem of Wei and Wylie
\cite{[WW]}.
\begin{lemma}\label{comp}
Let $(M,g,e^{-f}dv)$ be an $n$-dimensional complete noncompact
smooth metric measure space. If $\mathrm{Ric}_f\geq-(n-1)K$ for some
constant $K\geq0$, then
\begin{equation}\label{volcomp}
\frac{V_f(B_x(R_1,R_2))}{V_f(B_x(r_1,r_2))}\leq
\frac{V^{n+4A}_K(B_x(R_1,R_2))}{V^{n+4A}_K(B_x(r_1,r_2))}
\end{equation}
for any $0<r_1<r_2,\ 0<R_1<R_2$, $r_1\leq R_1,\ r_2\leq R_2$, where
$B_x(R_1,R_2)=B_x(R_2)\backslash B_x(R_1)$, and
$A=A(x,\frac{1}{3}R_2)$. Here ${V^{n+4A}_K(B_x(r))}$ denotes the
volume of the ball in the model space $M^{n+4A}_K$, i.e., the simply
connected space form with constant sectional curvature $-K$ and
dimension $n+4A$.

Similarly we have
\begin{equation}\label{volcomp2}
\frac{V_f(B_x(R_1,R_2))}{V_f(B_x(r_1,r_2))}\leq
\frac{V^{n+4A'R_2}_K(B_x(R_1,R_2))}{V^{n+4A'R_2}_K(B_x(r_1,r_2))},
\end{equation}
where $A'=A'(x,\frac{1}{3}R_2)$.
\end{lemma}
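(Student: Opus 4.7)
The plan is to follow Wei-Wylie \cite{[WW]}: first establish a pointwise $f$-mean curvature comparison along minimizing geodesics from $x$, and then deduce the relative $f$-volume comparison by a Bishop-Gromov integration argument.

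Working in polar normal coordinates at $x$, along a minimizing radial geodesic $\gamma(r)$ let $m(r)=\Delta r$ and $m_f(r)=\Delta_f r = m(r)-\partial_r f$ denote the classical and weighted mean curvatures of the geodesic sphere at radius $r$. The Bochner formula applied to the distance function $r$ yields the Riccati inequality $m'(r)+m(r)^2/(n-1)\leq -\mathrm{Ric}(\partial_r,\partial_r)$; substituting $m=m_f+\partial_r f$ and using $\mathrm{Ric}_f=\mathrm{Ric}+\nabla^2 f$ transforms this into
$$m_f'(r)+\frac{(m_f(r)+\partial_r f)^2}{n-1}\leq -\mathrm{Ric}_f(\partial_r,\partial_r)\leq (n-1)K.$$

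The key step, and the main obstacle, is to upgrade this to a clean Riccati inequality for $m_f$ alone in effective dimension $n+4A$ (respectively $n+4A' R_2$). Using the elementary bound $(a+b)^2\geq a^2/(1+\alpha)-b^2/\alpha$ with the choice $\alpha=4A/(n-1)$ (respectively $\alpha=4A' R_2/(n-1)$), the inequality becomes
$$m_f'(r)+\frac{m_f(r)^2}{n+4A-1}\leq (n-1)K+\frac{(\partial_r f)^2}{4A}.$$
For the second comparison the pointwise bound $|\nabla f|\leq A'$ on $B_x(R_2)$ directly controls the error term $(\partial_r f)^2/(4A' R_2)$ uniformly, and a standard ODE/Riccati comparison yields $m_f(r)\leq m_K^{n+4A' R_2}(r)$, where $m_K^N$ denotes the mean curvature of the geodesic sphere of radius $r$ in the $N$-dimensional model space of constant sectional curvature $-K$. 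For the first comparison one instead controls $(\partial_r f)^2/(4A)$ by an integration-by-parts along the radial geodesic against the bound $|f|\leq A$, effectively comparing to a modified area element of the form $\mathcal{A}_f\cdot e^{cf}$ for a suitable constant $c=c(A)$; this absorbs the cross-term into a telescoping boundary contribution controlled by $A$, and yields $m_f(r)\leq m_K^{n+4A}(r)$.

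Once the mean curvature comparison is in hand, the $f$-area element $\mathcal{A}_f(r,\theta)=e^{-f}\mathcal{A}(r,\theta)$ satisfies $\partial_r\log\mathcal{A}_f(r,\theta)=m_f(r)$, so the ratio $\mathcal{A}_f(r,\theta)/\mathcal{A}_K^{n+4A}(r)$ is non-increasing in $r$ along every radial direction $\theta$ (stopping at the cut locus in the usual way). Integrating this monotonicity first over the sphere of directions at $x$ and then over the radial intervals $[r_1,r_2]$ versus $[R_1,R_2]$ (via the classical Gromov averaging lemma that the ratio of two integrals of such monotone ratios is itself monotone in the intervals) yields \eqref{volcomp}; the identical argument with $n+4A' R_2$ in place of $n+4A$ yields \eqref{volcomp2}. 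The hard part is the sharp quantitative identification of the dimension increment as exactly $4A$ (or $4A' R_2$) — this requires Wei-Wylie's delicate choice of parameters in the Cauchy-Schwarz step together with the sharp use of the hypotheses on $f$; everything downstream is a routine Bishop-Gromov integration.
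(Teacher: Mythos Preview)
Your overall architecture is right --- get a weighted mean curvature comparison $m_f(r)\le m_K^{N}(r)$ for an effective dimension $N$, then run Bishop--Gromov on the monotone ratio $\mathcal{A}_f/\mathcal{A}_K^{N}$ and integrate over annuli --- and the second half of your write-up (monotonicity of the area ratio, Gromov's averaging lemma, integration over $[r_1,r_2]$ vs.\ $[R_1,R_2]$) matches the paper exactly. The gap is in the first half, namely how you obtain $m_f(r)\le m_K^{n+4A}(r)$.

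Your route is to apply Cauchy--Schwarz in the Riccati inequality to get
\[
m_f' + \frac{m_f^2}{n+4A-1}\;\le\;(n-1)K + \frac{(\partial_r f)^2}{4A},
\]
and then to compare with the model, which satisfies $(m_K^{N})' + (m_K^{N})^2/(N-1)=(N-1)K$ with $N=n+4A$. For this comparison to close you would need $(\partial_r f)^2/(4A)\le 4AK$, i.e.\ $|\partial_r f|\le 4A\sqrt{K}$, which is certainly not implied by $|f|\le A$ (take $K=0$, or let $f$ oscillate). The same obstruction appears in your second comparison: with $N=n+4A'R_2$ the error term is $(\partial_r f)^2/(4A'R_2)\le A'/(4R_2)$, and you would need this to be $\le 4A'R_2K$, i.e.\ $16KR_2^2\ge 1$, which fails for small $K$ or small $R_2$. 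Your suggestion to repair the first case by ``integration by parts against $|f|\le A$'' and ``a modified area element $\mathcal{A}_f\cdot e^{cf}$'' is not a working mechanism here: the quadratic term $(\partial_r f)^2$ produced by the Cauchy--Schwarz split cannot be telescoped back to boundary values of $f$.

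The paper proceeds differently and avoids this issue entirely. It quotes Wei--Wylie's integral form of the comparison: starting from the unweighted Riccati inequality for $m(r)$ with right-hand side $(n-1)K+f''(r)$ (since $\mathrm{Ric}\ge -(n-1)K-\nabla^2 f$), one multiplies by $\sinh^2(\sqrt{K}\,r)$ and integrates, which after two integrations by parts gives
\[
m_f(r)\le (n-1)\sqrt{K}\coth(\sqrt{K}\,r)+\frac{2K}{\sinh^2(\sqrt{K}\,r)}\int_0^r\bigl(f(t)-f(r)\bigr)\cosh(2\sqrt{K}\,t)\,dt.
\]
The point is that this formula involves the \emph{values} $f(t)-f(r)$, not $(\partial_r f)^2$. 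Bounding $|f(t)-f(r)|\le 2A$ and computing the integral exactly yields the extra $4A\sqrt{K}\coth(\sqrt{K}\,r)$, hence $m_f(r)\le (n-1+4A)\sqrt{K}\coth(\sqrt{K}\,r)=m_K^{n+4A}(r)$. For \eqref{volcomp2} one instead uses $|f(t)-f(r)|\le A'\,|t-r|\le A'r$, giving the increment $4A'r\le 4A'R_2$. After this, your Bishop--Gromov argument goes through verbatim.
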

\begin{remark} \label{replace}
Following the proofs, $A(R)$ in all following lemmas, propositions,
theorems and corollaries can be replaced by $RA'(R)$. We will apply
the first volume comparison \eqref{volcomp} to derive heat kernel
upper bound, and the second volume comparison \eqref{volcomp2} to
derive Harnack inequality and heat kernel lower bound.
\end{remark}

\begin{proof}[Proof of Lemma \ref{comp}]
Applying the weighted Bochner formula (\ref{weightedBochner}) and an
ODE argument, Wei and Wylie (see (3.19) in \cite{[WW]}) proved the
following $f$-mean curvature comparison theorem. Recall that the
weighted mean curvature $m_f(r)$ is defined as
\[
m_f(r)=m(r)-\nabla f\cdot\nabla r=\Delta_f\ r.
\]
If $Ric_f\geq-(n-1)K$, then
\begin{equation}
\begin{split}
m_f(r)&\leq(n-1)\sqrt{K}\coth(\sqrt{K}\,r)+\frac{2K}{\sinh^2(\sqrt{K}\,r)}\int^r_0(f(t)-f(r))\cosh(2\sqrt{K}\,t)dt\\
&\leq(n-1+4A)\sqrt{K}\cdot\coth(\sqrt{K}\,r)
\end{split}
\end{equation}
along any minimal geodesic segment from $x$. In geodesic polar
coordinates, the volume element is written as
\[
dv=\mathcal{A}(r,\theta)dr\wedge d\theta_{n-1},
\]
where $d\theta_{n-1}$ is the standard volume element of the unit
sphere $S^{n-1}$. Let
\[
\mathcal{A}_f(r,\theta)=e^{-f}\mathcal{A}(r,\theta).
\]
By the first variation of the area,
\[
\frac{\mathcal{A'}}{\mathcal{A}}(r,\theta)=(\ln(\mathcal{A}(r,\theta)))'=m(r,\theta).
\]
Therefore
\[
\frac{\mathcal{A'}_f}{\mathcal{A}_f}(r,\theta)=(\ln(\mathcal{A}_f(r,\theta)))'=m_f(r,\theta),
\]

So for $r<R$,
\[
\frac{\mathcal{A}_f(R,\theta)}{\mathcal{A}_f(r,\theta)}\leq
\frac{\mathcal{A}^{n+4A}_K(R)}{\mathcal{A}^{n+4A}_K(r)},
\]
That is $\frac{\mathcal{A}_f(r,\theta)}{\mathcal{A}^{n+4A}_K(r)}$ is
nonincreasing in $r$, where $\mathcal{A}^{n+4A}_K(r)$ is the volume
element in the simply connected hyperbolic space of constant
sectional curvature $-K$ and dimension $n+4A$. Applying Lemma 3.2 in
\cite{[Zhu]}, we get
\[
\frac{\int^{R_2}_{R_1}\mathcal{A}_f(R,\theta)dt}{\int^{r_2}_{r_1}\mathcal{A}_f(r,\theta)dt}\leq
\frac{\int^{R_2}_{R_1}\mathcal{A}^{n+4A}_K(R,\theta)dt}{\int^{r_2}_{r_1}\mathcal{A}^{n+4A}_K(r,\theta)dt}
\]
for $0<r_1<r_2$, $0<R_1<R_2$, $r_1\leq R_1$ and $r_2\leq R_2$.
Integrating along the sphere direction proves \eqref{volcomp}.

The second volume comparison (\ref{volcomp2}) follows from an
observation for the weighted mean curvature,
\begin{equation}
\begin{split}
m_f(r)&\leq(n-1)\sqrt{K}\coth(\sqrt{K}\,r)+\frac{2K}{\sinh^2(\sqrt{K}\,r)}\int^r_0(f(t)-f(r))\cosh(2\sqrt{K}\,t)dt\\
&\leq(n-1+4A'r)\sqrt{K}\cdot\coth(\sqrt{K}\,r).
\end{split}
\end{equation}
\end{proof}

Let $V^{n+4A}_K(B_x(r))$ be the volume of the ball of radius $r$ in
the simply connected hyperbolic space of constant sectional
curvature $-K$ and dimension $n+4A$. If $K>0$, the model space is
the hyperbolic space. If $K=0$, the model space is the Euclidean
space. In any case, we have the estimate
\begin{equation}\label{props1}
\omega_{n+4A}\cdot r^{n+4A}\leq V_K(B_x(r))\leq \omega_{n+4A}\cdot
r^{n+4A}e^{(n-1+4A)\sqrt{K}\,r}
\end{equation}
where $\omega_{n+4A}$ is the volume of the unit
ball in $(n+4A)$-dimensional Euclidean space.\\

Similar to \cite{[WuWu]}, Lemma \ref{comp} implies a local
$f$-volume doubling property. Indeed, in \eqref{volcomp}, letting
$r_1=R_1=0$, $r_2=r$ and $R_2=2r$, from \eqref{props1} we get
\begin{equation}
\begin{aligned}\label{voldop}
V_f(B_x(2r))\leq 2^{n+4A}e^{2(n-1+4A)\sqrt{K}\,r}\cdot V_f(B_x(r))
\end{aligned}
\end{equation}
This local $f$-volume doubling property is crucial in our proof of
Poincar\'e inequality, Sobolev inequality, mean-value inequality, and Harnack inequality.\\

From Lemma \ref{comp}, we also have the following,
\begin{lemma}\label{lecomp1}
Let $(M,g,e^{-f}dv)$ be an $n$-dimensional complete noncompact
smooth metric measure space. If $\mathrm{Ric}_f\geq-(n-1)K$ for some
constant $K>0$, then
\[
V_f(B_x(r))\leq
\frac{e^{(n-1+4A)\sqrt{K}(d(x,y)+r)}}{r^{n+4A}}V_f(B_y(r)),
\]
where $A=A(y,d(x,y)+r)$.
\end{lemma}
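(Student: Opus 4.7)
The plan is to deduce Lemma \ref{lecomp1} directly from the relative $f$-volume comparison \eqref{volcomp} of Lemma \ref{comp}, combined with the model-space volume bound \eqref{props1}. Writing $D := d(x,y)$, the triangle inequality gives the inclusion $B_x(r) \subseteq B_y(D+r)$, hence $V_f(B_x(r)) \le V_f(B_y(D+r))$. It therefore suffices to bound the ratio $V_f(B_y(D+r))/V_f(B_y(r))$ in terms of an explicit quantity depending only on $n$, $K$, $A$, $D$, and $r$.

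For this I would apply \eqref{volcomp} centered at $y$ with the degenerate annuli $(r_1,r_2)=(0,r)$ and $(R_1,R_2)=(0,D+r)$, which yields
\[
\frac{V_f(B_y(D+r))}{V_f(B_y(r))} \;\le\; \frac{V_K^{n+4A}(B_y(D+r))}{V_K^{n+4A}(B_y(r))}.
\]
The parameter that Lemma \ref{comp} actually requires is $A(y,(D+r)/3) = \sup_{B_y(D+r)}|f|$, and this is no larger than $A(y,d(x,y)+r) = \sup_{B_y(3(d(x,y)+r))}|f|$, the quantity fixed in the statement of Lemma \ref{lecomp1}; so the comparison remains valid with the stated $A$. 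Inserting the model-space bounds from \eqref{props1}, namely the upper bound $V_K^{n+4A}(B_y(D+r)) \le \omega_{n+4A}(D+r)^{n+4A} e^{(n-1+4A)\sqrt{K}(D+r)}$ and the lower bound $V_K^{n+4A}(B_y(r)) \ge \omega_{n+4A} r^{n+4A}$, the factors $\omega_{n+4A}$ cancel and one arrives at the claimed inequality, the exponential arising from the upper bound in the numerator and the $r^{-(n+4A)}$ from the lower bound in the denominator.

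There is no genuine obstacle here; the argument is pure bookkeeping once one chooses the correct comparison radii. The only subtlety is monitoring the parameter $A$: the $f$-mean curvature comparison behind \eqref{volcomp} is carried out along minimal geodesics emanating from $y$, so one needs $A$ to control $|f|$ throughout the region swept out by these geodesics up to radius $D+r$, which is exactly why the statement of Lemma \ref{lecomp1} takes $A = A(y,d(x,y)+r)$ rather than a quantity depending only on $r$.
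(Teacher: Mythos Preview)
Your argument is clean but has a quantitative gap: when you insert the model-space bounds from \eqref{props1}, the numerator gives
\[
V_K^{n+4A}(B_y(D+r)) \;\le\; \omega_{n+4A}\,(D+r)^{n+4A}\,e^{(n-1+4A)\sqrt{K}(D+r)},
\]
so after dividing by $\omega_{n+4A}\,r^{n+4A}$ you obtain
\[
\frac{V_f(B_y(D+r))}{V_f(B_y(r))} \;\le\; \left(\frac{D+r}{r}\right)^{n+4A} e^{(n-1+4A)\sqrt{K}(D+r)},
\]
not the stated bound. The factor $(D+r)^{n+4A}$ does not cancel, contrary to what you claim. This extra polynomial in $D+r$ is harmless for the downstream applications in the paper (it is dominated by the exponential and can be absorbed into constants), but it is not the inequality as written.

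The paper's own proof proceeds differently: rather than the crude inclusion $B_x(r)\subset B_y(D+r)$, it uses the tighter annulus inclusion
\[
B_x(r)\;\subset\; B_y(D+r)\setminus B_y(D-r),
\]
and then applies \eqref{volcomp} with $(R_1,R_2)=(D-r,D+r)$ and $(r_1,r_2)=(0,r)$. The point is that the model annulus $V_K^{n+4A}(B(D-r,D+r))$ has width $2r$, so its volume is controlled by the exponential $e^{(n-1+4A)\sqrt{K}(D+r)}$ without picking up the full $(D+r)^{n+4A}$ polynomial that the whole ball carries in \eqref{props1}. That is what your approach loses by replacing the annulus with the entire ball $B_y(D+r)$.
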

\begin{proof}
We let $r_1=0$, $r_2=r$, $R_1=d(x,y)-r$ and $R_2=d(x,y)+r$ in Lemma
\ref{comp}. Then using \eqref{props1} we have
\[
\frac{V_f(B_y(d(x,y)+r))-V_f(B_y(d(x,y)-r))}{V_f(B_y(r))}
\leq\frac{e^{(n-1+4A)\sqrt{K}(d(x,y)+r)}}{r^{n+4A}}.
\]
Therefore we get
\begin{equation*}
\begin{aligned}
V_f(B_x(r))&\leq V_f(B_y(d(x,y)+r))-V_f(B_y(d(x,y)-r))\\
&\leq \frac{e^{(n-1+4A)\sqrt{K}(d(x,y)+r)}}{r^{n+4A}}V_f(B_y(r)).
\end{aligned}
\end{equation*}
\end{proof}

Following the argument of \cite{[Bus]} (see also \cite{[Saloff2]} or
\cite{[MuWa]}), applying Lemma \ref{comp} we get a local Neumann
Poincar\'e inequality on complete smooth metric measure spaces.
\begin{lemma}\label{NeuPoin}
Let $(M,g,e^{-f}dv)$ be an $n$-dimensional complete noncompact
smooth metric measure space with $\mathrm{Ric}_f\geq-(n-1)K$ for
some constant $K\geq0$. Then,
\begin{equation}\label{Nepoinineq}
\int_{B_x(r)}|\varphi-\varphi_{B_x(r)}|^2d\mu\leq
c_1e^{c_2A+c_3(1+A)\sqrt{K}r}\cdot r^2\int_{B_x(r)}|\nabla
\varphi|^2d\mu
\end{equation}
for any $\varphi\in C^\infty(B_x(r))$, where
$\varphi_{B_x(r)}=\int_{B_x(r)}\varphi d\mu/\int_{B_x(r)}d\mu$.
\end{lemma}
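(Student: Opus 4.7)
The plan is to follow Buser's geometric approach, as adapted to the weighted setting by Saloff-Coste \cite{[Saloff2]} and Munteanu-Wang \cite{[MuWa]}. The first step is to rewrite the variance on the left-hand side as a double integral,
\begin{equation*}
\int_{B_x(r)}|\varphi-\varphi_{B_x(r)}|^2\,d\mu \;=\; \frac{1}{2V_f(B_x(r))}\int_{B_x(r)}\!\!\int_{B_x(r)}|\varphi(y)-\varphi(z)|^2\,d\mu(y)\,d\mu(z),
\end{equation*}
and, for each ordered pair $(y,z)$, to fix a minimizing geodesic $\gamma_{y,z}:[0,d(y,z)]\to M$ joining them. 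Cauchy-Schwarz along $\gamma_{y,z}$ yields $|\varphi(y)-\varphi(z)|^2 \leq d(y,z)\int_0^{d(y,z)}|\nabla\varphi|^2(\gamma_{y,z}(s))\,ds \leq 2r\int_0^{d(y,z)}|\nabla\varphi|^2(\gamma_{y,z}(s))\,ds$.

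The main technical step is a weighted segment inequality: for every nonnegative Borel function $h$ on $B_x(2r)$,
\begin{equation*}
\int_{B_x(r)\times B_x(r)}\int_0^{d(y,z)} h(\gamma_{y,z}(s))\,ds\,d\mu(y)\,d\mu(z) \;\leq\; C\,r\,V_f(B_x(r))\int_{B_x(2r)} h\,d\mu,
\end{equation*}
with $C \leq c\,e^{c_2 A + c_3(1+A)\sqrt{K}r}$. I would prove this exactly as in Buser's Riemannian argument, by splitting $\gamma_{y,z}$ at a chosen intermediate point and changing variables from $(y,z)$ to $(p,v)$ where $p=\gamma_{y,z}(s)$ and $v$ encodes an initial direction; the Jacobian is controlled by the $f$-mean curvature comparison underlying Lemma \ref{comp}, so the $f$-Jacobian ratio along a geodesic of length $\ell \leq 2r$ is dominated by the model Jacobian of formal dimension $n+4A$, i.e.\ by $(\ell/s)^{n-1+4A} e^{(n-1+4A)\sqrt{K}\ell}$. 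Integrating and using the local doubling \eqref{voldop} to absorb the ratio $V_f(B_x(2r))/V_f(B_x(r))$ produces the claimed exponential constant.

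Combining the two steps with $h=|\nabla\varphi|^2$ already gives a weak Poincar\'e inequality in which the gradient is integrated over the larger ball $B_x(2r)$. To upgrade to the same-ball form \eqref{Nepoinineq}, I would then apply the standard Whitney-type chaining argument of Jerison (also used in \cite{[Saloff2]}), which requires only the local $f$-doubling property \eqref{voldop} and the weak Poincar\'e inequality just established. Each chaining step contributes a factor of the same exponential form in $A$ and $\sqrt{K}r$, so the total constant remains in the class $c_1 e^{c_2 A + c_3(1+A)\sqrt{K}r}$.

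The main obstacle is the bookkeeping of how $A$ enters. The weight $e^{-f}$ appears in two conceptually different ways: through the effective dimension $n+4A$ in the comparison of Lemma \ref{comp} (yielding polynomial factors in $\ell/s$ with exponent linear in $A$, and hyperbolic factors $e^{(n-1+4A)\sqrt{K}\ell}$), and through direct multiplicative factors $e^{\pm A}$ when one exchanges $d\mu$ with $dv$ in the Jacobian computation or compares $f$-volumes of concentric balls. The task is to verify that the accumulated constant after the segment inequality and the chaining still fits the stated form $c_1 e^{c_2 A + c_3(1+A)\sqrt{K}r}$; Remark \ref{replace}, which permits replacing $A$ by $r A'$, provides a useful consistency check along the way.
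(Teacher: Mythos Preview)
Your proposal is correct and matches the approach the paper indicates: the paper does not actually give a detailed proof of Lemma~\ref{NeuPoin}, but simply says ``Following the argument of \cite{[Bus]} (see also \cite{[Saloff2]} or \cite{[MuWa]}), applying Lemma~\ref{comp} we get\ldots''. Your outline---rewriting the variance as a double integral, Buser's segment inequality with the $f$-Jacobian comparison coming from the $f$-mean curvature estimate behind Lemma~\ref{comp}, then the Whitney/Jerison chaining from \cite{[Saloff2]} to pass from the weak to the strong form---is precisely a fleshing-out of that reference, and your tracking of the constant in the form $c_1 e^{c_2 A + c_3(1+A)\sqrt{K}r}$ is consistent with how the paper uses \eqref{voldop} and \eqref{props1}.
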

\begin{remark}
By Remark \ref{replace}, the coefficient $c_2A+c_3(1+A)\sqrt{K}r$ in
Lemma \ref{NeuPoin} and all following lemmas, propositions,
theorems, and corollaries, can be replaced by
$c_2(A'+\sqrt{K})r+c_3A'\sqrt{K}r^2$.
\end{remark}

Combining Lemma \ref{comp} and Lemma \ref{NeuPoin} and the argument
of \cite{[HK]} (see also \cite{[WuWu]}), we obtain a local Sobolev
inequality on smooth metric measure spaces.
\begin{lemma}\label{NeuSob}
Let $(M,g,e^{-f}dv)$ be an $n$-dimensional complete noncompact
smooth metric measure space with $\mathrm{Ric}_f\geq-(n-1)K$ for
some constant $K\geq0$. Then there exists $\nu>2$, such that
\begin{equation}\label{loSob}
\left(\int_{B_x(r)}|\varphi|^{\frac{2\nu}{\nu-2}}d\mu
\right)^{\frac{\nu-2}{\nu}}\leq
\frac{c_1e^{c_2A+c_3(1+A)\sqrt{K}r}\cdot r^2}{V_f(B_x(r))^{\frac
2\nu}} \int_{B_x(r)}(|\nabla \varphi|^2+r^{-2}|\varphi|^2)d\mu
\end{equation}
for any $\varphi\in C^\infty(B_x(r))$.
\end{lemma}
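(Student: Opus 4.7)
The plan is to derive \eqref{loSob} from the local $f$-volume doubling property \eqref{voldop} together with the local Neumann Poincar\'e inequality of Lemma \ref{NeuPoin}, following the Heinonen--Koskela / Saloff-Coste scheme \cite{[HK]}: on a doubling metric measure space supporting a weak $(2,2)$-Poincar\'e inequality one automatically has a $(q,2)$-Sobolev--Poincar\'e inequality with Sobolev exponent $q=2\nu/(\nu-2)$ determined by the doubling dimension. I would carry this out in three steps.

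\textbf{Step 1 (doubling with an explicit exponent).} Iterating \eqref{voldop} on sub-balls $B_y(s)\subset B_x(r)$ (with $y\in B_x(r)$ and $0<s\leq R\leq r$) yields
\[
\frac{V_f(B_y(R))}{V_f(B_y(s))}\leq c_1 e^{c_2A+c_3(1+A)\sqrt{K}r}\left(\frac{R}{s}\right)^{\nu}
\]
for some $\nu>2$ depending only on $n$ and $A$ (e.g.\ $\nu$ slightly larger than $n+4A$). This provides both the Sobolev exponent and the geometric hypothesis of the covering argument.

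\textbf{Step 2 (Sobolev--Poincar\'e on $B_x(r)$ and removal of the mean).} Lemma \ref{NeuPoin} applied to every sub-ball of $B_x(r)$ gives a weak $(2,2)$-Poincar\'e inequality with a uniformly controlled constant $c_1 e^{c_2A+c_3(1+A)\sqrt{K}r}\cdot s^2$. A Whitney-type telescoping argument combining Step 1 with this Poincar\'e inequality (see \cite{[HK]}, or the presentation in \cite{[WuWu]}) produces
\[
\left(\frac{1}{V_f(B_x(r))}\int_{B_x(r)}|\varphi-\varphi_{B_x(r)}|^{q}d\mu\right)^{1/q}\leq C\,r \left(\frac{1}{V_f(B_x(r))}\int_{B_x(r)}|\nabla\varphi|^2\, d\mu\right)^{1/2},
\]
with $C\leq c_1 e^{c_2A+c_3(1+A)\sqrt{K}r}$. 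To pass to \eqref{loSob}, I split $\varphi=(\varphi-\varphi_{B_x(r)})+\varphi_{B_x(r)}$ and use the $L^q$ triangle inequality, together with the Cauchy--Schwarz estimate $|\varphi_{B_x(r)}|\,V_f(B_x(r))^{1/2}\leq\|\varphi\|_{L^2(B_x(r))}$ for the mean. Multiplying through by $V_f(B_x(r))^{2/\nu}$ and using $q=2\nu/(\nu-2)$, the mean contributes exactly the additional $r^{-2}|\varphi|^2$ term on the right-hand side of \eqref{loSob}.

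\textbf{Main obstacle.} The delicate point is tracking the constant through Step 2: in the telescoping sum one must verify that the exponential prefactor $e^{c_2A+c_3(1+A)\sqrt{K}r}$ arising from both the doubling constant and the Poincar\'e constant is only multiplied once (rather than accumulating geometrically over the chain), and that the chain length depends only on $\nu$, hence ultimately only on $n$ and $A$. Because the doubling property is applied only on scales comparable to $r$ and Whitney chains inside $B_x(r)$ have bounded length in terms of $\nu$, the constants combine in the claimed functional form, giving precisely the prefactor displayed in \eqref{loSob}.
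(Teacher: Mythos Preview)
Your proposal is correct and follows essentially the same route as the paper: the paper itself proves Lemma~\ref{NeuSob} in one sentence, simply stating that it follows by combining the $f$-volume comparison (Lemma~\ref{comp}, hence the doubling \eqref{voldop}) with the Neumann Poincar\'e inequality of Lemma~\ref{NeuPoin} via the Haj{\l}asz--Koskela argument \cite{[HK]} (see also \cite{[WuWu]}). Your Steps~1--2 are precisely a fleshed-out version of that citation, and your tracking of the prefactor $e^{c_2A+c_3(1+A)\sqrt{K}r}$ through the Whitney chain (bounded length depending only on $\nu$) and the removal of the mean is exactly what the paper leaves implicit.
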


Applying Lemma \ref{NeuSob} we obtain a mean value inequality for
solutions to the $f$-heat equation, which is similar to Theorem
5.2.9 in \cite{[Saloff2]} (see also \cite{[WuWu]}).
\begin{proposition}\label{PoinDouHarn}
Let $(M,g,e^{-f}dv)$ be an $n$-dimensional complete noncompact
smooth metric measure space. Assume \eqref{loSob} holds. Fix
$0<p<\infty$. There exist constants $c_1(n,p,\nu)$, $c_2(n,p,\nu)$
and $c_3(n,p,\nu)$ such that for any $s\in\mathbb{R}$ and
$0<\delta<1$, any smooth positive subsolution $u$ of the $f$-heat
equation in the cylinder $Q=B_x(r)\times(s-r^2,s)$ satisfies
\[
\sup_{Q_\delta}\{u^p\}\leq
\frac{c_1e^{c_2A+c_3(1+A)\sqrt{K}r}}{(1-\delta)^{2+\nu}\,
r^2\,V_f(B_x(r))}\cdot\int_Qu^p\,\,\, d\mu\, dt,
\]
where $Q_\delta=B_x(\delta r)\times (s-\delta r^2,s)$.
\end{proposition}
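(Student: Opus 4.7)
The plan is to carry out a standard parabolic Moser iteration, adapted to the weighted setting, using Lemma \ref{NeuSob} as the key functional inequality. The main technical point is to keep the weighted Sobolev constant $e^{c_2A+c_3(1+A)\sqrt{K}r}$ factored out cleanly so that it appears only linearly in the final bound.

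First I would derive a weighted Caccioppoli-type energy estimate. Given a positive smooth subsolution $u$ of $(\partial_t-\Delta_f)u\le 0$ in $Q$, multiply by $u^{p-1}\phi^2$, where $\phi(x,t)$ is a smooth cutoff supported in $Q$. Integrating against $d\mu\,dt$ and using that $\Delta_f$ is self-adjoint with respect to $e^{-f}dv$, integration by parts produces, for $p\ge 1$,
\[
\frac{1}{p}\partial_t\!\!\int u^p\phi^2 d\mu + \frac{4(p-1)}{p^2}\!\int|\nabla u^{p/2}|^2\phi^2 d\mu \le C\!\int u^p|\nabla\phi|^2 d\mu + \frac{2}{p}\!\int u^p\phi\,\phi_t\, d\mu.
\]
(For $0<p<1$ one first applies the iteration to $u^{p_0}$ with $p_0$ large and then uses a standard $L^p$-$L^{p_0}$ trick, or applies Young's inequality more carefully, to handle the sign of $p-1$.) This yields
\[
\sup_{t}\int u^p\phi^2 d\mu + \int\!\int|\nabla(u^{p/2}\phi)|^2 d\mu\,dt \le C\!\!\int\!\!\!\int u^p(|\nabla\phi|^2+|\phi\phi_t|)\,d\mu\,dt.
\]

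Next I would feed this into Lemma \ref{NeuSob} applied to $v=u^{p/2}\phi$ at each time slice. Combining the spatial Sobolev with the $L^\infty_t L^p_x$ bound from the energy estimate via the standard interpolation $\|v\|_{L^{2(1+2/\nu)}_{t,x}}^{2(1+2/\nu)}\le (\sup_t\|v\|_2^2)^{2/\nu}\cdot\|v\|_{L^2_tL^{2\nu/(\nu-2)}_x}^2$ produces
\[
\left(\int\!\!\!\int u^{p(1+2/\nu)}\phi^{2(1+2/\nu)}d\mu\,dt\right)^{\!\!\nu/(\nu+2)} \le \frac{C\,e^{c_2A+c_3(1+A)\sqrt{K}r}r^2}{V_f(B_x(r))^{2/\nu}}\!\int\!\!\!\int u^p(|\nabla\phi|^2+|\phi\phi_t|+\phi^2r^{-2})d\mu\,dt.
\]

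Then I would iterate in the classical Moser way. Set $\theta=1+2/\nu>1$, $p_k=p\,\theta^k$, and choose radii $r_k=(\delta+(1-\delta)2^{-k})r$ with cutoffs $\phi_k$ supported in $Q_{r_k}:=B_x(r_k)\times(s-r_k^2,s)$, equal to $1$ on $Q_{r_{k+1}}$, and satisfying $|\nabla\phi_k|^2+|\partial_t\phi_k|\le C\,4^k/((1-\delta)r)^2$. Denoting $I_k:=\iint_{Q_{r_k}}u^{p_k}d\mu\,dt$, the previous inequality reads
\[
I_{k+1}^{1/p_{k+1}} \le \left(\frac{C\,e^{c_2A+c_3(1+A)\sqrt{K}r}\,4^k}{(1-\delta)^2r^2\,V_f(B_x(r))^{2/\nu}}\right)^{1/p_k}I_k^{1/p_k},
\]
and the crucial observation is that $\sum_k 1/p_k=\nu/(2p)$ converges, so the product of the prefactors telescopes into a single constant $c_1\,e^{c_2A+c_3(1+A)\sqrt{K}r}/((1-\delta)^{2+\nu}r^2V_f(B_x(r)))$ after absorbing $\sum k/p_k<\infty$. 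Passing $k\to\infty$ yields $\sup_{Q_\delta}u^p$ on the left by the standard $L^{p_k}\to L^\infty$ argument, giving the claimed inequality.

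The main obstacle, as in all such arguments, is bookkeeping: one must verify that every appearance of the weighted Sobolev constant $e^{c_2A+c_3(1+A)\sqrt{K}r}$ enters only through one Sobolev step per iteration and that the product $\prod_k (e^{c_2A+c_3(1+A)\sqrt{K}r})^{1/p_k}$ again collapses to $e^{c_2A+c_3(1+A)\sqrt{K}r}$ after renaming constants (which works because $\sum 1/p_k$ is finite and $p$-dependent only). The reduction from $p\ge 1$ to arbitrary $p>0$ follows the usual device of running the iteration for an exponent $p_0>1$ and then using a Young-type inequality $\sup u^{p_0}\le\epsilon\sup u^{p_0}+C(\epsilon,p_0/p)\,\|u\|_p$-style interpolation between two nested cylinders to descend, which is why only the $(1-\delta)^{-(2+\nu)}$ factor survives.
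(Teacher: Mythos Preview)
Your proposal is correct and follows essentially the same approach as the paper. The paper does not spell out a proof of this proposition, instead citing Theorem~5.2.9 of Saloff-Coste's book \cite{[Saloff2]} and \cite{[WuWu]}; the detailed proof of the companion Proposition~\ref{PoinDou2} that \emph{is} given in the paper uses precisely the parabolic Moser iteration you describe (energy/Caccioppoli estimate, then H\"older combined with Lemma~\ref{NeuSob} to get the $\theta=1+2/\nu$ self-improvement, then geometric iteration over nested cylinders), so your outline matches the intended argument. One minor arithmetic slip: $\sum_{k\ge 0}1/p_k=(\nu+2)/(2p)$, not $\nu/(2p)$, but this only changes the renamed constants and does not affect the structure of the proof; likewise your handling of $0<p<1$ via the $L^p$--$L^{p_0}$ interpolation/absorption trick between nested cylinders is the standard route and is consistent with the paper's references.
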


Similar to Proposition \ref{PoinDouHarn}, we have
\begin{proposition}\label{PoinDou2}
Let $(M,g,e^{-f}dv)$ be an $n$-dimensional complete noncompact
smooth metric measure space. Assume \eqref{loSob} holds. Fix
$0<p_0<1+\nu/2$. There exist constants $c_1(n,p_0,\nu)$,
$c_2(n,p_0,\nu)$ and $c_3(n,p_0,\nu)$ such that for any
$s\in\mathbb{R}$, $0<\delta<1$, and $0<p\leq p_0$, any smooth
positive supersolution $u$ of the $f$-heat equation in the cylinder
$Q=B_x(r)\times(s-r^2,s)$ satisfies
\[
{\|u\|^p}_{p_0,{Q^{'}_{\delta}}}\leq
\left\{\frac{c_1e^{c_2A+c_3(1+A)\sqrt{K}r}}{(1-\delta)^{2+\nu}\,
r^2\,V_f(B_x(r))}\right\}^{1-p/p_0} \cdot{\|u\|^p}_{p,Q},
\]
where ${Q'}_\delta:=B_x(\delta r)\times (s-r^2,s-(1-\delta)r^2)$. On
the other hand, for any $0<p<\bar{p}<\infty$, there exist constants
$c_4(n,\bar{p},\nu)$, $c_5(n,\bar{p},\nu)$ and $c_6(n,\bar{p},\nu)$
such that
\[
\sup_{Q_\delta}\{u^{-p}\}\leq
\frac{c_4e^{c_5A+c_6(1+A)\sqrt{K}r}}{(1-\delta)^{2+\nu}\,
r^2\,V_f(B_x(r))}\cdot{\|u^{-1}\|^p}_{p,Q}\,\,\, ,
\]
where $||u||_{p,Q}=\left(\int_Q|u(x,t)|^pd\mu dt\right)^{1/p}.$
\end{proposition}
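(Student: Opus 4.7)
The plan is to carry out a Moser iteration for positive supersolutions of the $f$-heat equation, in direct parallel with the subsolution iteration that produced Proposition \ref{PoinDouHarn}, using the weighted Sobolev inequality of Lemma \ref{NeuSob} as the key analytic input. What is genuinely new here is only the careful bookkeeping of the $f$-dependent factor $e^{c_2A+c_3(1+A)\sqrt{K}r}$ that rides along from Lemma \ref{NeuSob}; the underlying iteration is the classical one, with $d\mu=e^{-f}dv$ replacing the Riemannian measure throughout.

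First I would derive the Caccioppoli-type energy inequality. Testing $\partial_t u-\Delta_f u\geq 0$ against the nonnegative function $\phi^2 u^{p-1}$ (recall $u>0$) and integrating by parts with respect to $d\mu$ yields, with $v=u^{p/2}$,
\[
\frac{4(p-1)}{p^2}\int\phi^2|\nabla v|^2\,d\mu + \frac{1}{p}\,\partial_t\!\!\int\phi^2 v^2\,d\mu \leq C\int\bigl(|\nabla\phi|^2+\phi|\partial_t\phi|\bigr)v^2\,d\mu
\]
after the standard Young-inequality absorption of the cross term. The sign of $(p-1)/p^2$ and of $1/p$ splits the analysis into regimes: for $p<0$ both signs are favorable (coercive gradient term together with a time term controlling $\sup$), which is the regime used for the sup bound on $u^{-p}$ via $v=u^{-q/2}$; the range $0<p<1$ gives the ``weak'' regime responsible for the first estimate. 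Combining with Lemma \ref{NeuSob} applied to $\phi v$, and integrating in time, yields the one-step reverse-H\"older estimate
\[
\|u\|_{p\theta,\,Q''}\leq \biggl[\frac{c\,e^{c'A+c''(1+A)\sqrt{K}r}}{(1-\sigma)^{2+\nu}\,r^2\,V_f(B_x(r))}\biggr]^{1/p}\|u\|_{p,\,Q'}
\]
on nested subcylinders $Q''\subset Q'$ parameterized by $\sigma\in(0,1)$, with $\theta=\nu/(\nu-2)>1$.

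Next I would iterate. For the first inequality, fix a geometrically shrinking sequence of cylinders converging to $Q'_\delta$ and exponents $p_k=p\theta^k$; the hypothesis $p_0<1+\nu/2$ is precisely what allows the iteration to terminate before $p_k$ would cross the forbidden value $1$, at which point a single H\"older interpolation between $L^{p_{k-1}\theta}$ and $L^{p_0}$ produces the exponent $1-p/p_0$ stated in the conclusion. Telescoping the constants and summing the geometric series of scale factors produces the claimed dependence on $(1-\delta)^{-(2+\nu)}$ and $V_f(B_x(r))$. For the sup bound on $u^{-p}$, I would instead apply the energy inequality with $p$ replaced by $-q$ for $q>0$, iterate $q_k=p\theta^k\to\infty$ on cylinders converging to $Q_\delta$, and pass from $L^{q_k}$-norms to $L^\infty$ in the limit by the usual Moser step.

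The main obstacle is the bookkeeping in the first iteration: one must verify that the iteration can always be stopped strictly before $p_k$ reaches the forbidden exponent $1$ (using exactly the strict inequality $p_0<1+\nu/2$), that the terminal H\"older step produces precisely the exponent $1-p/p_0$ in the conclusion, and that the factor $e^{c_2A+c_3(1+A)\sqrt{K}r}$ enters the final constant linearly in its exponent rather than cumulatively, so that it takes the same form as in Proposition \ref{PoinDouHarn}. All remaining ingredients are identical to the subsolution case; $f$ enters only through the single application of Lemma \ref{NeuSob} at each iteration step.
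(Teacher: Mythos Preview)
Your plan is the paper's: Moser iteration for supersolutions driven by the weighted Sobolev inequality of Lemma~\ref{NeuSob}, splitting into the regimes $0<q<1$ (for the $L^{p_0}$ bound on the bottom cylinder $Q'_\delta$) and $q<0$ (for the sup of $u^{-p}$ on $Q_\delta$), with the $f$-dependent factor $e^{c_2A+c_3(1+A)\sqrt{K}r}$ carried along passively from each application of \eqref{loSob}.

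Two points need fixing. First, the parabolic one-step exponent is $\theta=1+2/\nu$, not the elliptic $\nu/(\nu-2)$. After the energy estimate one controls both $\sup_t\int_B(\psi w)^2$ and $\int_Q|\nabla(\psi w)|^2$; H\"older plus Sobolev give
\[
\int_B\phi^{2(1+2/\nu)}\,d\mu\leq\Bigl(\int_B\phi^{2\nu/(\nu-2)}\,d\mu\Bigr)^{(\nu-2)/\nu}\Bigl(\int_B\phi^{2}\,d\mu\Bigr)^{2/\nu},
\]
so integrating in $t$ yields $\int_{Q_{\sigma'}}u^{q\theta}\,d\bar\mu\leq C(B)\bigl((\kappa r)^{-2}\int_{Q_\sigma}u^{q}\,d\bar\mu\bigr)^{\theta}$ with $\theta=1+2/\nu$. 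With your $\theta$ the space-time integral does not close.

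Second, your termination story is off. The energy coefficient $4(1-1/q)$ forces $0<q<1$, but the target $p_0$ may exceed $1$, so one cannot ``stop before $p_k$ crosses $1$'' and then H\"older-interpolate \emph{upward} to $L^{p_0}$. The paper's device is to fix the ladder $p_i=p_0\theta^{-i}$ and, for each $i$, iterate $q_j=p_i\theta^j$ for $j=0,\dots,i-1$; every $q_j\leq p_0\theta^{-1}$ stays uniformly below $1$ (this is where the hypothesis on $p_0$ enters, keeping the $q$-dependent constants uniform), and the chain lands exactly at $\|u\|_{p_0,Q'_\delta}$. General $p\in(p_i,p_{i-1}]$ is then handled by H\"older interpolation between $\|u\|_{p_i}$ and $\|u\|_{p_{i-1}}$. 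The exponent $1-p/p_0$ comes from telescoping the $i$ one-step factors $C(B)^{\theta^j}$, not from a terminal interpolation as you describe.
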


\begin{proof}[Proof of Proposition \ref{PoinDou2}]
For any nonnegative test function $\phi\in C_0^{\infty}(B)$ and any
supersolution of the heat equation, we have
\[
\int_B (\phi\partial_t u+\nabla\phi\nabla u)d\mu\geq0.
\]
Let $\phi=\epsilon qu^{q-1}\psi^2$, $w=u^{q/2}$ for $-\infty<q\leq
p(1+\nu/2)^{-1}<1$ and $q\neq 0$, where $\epsilon=1$ if $q>0$ and
$\epsilon=-1$ if $q<0$. We get
\[
\epsilon\int_B \left(\psi^2\partial_t w^2+4(1-1/q)\psi^2|\nabla
w|^2+4w\psi\langle\nabla w,\nabla \psi\rangle\right)d\mu\geq 0.
\]
When $q>0$. Since
\[
2w\psi\langle\nabla w,\nabla \psi\rangle\geq- a^{-2}\psi^2|\nabla
w|^2-a^2w^2|\nabla\psi|^2
\]
for any $a>0$, we get
\[
-\int_B \psi^2\partial_t (w^2) d\mu+c_1\int_B |\nabla (\psi w)|^2
d\mu\leq c_2\|\nabla\psi\|^2_{\infty}\int_{\text{supp}(\psi)}
w^2d\mu,
\]
where $c_1$ and $c_2$ depend only on $q$. Multiplying a nonnegative
smooth function $\lambda(t)$, we have
\[
-\partial_t\int_B \lambda^2\psi^2w^2 d\mu+c_1\lambda^2\int_B |\nabla
(\psi w)|^2 d\mu\leq c_3\lambda(\lambda\|\nabla\psi\|^2_{\infty}
+\|\psi\lambda'\|_{\infty})\int_B w^2d\mu.
\]
Choose $\psi$ and $\lambda$ such that
\begin{equation*}
\begin{split}
0\leq\psi\leq 1,\ \text{supp}\psi\subset\sigma B,\ &\psi=1\
\text{in}\
\sigma' B,\ |\nabla\psi|\leq(\kappa r)^{-1},\\
0\leq\lambda\leq 1,\ \lambda=1\ \text{in}\ (-\infty,s-\sigma r^2],\
& \lambda=0\ \text{in}\ [s-\sigma' r^2, \infty),\ |\lambda'|\leq
(\kappa r^2)^{-1},
\end{split}
\end{equation*}
where $0<\sigma'<\sigma<1$, $\kappa=\sigma-\sigma'$. Let
$I_{\sigma}=[s-\sigma r^2, s]$, and integrate the above inequality
on $[s-r^2, t]$ for $t\in I_{\sigma'}$. We get
\begin{equation*}
\begin{split}
&\sup_{I_{\sigma'}}\int_{\sigma'B} w^2 d\mu +c_1\int_{Q_{\sigma'}}
|\nabla w|^2 d\mu dt\leq c_4(\kappa r)^{-2}\int_{Q_{\sigma}} w^2d\mu
dt.
\end{split}
\end{equation*}
By H\"older inequality and Proposition \ref{NeuSob}, for any
$\phi\in C_0^{\infty}(B)$, we get
\begin{equation*}
\begin{split}
\int_B \phi^{2(1+2/\nu)}d\mu&\leq \left(\int_B
\phi^{2\nu/(\nu-2)}d\mu\right)^{(\nu-2)/\nu}\left(\int_B \phi^2d\mu\right)^{2/\nu}\\
&\leq
C(B)\left(\int_B(|\nabla\phi|^2+r^{-2}\phi^2)d\mu\right)\left(\int_B
\phi^2d\mu\right)^{2/\nu},
\end{split}
\end{equation*}
where $C(B):=c_1e^{c_2A+c_3(1+A)\sqrt{K}r}\,r^2V_f^{-2/\nu}$.
Therefore
\begin{equation} \label{witerationineq}
\begin{split}
\int_{Q_{\sigma'}} u^{q\theta} d\mu dt&\leq
c_3C(B)\left((r\kappa)^{-2}\int_{Q_{\sigma}} u^qd\mu
dt\right)^{\theta},
\end{split}
\end{equation}
where $\theta=1+2/\nu$. Let $p_i=p_0\theta^{-i}$, notice that by
H\"older inequality, for any $p_i<p=\eta p_i+(1-\eta)p_{i-1}\leq
p_{i-1}$ with $0\leq\eta<1$,
\begin{equation*}
\|u\|_p^p\leq \|u\|_{p_i}^{\eta
p_i}\|u\|_{p_{i-1}}^{(1-\eta)p_{i-1}},
\end{equation*}
so it suffices to prove the estimate for all $p_i$.

Fix $i$, and let $q_j=p_i\theta^j$, $1\leq j\leq i-1$, so
$0<q_j<p_0(1+\nu/2)^{-1}$. Let $\sigma_0=1$,
$\sigma_i=\sigma_{i-1}-\kappa_i$, where $\kappa_i=(1-\delta)2^{-i}$,
so $\sigma_i=1-(1-\delta)\sum_1^i 2^{-j}>\delta$. Plugging into
inequality (\ref{witerationineq}), we get
\[
\int_{Q'_{\sigma_j}} u^{q_0\theta^j}d\mu dt\leq
c_4^jC(B)\left((1-\delta)^{-2}r^{-2}\int_{Q'_{\sigma_{j-1}}}
u^{q_0\theta^{j-1}}d\mu dt\right)^{\theta},
\]
for $1\leq j\leq i$. Therefore
\[
\int_{Q'_{\sigma_i}} u^{p_0}d\mu dt\leq c_4^{\sum (i-j)\theta^{j+1}}
C(B)^{\sum\theta^j}[(1-\delta)r]^{-2\sum\theta^{j+1}}\left(\int_{Q}
u^{p_i}d\mu dt\right)^{\theta^i},
\]
where the summation is taken from $0$ to $i-1$. Therefore we obtain
\[
\left(\int_{Q'_{\sigma_i}} u^{p_0}d\mu dt\right)^{p_i/p_0}
\leq[c_5(1-\delta)^{-2-\nu}E(B)]^{1-p_i/p_0}\left(\int_{Q}
u^{p_i}d\mu dt\right),
\]
where $E(B)=C(B)^{\nu/2}r^{-2-\nu}$.

\

When $q<0$. We get
\[
\int_B (\psi^2\partial_t w^2+4(1-1/q)\psi^2|\nabla
w|^2+4w\psi\langle\nabla w,\nabla \psi\rangle) d\mu\leq 0.
\]
Applying the mean value inequality to the last term, we get
similarly
\[
\int_B \psi^2\partial_t (w^2) d\mu+c_6\int_B |\nabla (\psi w)|^2
e^{-f}dv\leq c_7\|\nabla\psi\|^2_{\infty}\int_{\text{supp}(\psi)}
w^2 d\mu.
\]
By the above argument, we can obtain
\[
\int_{Q_{\sigma'}} w^{2\theta} d\mu dt\leq
c_8C(B)\left((r\kappa)^{-2}\int_{Q_{\sigma}} w^2d\mu
dt\right)^{\theta},
\]
where $\theta=1+2/\nu$. For any $\alpha>1$, $v=u^{\alpha}$ satisfies
\[
\partial_t v-\Delta_f v\geq-\frac{\alpha-1}{\alpha}v^{-1}|\nabla v|^2,
\]
applying the above argument again, we also have
\[
\int_{Q_{\sigma'}} w^{2\alpha\theta }d\mu dt\leq
c_9C(B)\left((r\kappa)^{-2}\int_{Q_{\sigma}} w^{2\alpha}d\mu
dt\right)^{\theta}.
\]
Let $\kappa_i=(1-\delta)2^{-i-1}$, and $\sigma_0=1$,
$\sigma_i=\sigma_{i-1}-\kappa_i=1-\sum_1^i \kappa_j$, and
$\alpha_i=\theta^i$. We get
\begin{equation*}
\begin{split}
\left(\int_{Q_{\sigma_{i+1}}} w^{2\theta^{i+1} }d\mu
dt\right)^{\theta^{-i-1}}&\leq
C(B)\left(c_{10}^{i+1}[(1-\delta)r]^{-2}\int_{Q_{\sigma_{i}}}
w^{2\theta^i}d\mu dt\right)^{\theta}\\
&\leq C(B)^{\sum\theta^{-j-1}}c_{10}^{\sum
(j+1)\theta^{-j-1}}[(1-\delta)r]^{-2\sum\theta^{-j}}\int_Q w^2d\mu
dt,
\end{split}
\end{equation*}
where the summation is from $1$ to $i+1$. Therefore when
$i\rightarrow \infty$, we get
\[
\sup_{Q_{\delta}} w^2 \leq c_5
C(B)^{\nu/2}[(1-\delta)r]^{-2-\nu}\|w\|^2_{2,Q}
\]
and the conclusion follows.
\end{proof}


\section{Moser's Harnack inequality for $f$-heat equation}\label{sec3v}
In this section we prove Moser's Harnack inequalities for the
$f$-heat equation using Moser iteration, which will lead to the
sharp lower bound estimate for the $f$-heat kernel in the next
section. The arguments mainly follow those in
\cite{[Moser],[Moser2],[Saloff1],[Saloff2]}, while more delicate
analysis is required to get the accurate estimates, which depend on
the potential function. Throughout this section, we will use the
second $f$-volume comparison, i.e., \eqref{volcomp2} in Section 2.

Recall the notations defined in Introduction. for any point $x\in M$
and $r>0$, $s\in\mathbb{R}$, and $0<\varepsilon<\eta<\delta<1$, we
denote $B=B_x(r)$, $\delta B=B_x(\delta r)$ and
\[
Q=B\times (s-r^2,s),\quad Q_\delta=\delta B\times (s-\delta r^2,s),
\quad {Q'}_\delta=\delta B\times (s- r^2,s-(1-\delta)r^2),\]
\[Q_-=\delta B\times (s-\delta r^2,s-\eta
r^2), \quad Q_+=\delta B\times (s-\varepsilon r^2,s).\]

With the above notations, we have the main result in this section.
\begin{theorem}\label{Harthm}
Let $(M,g,e^{-f}dv)$ be an $n$-dimensional complete noncompact
smooth metric measure space with $\mathrm{Ric}_f\geq-(n-1)K$ for
some constant $K\geq0$. For any point $x\in M$, $r>0$, and any
parameters $0<\varepsilon<\eta<\delta<1$, let $u$ be a smooth
solution of the $f$-heat equation in $Q$, then there exist constants
$c_1$ and $c_2$ both depending on $n$, $\varepsilon$, $\eta$ and
$\delta$, such that
\[
\sup_{Q_-}\ u\leq c_1e^{c_2(A'^2+K)r^2}\inf_{Q_+}\ u,
\]
where $A'=A'(x,r+1)$.
\end{theorem}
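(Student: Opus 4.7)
The plan is to follow the Moser--Saloff-Coste scheme for the parabolic Harnack inequality in the presence of volume doubling and a scale-invariant Poincar\'e inequality, adapted to the weighted setting using the ingredients already established in Section~\ref{sec2a}: the local $f$-doubling property \eqref{voldop}, the $f$-Neumann Poincar\'e inequality (Lemma~\ref{NeuPoin}), and the $L^p$ mean-value estimates for sub- and supersolutions (Propositions~\ref{PoinDouHarn} and \ref{PoinDou2}). Throughout, I would invoke Remark~\ref{replace} to re-express every factor $e^{c_2 A + c_3(1+A)\sqrt{K}r}$ coming from the weighted Poincar\'e/Sobolev constants in terms of $A'$, so that the dependence on the potential can ultimately be collected in the target form $e^{c_2(A'^2+K)r^2}$.

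\textbf{Step 1 (Reduction to a bridging estimate).} Proposition~\ref{PoinDouHarn} applied to the subsolution $u$ on a cylinder slightly larger than $Q_-$ yields $\sup_{Q_-}u \lesssim \|u\|_{p_0,Q^\#}$, and the first part of Proposition~\ref{PoinDou2} applied to the supersolution $u$ on a cylinder slightly larger than $Q_+$ yields an analogous lower bound $\inf_{Q_+}u \gtrsim \|u\|_{-p_0,Q^{\#\#}}$, for any sufficiently small $p_0>0$. Thus the Harnack inequality reduces to a bridging estimate of the form $\|u\|_{p_0,Q^\#} \leq C\,\|u\|_{-p_0,Q^{\#\#}}$ with $C = c_1 e^{c_2(A'^2+K)r^2}$.

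\textbf{Step 2 (Log estimate).} Let $w = \log u$. Since $u$ solves the $f$-heat equation, $\partial_t w = \Delta_f w + |\nabla w|^2$. Testing against a spatial cutoff $\psi^2$ supported in $B$, using the self-adjointness of $\Delta_f$ with respect to $d\mu$, and applying Cauchy--Schwarz on the cross term, I obtain the differential inequality
\[
\frac{d}{dt}\int_B \psi^2 w\,d\mu + \frac12 \int_B \psi^2|\nabla w|^2\,d\mu \leq 2\int_B |\nabla\psi|^2\,d\mu.
\]
Integrating in $t$ and combining with the $f$-Neumann Poincar\'e inequality applied to $w(\cdot,t) - w_B(t)$, I then argue as in the proof of Lemma~5.4.1 of \cite{[Saloff2]} (Moser's parabolic BMO argument) to produce a constant $a = a(u)$ and a constant $C = c_1 e^{c_2(A'^2+K)r^2}$ such that for all $s>0$,
\[
\mu\otimes dt\bigl\{(x,t)\in Q^1 : w > a+s\bigr\} \leq \frac{C\,V_f(B)r^2}{s},
\]
\[
\mu\otimes dt\bigl\{(x,t)\in Q^2 : w < a-s\bigr\} \leq \frac{C\,V_f(B)r^2}{s},
\]
where $Q^1$, $Q^2$ are appropriate overlapping subcylinders of $Q$ containing $Q^\#$ and $Q^{\#\#}$.

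\textbf{Step 3 (Bombieri--Giusti bridge).} The two weak-$L^1$ estimates of Step~2 together with the $L^{p_0}$ mean-value estimates of Step~1 fit into the abstract Bombieri--Giusti--Moser lemma (Lemma~2.2.6 of \cite{[Saloff2]}). That lemma produces, for a sufficiently small $p_0>0$ depending only on the Sobolev exponent $\nu$, the bound $\|u\|_{p_0,Q^\#}\,\|u^{-1}\|_{p_0,Q^{\#\#}} \leq C$ with $C = c_1 e^{c_2(A'^2+K)r^2}$. Combining with Step~1 completes the proof.

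\textbf{Main obstacle.} The principal difficulty is bookkeeping: each call to the Sobolev inequality, the Poincar\'e inequality, and the doubling property introduces a factor of the form $e^{c A + c(1+A)\sqrt{K}r}$, and by Remark~\ref{replace} these can be rewritten as $e^{c(A'+\sqrt{K})r + cA'\sqrt{K}r^2}$. Repeated AM--GM manipulations --- using $A'r \leq \tfrac12(A'^2 r^2 + 1)$, $\sqrt{K}r \leq \tfrac12(Kr^2+1)$, and $A'\sqrt{K}r^2 \leq \tfrac12(A'^2+K)r^2$ --- consolidate all contributions into the single factor $e^{c_2(A'^2+K)r^2}$ claimed in the statement. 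The slight enlargement from $A'(x,r)$ to $A'(x,r+1)$ in the statement absorbs the overlapping chain of balls and cutoffs used in Step~2 and in the Bombieri--Giusti iteration of Step~3.
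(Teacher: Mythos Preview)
Your outline captures Steps~1--3 correctly and matches the paper's intermediate result (its Theorem~\ref{wHarn}), but it misses the decisive final step and therefore does not yield the stated constant. The problem is your ``Main obstacle'' paragraph: the Bombieri--Giusti lemma (the paper's Lemma~\ref{crilem2}) does not output the constant $C$ additively or multiplicatively --- it outputs $e^{C_1(1+C^3)}$, where $C$ itself already carries the factor $e^{c(A'+\sqrt{K})r+cA'\sqrt{K}r^2}$ coming from the Poincar\'e/doubling/mean-value constants. Hence Steps~1--3 deliver only
\[
\sup_{Q_-}u \leq \exp\!\bigl(c_4\,e^{c_2(A'+\sqrt{K})r+c_3A'\sqrt{K}r^2}\bigr)\,\inf_{Q_+}u,
\]
a \emph{double} exponential in $r$. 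No amount of AM--GM on the exponent will reduce this to the single exponential $e^{c_2(A'^2+K)r^2}$.

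The paper removes the double exponential by an additional \emph{chaining argument} applied on top of Theorem~\ref{wHarn}. One picks $N\sim C_{\varepsilon,\eta,\delta}(A'+\sqrt{K})^2r^2$ intermediate space--time points joining $(x_-,t_-)$ to $(x_+,t_+)$, and applies the weak Harnack inequality on each step with radius $r'=(\tau/N)^{1/2}\sim (A'+\sqrt{K})^{-1}$. At this scale $F(r')=e^{c_2(A'+\sqrt{K})r'+c_3A'\sqrt{K}r'^2}$ is bounded by a universal constant, so each factor is $e^{c}$ and the product of $N$ such factors is $e^{cN}=e^{c(A'+\sqrt{K})^2r^2}\leq c_1e^{c_2(A'^2+K)r^2}$. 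This chain-at-the-right-scale step is the genuine idea you need to add; the enlargement to $A'(x,r+1)$ is exactly what allows the intermediate balls in the chain to sit inside the region where $A'$ controls $|\nabla f|$.
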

\begin{remark}
The coefficient in Theorem \ref{Harthm} comes from the second volume
comparison Lemma \ref{comp}. On the other hand, the first volume
comparison in Lemma \ref{comp} leads to another Harnack inequality,
\begin{equation*}
\sup_{Q_-}\{u\}\leq
\exp\left\{c_1e^{c_2A}\left[(1+A^2)Kr^2+1\right]\right\}\cdot\inf_{Q_+}\{u\}.
\end{equation*}
Since its proof is very similar to that of Theorem \ref{Harthm}, we
omit the proof here.
\end{remark}

\vspace{1em}

We first modify the $f$-Poincar\'e inequality \eqref{Nepoinineq} in
Section 2 to a weighted version, which can be derived by adapting a
Whitney-type covering argument, see Sections 5.3.3-5.3.5 in
\cite{[Saloff2]},

Let $\xi:[0,\infty)\to [0,1]$ be a non-increasing function such that
$\xi(t)=0$ for $t>1$, and for some positive constant $\beta$
\[
\xi\left(t+\frac{1-t}{2}\right)\geq \beta\xi(t),\quad 1/2\leq t\leq
1.
\]
Let $\Psi_B(z):=\xi(\rho(x,z)/r)$ for $z\in B=B(x,r)$ and
$\Psi_B(z)=0$ for $z\in M\setminus B$, we write $\Psi(z)$ for short.
Then
\begin{lemma}\label{weigNP}
Let $(M,g,e^{-f}dv)$ be an $n$-dimensional complete noncompact
smooth metric measure space with $\mathrm{Ric}_f\geq-(n-1)K$ for
some constant $K\geq0$. There exist constants $c_1(n,\xi)$, $c_2(n)$
and $c_3(n)$ such that, for any $B_x(r)\subset M$, we have
\begin{equation}\label{weiNep}
\int_{B_x(r)}|\varphi-\varphi_{\Psi}|^2\Psi d\mu\leq
c_1e^{c_2(A'+\sqrt{K})r+c_3A'\sqrt{K}r^2}\cdot
r^2\int_{B_x(r)}|\nabla\varphi|^2\Psi d\mu
\end{equation}
for all $\varphi\in C^\infty(B_x(r))$, where
$\varphi_\Psi=\int_B\varphi\Psi d\mu/\int_B\Psi d\mu$.
\end{lemma}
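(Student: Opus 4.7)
The plan is to adapt the Whitney-type covering argument of Saloff-Coste (Sections 5.3.3--5.3.5 of \cite{[Saloff2]}) to the weighted setting, using the local $f$-volume doubling \eqref{voldop} and the unweighted local $f$-Poincar\'e inequality \eqref{Nepoinineq} of Lemma \ref{NeuPoin} as the two analytic inputs. The output of that machine on a measure-metric space is precisely a weighted Poincar\'e inequality of the claimed form, so the whole task is to feed the right ingredients in and to keep track of the constants. Recall that by Remark \ref{replace} both the doubling constant and the Poincar\'e constant from Section \ref{sec2a} can be written in the form $c\,e^{c_2(A'+\sqrt{K})r+c_3A'\sqrt{K}r^2}$, which is exactly the factor that should propagate to the final inequality.

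First, I would construct a Whitney-type decomposition of $B=B_x(r)$ into countably many balls $B_i=B(x_i,\rho_i)$ with $\rho_i\simeq \mathrm{dist}(x_i,\partial B)$, a comparable family of enlargements $B_i^*=B(x_i,\kappa\rho_i)$ with bounded overlap, and a selection of a "central" ball $B_0$ near $x$ with $\rho_0\simeq r/10$. The properties of $\xi$ (monotonicity plus the compression bound $\xi(t+(1-t)/2)\geq\beta\xi(t)$) imply that $\Psi$ is essentially constant on each enlarged Whitney ball, i.e.\ $\sup_{B_i^*}\Psi\leq C(\beta)\inf_{B_i^*}\Psi$, with a constant independent of $f$. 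Moreover, along a Whitney chain $B_{i_0}=B_0,B_{i_1},\ldots,B_{i_N}=B_i$ joining $B_0$ to $B_i$, the values of $\Psi$ on successive balls differ by a multiplicative factor controlled only by $\xi$, while the length $N$ of the chain is controlled only by the doubling dimension and not by $f$.

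Second, I would apply the standard $f$-Poincar\'e inequality \eqref{Nepoinineq} on each enlarged Whitney ball $B_i^*$, which gives
\[
\int_{B_i^*}|\varphi-\varphi_{B_i^*}|^2 d\mu \leq C_P\,\rho_i^2\int_{B_i^*}|\nabla\varphi|^2 d\mu,
\qquad C_P=c_1 e^{c_2(A'+\sqrt{K})r+c_3 A'\sqrt{K}r^2},
\]
and then, following the standard chain-and-sum argument, compare the local means $\varphi_{B_i^*}$ to a single global reference value (e.g.\ $\varphi_{B_0}$) using the Whitney chain and doubling; the telescoping uses $|\varphi_{B_{i_k}^*}-\varphi_{B_{i_{k+1}}^*}|^2\leq C\,\mu(B_{i_k}^*\cap B_{i_{k+1}}^*)^{-1}\int_{B_{i_k}^*\cup B_{i_{k+1}}^*}|\varphi-\varphi_{B_{i_k}^*}|^2 d\mu$. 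Multiplying each local integral by the (essentially constant) weight $\Psi|_{B_i^*}$, summing in $i$ using the bounded-overlap property, and finally replacing $\varphi_{B_0}$ by the weighted mean $\varphi_\Psi$ (which costs only another application of the same argument, since $|\varphi_\Psi-\varphi_{B_0}|^2\int\Psi d\mu\leq\int|\varphi-\varphi_{B_0}|^2\Psi d\mu$), yields the weighted inequality with a constant of the form $C_P\cdot C(n,\xi)$.

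The key technical point, and the step I expect to be the main obstacle, is verifying that all the combinatorial constants produced by the Whitney decomposition, the bounded-overlap count, and the chain-telescoping depend only on $n$ (through the doubling dimension) and on $\xi$ (through $\beta$), and do \emph{not} pick up any additional $f$-dependence beyond the factor $C_P$ inherited from \eqref{Nepoinineq} and \eqref{voldop}. This is where the uniform doubling dimension bound from \eqref{voldop} is used: applied on balls of radius $\leq r$, it gives a doubling factor controlled by $e^{c_2(A'+\sqrt{K})r+c_3A'\sqrt{K}r^2}$, which governs both the overlap count and the comparison $V_f(B_i^*)\simeq V_f(B_{i+1}^*)$ along a chain. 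Once these combinatorial constants are shown to be absorbed into the exponential prefactor, the asserted inequality \eqref{weiNep} follows.
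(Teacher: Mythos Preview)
Your proposal is correct and follows essentially the same approach as the paper: the paper does not give a detailed proof but simply points to the Whitney-type covering argument in Sections 5.3.3--5.3.5 of \cite{[Saloff2]}, which is precisely the machinery you have outlined, with the local $f$-volume doubling \eqref{voldop} and the unweighted $f$-Poincar\'e inequality \eqref{Nepoinineq} (in their $A'$-form from Remark \ref{replace}) as inputs. Your sketch in fact supplies more detail than the paper itself, and your observation that all the combinatorial constants from the covering and chaining are governed by the doubling constant---and hence are absorbed into the same exponential prefactor---is exactly the reason the final constant has the stated form.
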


\vspace{1em}

Secondly, for a positive solution $u$ to the $f$-heat equation, we
derive an estimate for the level set of $\log u$, the proof of which
depends on Lemma \ref{weigNP}. This inequality is important for the
iteration arguments in Lemma \ref{crilem2}. In the following, we
denote $d\bar{\mu}=d\mu\times dt$ by the natural product measure on
$M\times \mathbb{R}$.
\begin{lemma}\label{crilem1}
Let $(M,g,e^{-f}dv)$ be an $n$-dimensional complete noncompact
smooth metric measure space. Assume that \eqref{voldop} and
\eqref{Nepoinineq} hold in $B_x(r)$. Fix $s\in\mathbb{R}$,
$\delta,\tau\in(0,1)$. For any smooth positive solution $u$ of the
$f$-heat equation in $Q=B_x(r)\times(s-r^2,s)$, there exists a
constant $c=c(u)$ depending on $u$ such that for all $\lambda>0$,
\begin{equation*}
\begin{split}
\bar{\mu}(\{(z,t)\in R_+|\log u<-\lambda-c\})\leq&
C_0\lambda^{-1},\\
\bar{\mu}(\{(z,t)\in R_-|\log u>\lambda-c\})\leq& C_0\lambda^{-1},
\end{split}
\end{equation*}
where
$C_0=c_1(n,\delta,\tau)e^{c_2(A'+\sqrt{K})r+c_3A'\sqrt{K}r^2}V_f(B)r^2$.
Here $R_+=\delta B\times(s-\tau r^2,s)$ and $R_-=\delta
B\times(s-r^2,s-\tau r^2)$.
\end{lemma}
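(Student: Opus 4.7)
The plan is to adapt Moser's classical log-estimate to the $f$-heat setting, following the approach of Saloff-Coste \cite{[Saloff1], [Saloff2]}. Set $v := -\log u$; direct computation from $\partial_t u = \Delta_f u$ yields
\[
\partial_t v - \Delta_f v + |\nabla v|^2 = 0.
\]
I would take the radial cutoff $\Psi$ of Lemma \ref{weigNP}, multiply the equation by $\Psi^{2}$, integrate by parts in space, and absorb the cross term by Cauchy--Schwarz via $2|\int \Psi\nabla\Psi\cdot \nabla v \, d\mu| \leq \tfrac{1}{2}\int \Psi^{2}|\nabla v|^2 d\mu + 2\int |\nabla \Psi|^2 d\mu$. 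Dividing by $\mu_\Psi := \int \Psi^{2} d\mu$ and invoking the weighted Poincar\'e inequality \eqref{weiNep} (applied with $\Psi^2$ in place of $\Psi$) to replace the Dirichlet energy by a variance produces the master ODE for the weighted spatial average
\[
W(t) := \mu_\Psi^{-1}\!\int \Psi^{2}\, v\,d\mu, \qquad W'(t) + \frac{V(t)}{2 C_P r^2} \leq \frac{C_1}{r^2},
\]
where $V(t) := \mu_\Psi^{-1}\int \Psi^{2}(v-W(t))^2 d\mu \geq 0$, $C_P = c_1 e^{c_2(A'+\sqrt{K})r + c_3 A'\sqrt{K}r^2}$ is the Poincar\'e constant of \eqref{weiNep}, and $C_1 = C_1(n,\delta,\tau)$ absorbs $\mu_\Psi^{-1}\!\int|\nabla \Psi|^2 d\mu \leq C/r^2$ via the doubling estimate \eqref{voldop}.

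I would then set $c = c(u) := \mu_\Psi^{-1}\!\int \Psi^{2}(-\log u)(\cdot,\,s-\tau r^2)\, d\mu$, so that after replacing $v$ by $v-c$ one has $W(s-\tau r^2) = 0$. Since $V \geq 0$, integrating the ODE forward and backward from $s-\tau r^2$ gives the one-sided pointwise bounds
\[
W(t) \leq C_1\tau \ \text{ for } \ t \in (s-\tau r^2,\, s), \qquad W(t) \geq -C_1(1-\tau) \ \text{ for } \ t \in (s-r^2,\, s-\tau r^2).
\]

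For the future estimate, set $\gamma(t) := \lambda - W(t)$, which is $\geq \lambda/2$ whenever $\lambda \geq 2 C_1$. Chebyshev gives $\mu_\Psi(\{v(\cdot,t) > \lambda\}) \leq \gamma(t)^{-2}\, \mu_\Psi V(t)$; integrating in $t$ and using the ODE bound $V/(2C_P r^2) \leq C_1/r^2 - W'$ yields
\[
\int_{s-\tau r^2}^{s}\!\! \mu_\Psi(\{v > \lambda\}) \, dt \leq 2 C_P r^2 \mu_\Psi \!\left[\frac{C_1}{r^2}\!\int\!\gamma^{-2}dt \;-\; \int W'\gamma^{-2}\, dt\right].
\]
The first bracketed integral is $\leq 4\tau r^2/\lambda^2$. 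The key second integral telescopes exactly, since $\tfrac{d}{dt}\gamma^{-1} = W'/\gamma^2$:
\[
-\int_{s-\tau r^2}^{s}\frac{W'}{\gamma^2}\, dt = \gamma(s-\tau r^2)^{-1} - \gamma(s)^{-1} = \frac{1}{\lambda} - \frac{1}{\lambda+M} = \frac{M}{\lambda(\lambda+M)} \leq \frac{1}{\lambda},
\]
where $M := (-W(s))_+ \geq 0$ and the algebraic bound $M/[\lambda(\lambda+M)] \leq 1/\lambda$ holds uniformly in $M \geq 0$. Combining, $\int \mu_\Psi(\{v>\lambda\}) dt \leq C V_f(B) r^2/\lambda$; the conversion $\mu_\Psi \to \bar\mu$ (using $\Psi \geq \beta$ on $\delta B$) together with the trivial estimate $\bar\mu \leq \tau r^2 V_f(\delta B)$ for small $\lambda$ completes the $R_+$ bound. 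The $R_-$ bound is entirely parallel, working with $\gamma(t) := W(t) + \lambda$ and the lower bound $W \geq -C_1(1-\tau)$ on the past-side interval.

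The principal obstacle is to preserve the sharp $\lambda^{-1}$ rate while keeping $C_0$ independent of $u$. A naive Chebyshev coupled with Cauchy--Schwarz produces only $\lambda^{-2}$ multiplied by $\int V\, dt$, and the ODE bounds $\int V\, dt$ only by $2C_P r^2(C_1\tau - W(s))$, which is \emph{a priori} unbounded in $u$ since $W(s)$ can be very negative. The telescoping identity for $-\int W'/\gamma^2\, dt$ is precisely what eliminates this $u$-dependence: it trades the potentially large tail of $\int V\, dt$ against the extra factor $\gamma^{-2}$ to yield the clean algebraic bound $1/\lambda$.
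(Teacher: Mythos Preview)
Your proposal is correct and follows essentially the same route as the paper: set $v=-\log u$, derive the master ODE $W'(t)+\text{(variance term)}\le C/r^2$ via the weighted Poincar\'e inequality \eqref{weiNep}, set $c=W(s-\tau r^2)$, and then use the Chebyshev/level-set bound together with the telescoping identity $-\int W'\gamma^{-2}\,dt=\gamma^{-1}\big|_{\text{endpoints}}$ to obtain the sharp $\lambda^{-1}$ decay independent of $u$. The only cosmetic difference is that the paper first performs the time-shift $\overline{\omega}=\omega-C_2(t-s')$ so that $\partial_t\overline{W}\le 0$ and the telescoping is a single clean term, whereas you keep the inhomogeneous ODE and split off the harmless $O(\lambda^{-2})$ contribution from $C_1/r^2$; the mechanism is identical.
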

\begin{proof}
By shrinking the ball $B$ a little, we can assume that $u$ is a
positive solution in $B_x(r')\times(s-r^2,s)$ for some $r'>r$. Let
$\omega=-\log u$. Then for any nonnegative function $\psi\in
C_0(B_x(r'))$, we have
\[
\partial_t\int \psi^2\omega d\mu=-\int\psi^2 u^{-1}\Delta_fu d\mu
=\int\left[-\psi^2|\nabla\omega|^2-2\psi\nabla\omega\cdot\nabla\psi\right]d\mu.
\]
By Cauchy-Schwarz inequality $2|ab|\leq 1/2 a^2+2b^2$, we obtain
\[
\partial_t\int \psi^2\omega d\mu+1/2\int^2|\nabla\omega|^2d\mu\leq
2||\nabla \psi||_{\infty}^2V_f(\rm{supp}(\psi)).
\]
Fix $0<\delta<1$ and define function $\xi$ such that $\xi=1$ on
$[0,\delta]$, $\xi(t)=\frac{1-t}{1-\delta}$ on $[\delta,1]$ and
$\xi=0$ on $[1,\infty)$. We set $\Psi=\xi(\rho(x,\cdot)/r)$.
Clearly, we can apply the above to $\psi=\Psi$. Then Lemma
\ref{weigNP} can be applied with $\Psi^2$ as a weight function.
Thus, we have
\[
\int |\nabla\omega|^2\Psi^2d\mu\geq \left(c_\delta r^2
e^{c_2(A'+\sqrt{K})r+c_3A'\sqrt{K}r^2}\right)^{-1}\int|\omega-W|^2\Psi^2d\mu,
\]
where $W:=\int\Psi^2\omega d\mu/\int\Psi^2d\mu$. Noticing that $\int
\Psi^2$ is comparable to $V_f$, so
\[
\partial_tW+C^{-1}_1\int_{\delta B}|\omega-W|^2\leq C_2,
\]
where
$C_1=C(\delta,\tau)e^{c_2(A'+\sqrt{K})r+c_3A'\sqrt{K}r^2}r^2V_f$ and
$C_2=C(\delta,\tau)r^{-2}$. Letting $s'=s-\tau r^2$, the above
inequality can be written as
\[
\partial_t\overline{W}+C^{-1}_1\int_{\delta B}|\overline{\omega}-\overline{W}|^2\leq 0,
\]
where $\overline{\omega}(z,t)=\omega(z,t)-C_2(t-s')$ and
$\overline{W}(z,t)=W(z,t)-C_2(t-s')$.

Now we set
\[
c=W(s')=\overline{W}(s'),
\]
and for $\lambda>0$, $s-r^2<t<s$, we define two sets
\[
\Omega^{+}_t(\lambda)=\{z\in \delta B,\bar{\omega}(z,t)>c+\lambda\}
\quad\mathrm{and}\quad \Omega^{-}_t(\lambda)=\{z\in \delta
B,\bar{\omega}(z,t)<c-\lambda\}.
\]
Then if $t>s'$, we have
\[
\overline{\omega}(z,t)-\overline{W}(t)\geq
\lambda+c-\overline{W}(t)>\lambda
\]
in $\Omega^{+}_t(\lambda)$, since $c=\overline{W}(s')$ and
$\partial_t\overline{W}\leq 0$. Similarly, if $t<s'$, then we have
\[
\overline{\omega}(z,t)-\overline{W}(t)\leq-\lambda+c-\overline{W}(s')<-\lambda
\]
in $\Omega^{-}_t(\lambda)$. Hence, if $t>s'$, we obtain
\[
\partial_t\overline{W}(t)+C^{-1}_1|\lambda+c-\overline{W}(t)|^2\mu(\Omega^{+}_t(\lambda))\leq 0
\]
and namely,
\[
-C_1\partial_t(|\lambda+c-\overline{W}(t)|^{-1})\geq\mu(\Omega^{+}_t(\lambda)).
\]
Integrating from $s'$ to $s$,
\[
\bar{\mu}(\{(z,t)\in R_+,\overline{\omega}>c+\lambda\})\leq
C_1\lambda^{-1}.
\]
Recalling that $-\log u=\omega=\overline{\omega}+C_2(t-s')$, hence
\[
\bar{\mu}(\{(z,t)\in R_+,\log u<-\lambda-c\})\leq
(\max\{C_1,C_2r^4V_f\})\lambda^{-1}.
\]
This gives the first estimate of the lemma. The second estimate
follows from a similar argument by working with $\Omega^{-}_t$ and
$t<s'$.
\end{proof}

Thirdly, in order to finish the proof of Theorem \ref{Harthm}, we
need the following elementary lemma. This is in fact an iterated
procedure. We let $R_\sigma$, $0<\sigma\leq 1$ be a collection of
subset for some space-time endowed with the measure $d\bar{\mu}$
such that $R_{\sigma'}\subset R_\sigma$ if $\sigma'\leq\sigma$.
Indeed, $R_\sigma$ will be one of the collections $Q_\delta$ or
${Q'}_\delta$.
\begin{lemma}\label{crilem2}
Let $\gamma$, $C$, $1/2\leq\delta<1$, $p_1<p_0\leq \infty$ be
positive constants, and let $\varphi$ be a positive smooth function
on $R_1$ such that
\[
||\varphi||_{p_0,R_{\sigma'}}
\leq\{C(\sigma-\sigma')^{-\gamma}V_f^{-1}(R_1)\}^{1/p-1/{p_0}}||\varphi||_{p,R_{\sigma}}
\]
for all $\sigma$, $\sigma'$, $p$ satisfying
$1/2\leq\delta\leq\sigma'<\sigma\leq 1$ and $0<p\leq p_1<p_0$.
Besides, if $\varphi$ also satisfies
\[
Vol_f(\{z\in R_1,\ln \varphi>\lambda\})\leq C V_f(R_1)\lambda^{-1}
\]
for all $\lambda>0$, then we have
\[
||\varphi||_{p_0,R_\delta}\leq (V_f(R_1))^{1/{p_0}}e^{C_1(1+C^3)},
\]
where $C_1$ depends only on $\gamma$, $\delta$ and a positive lower
bound on $1/{p_1}-1/{p_0}$.
\end{lemma}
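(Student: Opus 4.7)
The strategy is a Bombieri--Giusti--Moser iteration combining the reverse-H\"older hypothesis with the weak-$L^1$ tail bound on $\ln\varphi$. Set $F(\sigma):=\|\varphi\|_{p_0,R_\sigma}$ and, to remove the explicit dependence on the total measure, $G(\sigma):=V_f(R_1)^{-1/p_0}F(\sigma)$, so that the target inequality becomes $G(\delta)\le e^{C_1(1+C^3)}$.

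The first step is a level-set splitting. For any $M>1$ and $0<p\le p_1$, H\"older's inequality applied to the decomposition $\{\varphi\le M\}\sqcup\{\varphi>M\}$, combined with the hypothesis $V_f(\{\varphi>M\}\cap R_1)\le CV_f(R_1)/\ln M$, gives
\[
\|\varphi\|_{p,R_\sigma}^p\le M^p V_f(R_1)+F(\sigma)^p(CV_f(R_1)/\ln M)^{1-p/p_0}.
\]
Feeding this into the reverse-H\"older hypothesis and writing $\theta=p/p_0$ yields
\[
G(\sigma')^p\le\{C(\sigma-\sigma')^{-\gamma}\}^{1-\theta}M^p+\{C^2(\sigma-\sigma')^{-\gamma}/\ln M\}^{1-\theta}G(\sigma)^p.
\]
Because $1/p_1-1/p_0$ is assumed bounded below, $1-\theta\ge 1-p_1/p_0$ is bounded away from $0$, and setting $\ln M:=C_\theta C^2(\sigma-\sigma')^{-\gamma}$ with $C_\theta=2^{1/(1-\theta)}$ forces the coefficient of $G(\sigma)^p$ to be at most $1/2$, leaving the fundamental recursion
\[
G(\sigma')^p\le\{C(\sigma-\sigma')^{-\gamma}\}^{1-\theta}\exp\bigl(pC_\theta C^2(\sigma-\sigma')^{-\gamma}\bigr)+\tfrac12 G(\sigma)^p.
\]

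Next I iterate this recursion on a sequence $\delta=\sigma_0<\sigma_1<\cdots\le 1$ and telescope to
\[
G(\delta)^p\le\sum_{i=0}^{N-1}2^{-i}K_i+2^{-N}G(\sigma_N)^p,
\]
where $K_i$ is the first term of the recursion for the step $\sigma_i\to\sigma_{i+1}$. Since $\varphi$ is smooth on the bounded set $R_1$, $G(1)$ is a priori finite, so the boundary term vanishes in the limit. Taking the $1/p$-th root at the end and tracking the appearances of $C$---one from the reverse-H\"older hypothesis, the others from the choice of $\ln M$ via the weak bound---will produce the stated bound $e^{C_1(1+C^3)}$, with $C_1$ depending only on $\gamma$, $\delta$ and the lower bound on $1/p_1-1/p_0$.

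The hard part is arranging summability of $\sum 2^{-i}K_i$: the exponential factor $\exp(pC_\theta C^2(\sigma_i-\sigma_{i-1})^{-\gamma})$ in $K_i$ grows doubly-exponentially when step sizes shrink geometrically, overwhelming the damping $2^{-i}$ at any fixed $p$. The Bombieri--Giusti remedy---and precisely the point where the quantitative lower bound on $1/p_1-1/p_0$ is used---is to let $p$ shrink along the iteration so that $p\cdot(\sigma_i-\sigma_{i-1})^{-\gamma}=O(1)$, keeping $\theta$ controlled while extracting a geometric sum from the telescope.
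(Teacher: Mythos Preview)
Your splitting and the recursion
\[
G(\sigma')^p \le \{C(\sigma-\sigma')^{-\gamma}\}^{1-\theta}M^p + \{C^2(\sigma-\sigma')^{-\gamma}/\ln M\}^{1-\theta}G(\sigma)^p
\]
are correct, but the choice $\ln M = C_\theta C^2(\sigma-\sigma')^{-\gamma}$ is the source of the blow-up you identify, and the proposed remedy does not repair it. If you fix $p$ and telescope, the exponential in $K_i$ kills the geometric damping, as you say. If instead you run the whole chain at a single very small $p$ so that $p\,(\sigma_N-\sigma_{N-1})^{-\gamma}=O(1)$, the sum $\sum 2^{-i}K_i$ does stay bounded, but you only obtain $G(\delta)^p\le C''$ with $p\to 0$, and the $1/p$-th root of $C''$ diverges. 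If you vary $p_i$ step by step, the inequalities $G(\sigma_{i-1})^{p_i}\le K_i+\tfrac12 G(\sigma_i)^{p_i}$ live at different exponents and do not chain: relating $G(\sigma_i)^{p_i}$ to $G(\sigma_i)^{p_{i+1}}$ requires raising to the power $p_i/p_{i+1}>1$, which reintroduces the blow-up. No choice of $p$ closes the argument with $M$ chosen independently of $G(\sigma)$.

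The missing idea (Moser 1971, and what the paper does) is to let the cutoff track the \emph{current norm}: set $\zeta(\sigma)=\ln G(\sigma)$ and split at $M=e^{\zeta/2}$. Then the two pieces in your estimate become $e^{\zeta}(2C/\zeta)^{1/p-1/p_0}$ and $e^{\zeta/2}$; now choose $p$ so that these balance, namely $(1/p-1/p_0)=\zeta/(2\ln(\zeta/2C))$. This is where the lower bound on $1/p_1-1/p_0$ enters and requires $\zeta\gtrsim C$. One gets $\|\varphi\|_{p,R_\sigma}\le 2e^{\zeta/2}$, and feeding this into the reverse-H\"older hypothesis yields the genuinely contractive recursion
\[
\zeta(\sigma')\le\tfrac34\,\zeta(\sigma)+C_3(1+C^3)(\sigma-\sigma')^{-2\gamma},
\]
valid whenever $\zeta(\sigma)$ exceeds a threshold of order $C^3(\sigma-\sigma')^{-2\gamma}$ (below that threshold the bound is immediate). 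This iterates cleanly on a geometric sequence of radii to give $\zeta(\delta)\le C_4(1-\delta)^{-2\gamma}(1+C^3)$. The cube $C^3$ in the conclusion comes precisely from that threshold, which your scheme never produces.
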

\begin{proof}
Without loss of generality we may assume that $Vol_f(R_1)=1$. Let
\[
\zeta=\zeta(\sigma):=\ln(||\varphi||_{p_0,R_\sigma}),\quad
\delta\leq\sigma<1.
\]
We divide $R_\sigma$ into two sets: $\{\ln\varphi>\zeta/2\}$ and
$\{\ln\varphi\leq\zeta/2\}$. Then
\begin{equation*}
\begin{aligned}
||\varphi||_{p,R_\sigma}&\leq ||\varphi||_{p_0,R_\sigma}\cdot
V_f(\{z\in R_\sigma,\ln\varphi>\zeta/2\})^{1/p-1/{p_0}}+e^{\zeta/2}\\
&\leq
e^\zeta\left(\frac{2C}{\zeta}\right)^{1/p-1/{p_0}}+e^{\zeta/2},
\end{aligned}
\end{equation*}
where $p<p_0$. Here in the first inequality we used the H\"{o}lder
inequality, and in the second inequality we used the second
assumption of lemma. In the following we want to choose $p$ such
that the last two terms in above are equal, and $0<p\leq p_1$. This
is possible if
\[
(1/p-1/{p_0})^{-1}=(2/\zeta)\ln\left(\frac{\zeta}{2C}\right)\leq(1/{p_1}-1/{p_0})^{-1}
\]
and the last inequality is satisfied as long as
\[
\zeta\geq C_2C,
\]
where $C_2$ depends only on a positive lower bound on
$1/{p_1}-1/{p_0}$. Now we assume $p$ and $\zeta$ have been chosen as
above. Then we obtain
\[
||\varphi||_{p,R_\sigma}\leq 2e^{\zeta/2}.
\]
Using the first assumption of the lemma and the definition of
$\kappa$, we have
\begin{equation*}
\begin{aligned}
\kappa(\sigma')
&\leq\ln\left\{2\left(C(\sigma-\sigma')^{-\gamma}\right)^{1/p-1/{p_0}}e^{\zeta/2}\right\}\\
&=(1/p-1/{p_0})\ln[C(\sigma-\sigma')^{-\gamma}]+\ln 2+\zeta/2
\end{aligned}
\end{equation*}
for any $\delta\leq\sigma'<\sigma\leq1$. According to our choice of
$p$ above, we get
\[
\kappa(\sigma') \leq \frac\zeta
2\left\{\frac{\ln[C(\sigma-\sigma')^{-\gamma}]}{\ln(\zeta/C)}+\frac{2\ln
2}{\zeta}+1\right\}.
\]
Here, on one hand, if we choose
\[
\zeta\geq16C^3(\sigma-\sigma')^{-2\gamma}+8\ln 2,
\]
then the above inequality becomes
\[
\zeta(\sigma')\leq\frac 34\zeta.
\]
On the other hand, if the assumption of $\kappa$ above in not
satisfied, we can have
\[
\zeta(\sigma')\leq\zeta(\sigma)\leq
C_2C+16C^3(\sigma-\sigma')^{-2\gamma}+8\ln 2.
\]
Therefore, in any case
\[
\zeta(\sigma')\leq\frac
34\zeta(\sigma)+C_3(1+C^3)(\sigma-\sigma')^{-2\gamma}.
\]
for any $\delta\leq\sigma'<\sigma\leq1$, where $C_3=C_2+16+8\ln 2$.
From this, an routine iteration (see \cite{[Moser2]}, page 733)
yields
\[
\zeta(\delta)\leq C_4(1-\delta)^{-2\gamma}(1+C^3),
\]
where $C_4$ depends on $C_3$ and $\gamma$. This completes the proof
of the lemma.
\end{proof}

Now, applying Lemma \ref{crilem1}, Lemma \ref{crilem2} and
Proposition \ref{PoinDou2}, we get the following Harnack inequality.
\begin{theorem}\label{wHarn}
Let $(M,g,e^{-f}dv)$ be an $n$-dimensional complete noncompact
smooth metric measure space. Assume that \eqref{voldop} and
\eqref{Nepoinineq} hold in $B_x(r)$. Fix $\tau\in(0,1)$ and
$0<p_0<1+\nu/2$. For any $s\in\mathbb{R}$ and
$0<\varepsilon<\eta<\delta<1$, any smooth positive solution $u$ of
the $f$-heat equation in the cylinder $Q=B_x(r)\times(s-r^2,s)$
satisfies
\[
\parallel u\parallel_{p_0, Q_-}\leq
(r^2V_f)^{\frac{1}{p_0}}e^{c_1F(r)}\inf_{Q_+} u,
\]
where $c_1=c_1(n,\varepsilon,\eta,\delta,p_0)$ and
$F(r)=e^{c_2(A'+\sqrt{K})r+c_3A'\sqrt{K}r^2}$, $A'=A'(x,r)$. Hence
we have
\[
\sup_{Q_-} u\leq e^{c_4F(r)}\inf_{Q_+} u,
\]
where $c_4=c_4(n,\varepsilon,\eta,\delta)$.
\end{theorem}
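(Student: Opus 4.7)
The plan is to execute the classical Moser parabolic iteration scheme, routing the reverse-Hölder inequalities of Proposition \ref{PoinDou2} and the logarithmic distribution estimate of Lemma \ref{crilem1} through the abstract iteration device of Lemma \ref{crilem2}. All three ingredients already carry the exponential factor $F(r)=e^{c_2(A'+\sqrt{K})r+c_3A'\sqrt{K}r^2}$, so the task is essentially to assemble them while tracking constants. First, apply Lemma \ref{crilem1} to $u$ to produce a constant $c=c(u)\in\mathbb{R}$ for which the two weak-$L^1$ bounds on $\log u$ hold with $C_0\leq c\,F(r)\,V_f(B)\,r^2$; choosing the parameter $\tau$ of Lemma \ref{crilem1} equal to $\eta$ guarantees that the bounds are valid on $Q_-$ and $Q_+$. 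The argument then splits into two parallel iterations.

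For the $L^{p_0}$ bound on $Q_-$, set $\varphi=e^{-c}u$. The second estimate of Lemma \ref{crilem1} is precisely the distribution hypothesis on $\log\varphi$ required by Lemma \ref{crilem2}, and the first conclusion of Proposition \ref{PoinDou2}, applied on a nested family of parabolic cylinders $R_\sigma$ interpolating between $Q_-$ and $Q$, furnishes the reverse-Hölder hypothesis with $\gamma=2+\nu$ and controlling constant of size $F(r)$. Lemma \ref{crilem2} then yields
\[
\|\varphi\|_{p_0,Q_-}\leq (V_f(B)\,r^2)^{1/p_0}\exp(c'F(r)),
\]
hence $\|u\|_{p_0,Q_-}\leq (V_f(B)\,r^2)^{1/p_0}\exp(c'F(r))\,e^{c}$.

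For the companion lower estimate on $e^{c}$, set $\psi=e^{c}/u$ on $Q_+$. The first estimate of Lemma \ref{crilem1} becomes the distribution hypothesis for $\log\psi$, and the second conclusion of Proposition \ref{PoinDou2}, namely $\sup_{Q_\delta}u^{-p}\leq(\cdots)\|u^{-1}\|^p_{p,Q}$, is exactly the reverse-Hölder hypothesis of Lemma \ref{crilem2} in the limiting case $p_0=\infty$. Applying Lemma \ref{crilem2} to $\psi$ yields $\sup_{Q_+}\psi\leq\exp(c''F(r))$, i.e., $e^{c}\leq\exp(c''F(r))\inf_{Q_+}u$. Multiplying the two halves produces the first inequality of Theorem \ref{wHarn}. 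The sup version is then obtained by one last application of Proposition \ref{PoinDouHarn} (the $L^{p_0}\to L^\infty$ mean-value inequality for the subsolution $u$) on a cylinder slightly larger than $Q_-$ but still inside $Q$, which absorbs the prefactor $(V_f(B)\,r^2)^{1/p_0}$ into the exponential.

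The main obstacle I anticipate is the bookkeeping: verifying that the nested cylinders $R_\sigma$ used in each Moser iteration stay inside $B_x(r)\times(s-r^2,s)$ so that $A'=A'(x,r)$ continues to control the constants, and confirming that the various constants produced along the way collapse into a single exponential $\exp(cF(r))$ rather than producing a spurious $\exp(cA'^2r^2)$ or $\exp(cKr^2)$ outside of $F$. Once this bookkeeping is in place, Lemma \ref{crilem2} delivers both halves of the Harnack inequality essentially mechanically.
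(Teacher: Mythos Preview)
Your proposal is correct and follows essentially the same route as the paper: apply Lemma~\ref{crilem2} twice, once to $e^{-c}u$ on $R_-$-type cylinders using the first part of Proposition~\ref{PoinDou2} (with $p_1=p_0/2$), and once to $e^{c}u^{-1}$ on $R_+$-type cylinders using the second part of Proposition~\ref{PoinDou2} (with $p_0=\infty$), then combine and finish with Proposition~\ref{PoinDouHarn}. Your choice $\tau=\eta$ and $\gamma=2+\nu$ matches, and your sign convention for $c$ is in fact the correct one; note also that Lemma~\ref{crilem2} outputs $\exp(C_1(1+C^3))$, so the $F(r)^3$ that appears is absorbed back into $F(r)$ by adjusting $c_2,c_3$, which resolves the bookkeeping concern you flagged.
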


\begin{proof}[Proof of Theorem \ref{wHarn}]
We let $u$ be a positive solution to the $f$-heat equation in $Q$.
Let also $\delta,\tau\in(0,1)$ be fixed. Using Proposition
\ref{PoinDou2} and Lemma \ref{crilem1}, we see that Lemma
\ref{crilem2} can be applied to $e^cu$ (resp. $e^{-c}u^{-1}$), where
$c=c(u)$ is defined as in Lemma \ref{crilem1}, with
\[
R_\sigma=\sigma\delta B\times(s-r^2,s-\tau r^2-\sigma\tau r^2) \quad
(\mathrm{resp.}\,\, R_\sigma=\sigma\delta B\times(s-\sigma\tau r^2,
s))
\]
and $0<p_1=p_0/2<p_0<1+\nu/2$ (resp. $0<p_1=1<p_0=\infty$). Hence
for any $0<\varepsilon<\eta<\delta<1$ and $Q_{-}$, $Q_{+}$ as
defined as above, we have
\[
e^c\parallel u\parallel_{p_0, Q_-}\leq (r^2V_f)^{1/{p_0}}e^{c_1F(r)}
\]
and
\[
e^{-c}\sup_{Q_+}\{u^{-1}\}\leq e^{c_4F(r)},
\]
where $c_1=c_1(n,\varepsilon,\eta,\delta,p_0)$,
$c_4=c_4(n,\varepsilon,\eta,\delta)$ and
$F(r)=e^{c_2(A'+\sqrt{K})r+c_3A'\sqrt{K}r^2}$. The theorem follows
from this and Proposition \ref{PoinDouHarn}.
\end{proof}

Finally, we finish the proof of Theorem \ref{Harthm} by applying the
standard chain argument to Theorem \ref{wHarn}.

\begin{proof}[Proof of theorem \ref{Harthm}]
Let $(t_-, x_-)\in Q_-$, $(t_+, x_+)\in Q_+$, and let
$\tau=t_+-t_-$. Notice that $\tau\sim r^2$ and $d=d(x_-,x_+)<r$. Let
$t_i=t_-+\frac{i\tau}{N}$ and $x_i\in\frac{1+\delta}{2}B$ for $0\leq
i\leq N$, such that $x_0=x_-$, $x_N=x_+$, and $d(x_i, x_{i+1})\leq
C_{\delta}\frac{d}{N}$. Choose $N$ to be the smallest number such
that
\[
N\geq C_{\varepsilon,\eta,\delta}(A'+\sqrt{K})^2 r^2,
\]
where $A'=A'(x,r+1)$, applying Theorem \ref{wHarn} with
$r'=(\frac{\tau}{N})^{\frac{1}{2}}$, then we have
\begin{equation*}
\begin{aligned}
u(t_-, x_-)&\leq e^{c_4F(r')(N+1)}u(t_+, x_+)\\
&\leq e^{c_4F\left(\frac{1}{C(A'+\sqrt{K})}\right)(N+1)}u(t_+, x_+)\\
&\leq \exp\left[c(A'+\sqrt{K})^2 r^2+c\right]u(t_+, x_+),
\end{aligned}
\end{equation*}
where $c$ depends on $n$, $\varepsilon$, $\eta$ and $\delta$. This
finishes the proof of Theorem \ref{Harthm}.
\end{proof}


\section{Gaussian upper and lower bounds of the $f$-heat kernel}\label{sec3}

In this section, following the arguments in \cite{[Saloff2]}, we
derive Gaussian upper and lower bounds for the $f$-heat kernel on
smooth metric measure spaces. The upper bound estimate follows from
the $f$-mean value inequality in Proposition \ref{PoinDouHarn} and a
weighted version of Davies integral estimate (see \cite{[WuWu]}).
The lower bound estimate follows from the local Harnack inequality
in Section \ref{sec3v}.

Let us first state the weighted Davies integral estimate, see
\cite{[WuWu]} for the proof,
\begin{lemma}\label{lemm3.3}
Let $(M,g,e^{-f}dv)$ be an $n$-dimensional complete smooth metric
measure space. Let $\lambda_1(M)\geq0$ be the bottom of the
$L_f^2$-spectrum of the $f$-Laplacian on $M$. Assume that $B_1$ and
$B_2$ are bounded subsets of $M$. Then
\begin{equation}\label{Dvpp}
\int_{B_1}\int_{B_2}H(x,y,t)d\mu(x)d\mu(y)\leq
V^{1/2}_f(B_1)V^{1/2}_f(B_2)\exp\left(-\lambda_1(M)t-\frac{d^2(B_1,B_2)}{4t}\right),
\end{equation}
where $d(B_1,B_2)$ denotes the distance between the sets $B_1$ and
$B_2$.
\end{lemma}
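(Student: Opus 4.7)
The plan is to adapt the classical Davies integrated maximum principle to the weighted setting, exploiting only the self-adjointness of $\Delta_f$ with respect to $d\mu$ and the definition of $\lambda_1(M)$. The key device is a twist by a Lipschitz weight $e^{-\alpha\xi}$ that is eventually optimized over the real parameter $\alpha$.

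First I would introduce $v(y,t):=\int_{B_1}H(x,y,t)\,d\mu(x)$, which is the minimal nonnegative solution of the $f$-heat equation with initial datum $\chi_{B_1}$. Pick any 1-Lipschitz function $\xi$ on $M$ and set $w(y,t):=e^{-\alpha\xi(y)}v(y,t)$. Using $\partial_t v=\Delta_f v$, self-adjointness of $\Delta_f$, and integration by parts, I would compute
\[
\frac{d}{dt}\|w(\cdot,t)\|_f^{2}
=2\int e^{-2\alpha\xi}v\,\Delta_f v\,d\mu
=-2\int \nabla(e^{-2\alpha\xi}v)\cdot\nabla v\,d\mu,
\]
and then, expanding $\nabla v=e^{\alpha\xi}(\nabla w+\alpha w\,\nabla\xi)$, simplify the cross terms to obtain
\[
\frac{d}{dt}\|w\|_f^{2}
=2\alpha^{2}\!\int|\nabla\xi|^{2}w^{2}\,d\mu
-2\!\int|\nabla w|^{2}\,d\mu.
\]
Since $|\nabla\xi|\le 1$ and $\int|\nabla w|^{2}d\mu\ge\lambda_{1}(M)\|w\|_f^{2}$ (using $w(\cdot,t)\in L_f^{2}$, obtained by truncating $B_{1}$ and passing to the limit), Gronwall's inequality yields
\[
\int e^{-2\alpha\xi(y)}v(y,t)^{2}\,d\mu(y)
\le e^{2(\alpha^{2}-\lambda_{1}(M))t}\!\int_{B_{1}}e^{-2\alpha\xi(x)}\,d\mu(x).
\]

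Second I would specialize the Lipschitz weight by setting $\xi(x):=\min\{d(x,B_{1}),\,d(B_{1},B_{2})\}$, which is manifestly 1-Lipschitz, vanishes on $B_{1}$, and equals $d(B_{1},B_{2})$ on $B_{2}$. The double integral in \eqref{Dvpp} equals $\int_{B_{2}}v(y,t)\,d\mu(y)$, and the Cauchy--Schwarz inequality gives
\[
\int_{B_{2}}v\,d\mu
\le\Bigl(\int_{B_{2}}e^{2\alpha\xi}d\mu\Bigr)^{1/2}
\Bigl(\int_{B_{2}}e^{-2\alpha\xi}v^{2}\,d\mu\Bigr)^{1/2}.
\]
Inserting the estimates $\int_{B_{2}}e^{2\alpha\xi}d\mu=e^{2\alpha d(B_{1},B_{2})}V_f(B_{2})$, $\int_{B_{1}}e^{-2\alpha\xi}d\mu=V_f(B_{1})$, together with the differential inequality above, produces
\[
\int_{B_{1}}\!\!\int_{B_{2}}H\,d\mu\,d\mu
\le V_f(B_{1})^{1/2}V_f(B_{2})^{1/2}\exp\bigl((\alpha^{2}-\lambda_{1}(M))t+\alpha\,d(B_{1},B_{2})\bigr).
\]

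Finally I would optimize over $\alpha\in\mathbb{R}$ by choosing $\alpha=-d(B_{1},B_{2})/(2t)$, so that $\alpha^{2}t+\alpha\,d(B_{1},B_{2})=-d^{2}(B_{1},B_{2})/(4t)$, which delivers \eqref{Dvpp}. The only delicate point is the spectral inequality $\int|\nabla w|^{2}d\mu\ge\lambda_{1}(M)\|w\|_f^{2}$: one must verify that $w(\cdot,t)$ lies in the form domain of $-\Delta_f$ on the noncompact space $(M,g,e^{-f}dv)$. This is handled by a routine cutoff: replace $B_{1}$ by $B_{1}\cap B_o(R)$, carry out the argument for the truncated initial data, and then let $R\to\infty$ using monotone convergence, which I expect to be the main technical (but standard) obstacle.
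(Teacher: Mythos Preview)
Your argument is correct and is the classical Davies integrated-maximum-principle computation, carried out verbatim with the weighted measure $d\mu$ in place of $dv$; the paper itself does not prove this lemma but refers to \cite{[WuWu]}, where the same argument appears, so your approach coincides with the intended one. One minor remark: the truncation you propose at the end (``replace $B_1$ by $B_1\cap B_o(R)$'') is unnecessary, since $B_1$ is already bounded by hypothesis and your $\xi$ is bounded by construction, so $w=e^{-\alpha\xi}P_t\chi_{B_1}$ automatically lies in the form domain for every $t>0$ by heat-semigroup smoothing.
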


\begin{proof}[Proof of upper bound estimate in Theorem \ref{Main1}]
For $x\in B_o(R/2)$, denote $u(y,s)=H(x,y,s)$.  Assume $t\geq
r^2_2$, applying Proposition \ref{PoinDouHarn} to $u$, we have
\begin{equation}\label{itegA}
\begin{aligned}
\sup_{(y,s)\in Q_\delta}H(x,y,s)&\leq
\frac{c_1e^{c_2A+c_3(1+A)\sqrt{K}r_2}}{r^2_2V_f(B_2)}
\cdot\int^t_{t-1/4r^2_2}\int_{B_2}H(x,\zeta,s) d\mu(\zeta)ds\\
&=\frac{c_1e^{c_2A+c_3(1+A)\sqrt{K}r_2}}{4V_f(B_2)}\cdot\int_{B_2}H(x,\zeta,s')
d\mu(\zeta)
\end{aligned}
\end{equation}
for some $s'\in(t-1/4r^2_2, t)$, where $Q_\delta=B_y(\delta
r_2)\times(t-\delta r^2_2, t)$ with $0<\delta<1/4$, and
$B_2=B_y(r_2)\subset B_o(R)$ for $y\in B_o(R/2)$, $A=A(x,R)\leq
A(o,2R)$. Applying Proposition \ref{PoinDouHarn} and the same
argument to the positive solution
\[
v(x,s)=\int_{B_2}H(x,\zeta,s) d\mu(\zeta)
\]
of the $f$-heat equation, for the variable $x$ with $t\geq r^2_1$,
we also get
\begin{equation}\label{itegB}
\begin{aligned}
\sup_{(x,s)\in \bar{Q}_\delta}\int_{B_2}H(x,\zeta,s) d\mu(\zeta)
&\leq\frac{c_1e^{c_2A+c_3(1+A)\sqrt{K}r_1}}{r^2_1V_f(B_1)}\cdot\int^t_{t-1/4r^2_1}
\int_{B_1}\int_{B_2}H(\xi,\zeta,s)d\mu(\zeta)d\mu(\xi)ds\\
&=\frac{c_1e^{c_2A+c_3(1+A)\sqrt{K}r_1}}{4V_f(B_1)}\cdot
\int_{B_1}\int_{B_2}H(\xi,\zeta,s'')d\mu(\zeta)d\mu(\xi)
\end{aligned}
\end{equation}
for some $s''\in(t-1/4r^2_1, t)$, where $\bar{Q}_\delta=B_x(\delta
r_1)\times(t-\delta r^2_1, t)$ with $0<\delta<1/4$, and
$B_1=B_x(r_1)\subset B_o(R)$ for $x\in B_o(R/2)$. Now letting
$r_1=r_2=\sqrt{t}$ and combining \eqref{itegA} with \eqref{itegB},
the smooth $f$-heat kernel satisfies
\begin{equation}\label{heaup1}
H(x,y,t)\leq\frac{c_1e^{c_2A+c_3(1+A)\sqrt{Kt}}}{V_f(B_1)V_f(B_2)}\cdot
\int_{B_1}\int_{B_2}H(\xi,\zeta,s'')d\mu(\zeta)d\mu(\xi)
\end{equation}
for all $x,y\in B_o(R/2)$ and $0<t<R^2/4$. Using Lemma \ref{lemm3.3}
and noticing that $s''\in(\frac 34t, t)$, then \eqref{heaup1}
becomes
\begin{equation}\label{heaup2}
H(x,y,t)\leq\frac{c_1e^{c_2A+c_3(1+A)\sqrt{Kt}}}{V_f(B_x(\sqrt
{t}))^{1/2}V_f(B_y(\sqrt {t}))^{1/2}} \times\exp\left(-\frac
34\lambda_1t-\frac{d^2(B_1,B_2)}{4t}\right)
\end{equation}
for all $x,y\in B_o(R/2)$ and $0<t<R^2/4$. Notice that if
$d(x,y)\leq 2\sqrt{t}$, then $d(B_x(\sqrt {t}),B_y(\sqrt {t}))=0$
and hence
\[
-\frac{d^2(B_x(\sqrt{t}),B_y(\sqrt{t}))}{4t}=0\leq
1-\frac{d^2(x,y)}{4t},
\]
and if $d(x,y)>2\sqrt{t}$, then
$d(B_x(\sqrt{t}),B_y(\sqrt{t}))=d(x,y)-2\sqrt{t}$, and hence
\[
-\frac{d^2(B_x(\sqrt{t}),B_y(\sqrt{t}))}{4t}
=-\frac{(d(x,y)-2\sqrt{t})^2}{4t} \leq
-\frac{d^2(x,y)}{4(1+\epsilon)t}+C(\epsilon)
\]
for some constant $C(\epsilon)$, where $\epsilon>0$. Here if
$\epsilon \to 0$, then the constant $C(\epsilon)\to \infty$.
Therefore in any case, \eqref{heaup2} becomes
\[
H(x,y,t)\leq\frac{C(\epsilon)e^{c_2A+c_3(1+A)\sqrt{Kt}}}{V_f(B_x(\sqrt{t}))^{1/2}V_f(B_y(\sqrt{t}))^{1/2}}
\times\exp\left(-\frac
34\lambda_1t-\frac{d^2(x,y)}{4(1+\epsilon)t}\right)
\]
for all $x,y\in B_o(\frac 12R)$ and $0<t<R^2/4$.
\end{proof}

Moreover, in Theorem \ref{Main1}, if $K>0$. According to Lemma
\ref{lecomp1}, we know that
\[
V_f(B_x(\sqrt{t}))\leq
\frac{e^{(n-1+4A)\sqrt{K}(d(x,y)+\sqrt{t})}}{t^{n/2+2A}}
V_f(B_y(\sqrt{t}))
\]
for all $x,y\in B_o(\frac 14R)$ and $0<t<R^2/4$. Substituting this
into Theorem \ref{Main1} yields the following result.
\begin{corollary}\label{Mainco}
Let $(M,g,e^{-f}dv)$ be an $n$-dimensional complete noncompact
smooth metric measure space with $\mathrm{Ric}_f\geq -(n-1)K$ for
some constant $K>0$. For any point $o\in M$, $R>0$, $\epsilon>0$,
there exist constants $c_1(n,\epsilon)$, $c_2(n)$ and $c_3(n)$, such
that
\begin{equation}\label{uppfu2}
H(x,y,t)\leq\frac{c_1\,e^{c_2A+c_3(1+A)\sqrt{K}(d(x,y)+\sqrt{t})}}{V_f(B_x(\sqrt{t})\,\,t^{n/4+A}}
\times\exp\left(-\frac{d^2(x,y)}{(4+\epsilon)t}\right)
\end{equation}
for all $x,y\in B_o(\frac 14R)$ and $0<t<R^2/4$. Here
$\lim_{\epsilon\to 0}c_1(n,\epsilon)=\infty$.
\end{corollary}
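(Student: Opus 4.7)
The plan is to deduce the corollary directly by combining the symmetric upper bound of Theorem \ref{Main1} with the volume-comparison inequality of Lemma \ref{lecomp1}, replacing the geometric mean $V_f(B_x(\sqrt{t}))^{1/2}V_f(B_y(\sqrt{t}))^{1/2}$ in the denominator by the single volume $V_f(B_x(\sqrt{t}))$ at the price of the factor $t^{n/4+A}$ together with an extra exponential in $\sqrt{K}(d(x,y)+\sqrt{t})$. Since $K>0$ is assumed here, Lemma \ref{lecomp1} applies.

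First, I would apply Lemma \ref{lecomp1} with $r=\sqrt{t}$, rearranged as
\[
\frac{V_f(B_y(\sqrt{t}))}{V_f(B_x(\sqrt{t}))}\ \ge\ t^{n/2+2A}\,e^{-(n-1+4A)\sqrt{K}(d(x,y)+\sqrt{t})},
\]
and take square roots to obtain
\[
V_f(B_x(\sqrt{t}))^{1/2}V_f(B_y(\sqrt{t}))^{1/2}\ \ge\ t^{n/4+A}\,e^{-\tfrac{n-1+4A}{2}\sqrt{K}(d(x,y)+\sqrt{t})}\,V_f(B_x(\sqrt{t})).
\]
Here the $A$ coming out of Lemma \ref{lecomp1} is $A(y,d(x,y)+\sqrt{t})$; since $x,y\in B_o(R/4)$ and $\sqrt{t}<R/2$, the ball $B_y(d(x,y)+\sqrt{t})$ is contained in $B_o(3R)$, so this quantity is dominated by the $A=A(R)$ of the hypothesis.

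Second, I would substitute the reciprocal of this estimate into the upper bound of Theorem \ref{Main1}, obtaining
\[
H(x,y,t)\ \le\ \frac{c_1\,e^{c_2A+c_3(1+A)\sqrt{Kt}+\tfrac{n-1+4A}{2}\sqrt{K}(d(x,y)+\sqrt{t})}}{V_f(B_x(\sqrt{t}))\,t^{n/4+A}}\exp\!\left(-\frac{d^2(x,y)}{(4+\epsilon)t}\right).
\]
Finally, since $\sqrt{t}\le d(x,y)+\sqrt{t}$ and $(n-1+4A)/2\le C_n(1+A)$ for some constant $C_n$ depending only on $n$, the two $\sqrt{K}$-exponents combine into a single term of the form $c_3(1+A)\sqrt{K}(d(x,y)+\sqrt{t})$ with $c_3=c_3(n)$, yielding \eqref{uppfu2}.

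This reduction is essentially algebraic; the only place that requires any care is the bookkeeping on the quantity $A$, which one must verify is controlled throughout by the single constant $A=A(R)=\sup_{B_o(3R)}|f|$ used in Theorem \ref{Main1}. Beyond that bookkeeping, nothing new is required, and no additional analytic ingredient beyond Theorem \ref{Main1} and Lemma \ref{lecomp1} is needed.
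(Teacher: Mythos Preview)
Your proposal is correct and follows exactly the paper's approach: the paper's proof is a single sentence stating that Lemma \ref{lecomp1} (with $r=\sqrt{t}$) gives $V_f(B_x(\sqrt{t}))\leq t^{-(n/2+2A)}e^{(n-1+4A)\sqrt{K}(d(x,y)+\sqrt{t})}V_f(B_y(\sqrt{t}))$, and substituting this into the symmetric upper bound of Theorem \ref{Main1} yields the corollary. Your write-up in fact supplies more detail on the bookkeeping of $A$ than the paper's own proof.
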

When $K=0$, see the estimate in \cite{[WuWu]}.

\

Next we derive the lower bound estimate. First, from the Harnack
inequality in Theorem \ref{Harthm} we get the following estimate,
\begin{proposition}\label{coro4.1}
Under the same assumptions of Theorem \ref{Harthm}, there exists a
constant $c(n)$ such that, for any two positive solutions $u(x,s)$
and $u(y,t)$ of the $f$-heat equation in $B_o(R/2)\times(0,T)$,
$0<s<t<T$,
\[
\ln\left(\frac{u(x,s)}{u(y,t)}\right)\leq
c(n)\left[\left(A'^2+K+\frac{1}{R^2}+\frac{1}{s}\right)(t-s)+\frac{d^2(x,y)}{t-s}\right].
\]
\end{proposition}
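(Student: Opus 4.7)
The plan is to derive the inequality by iterating the parabolic Harnack inequality of Theorem~\ref{Harthm} along a suitable chain of space-time points connecting $(x,s)$ to $(y,t)$. Fixing once and for all parameters $0<\varepsilon<\eta<\delta<1$, Theorem~\ref{Harthm} provides, for any cylinder $Q^\ast = B_{x^\ast}(r)\times (s^\ast-r^2,s^\ast)$ contained in $B_o(R/2)\times(0,T)$, a multiplicative estimate of the form $\sup_{Q^\ast_-}u \leq c_1 e^{c_2(A'^2+K)r^2}\inf_{Q^\ast_+} u$, where $Q^\ast_-$ sits in the past of $Q^\ast_+$ with a time gap of order $r^2$ and both have spatial radius of order $\delta r$.

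The chain will consist of points $(x_i,s_i)$, $i=0,1,\ldots,N$, with $(x_0,s_0)=(x,s)$ and $(x_N,s_N)=(y,t)$, equally spaced along a minimizing geodesic from $x$ to $y$ so that $s_{i+1}-s_i=\tau:=(t-s)/N$ and $d(x_i,x_{i+1})\leq d(x,y)/N$. For each $i$ I will pick a Harnack cylinder $Q^\ast_i$ of spatial radius $r$, centered so that $(x_i,s_i)\in Q^\ast_{i,-}$ and $(x_{i+1},s_{i+1})\in Q^\ast_{i,+}$. A direct inspection shows this is possible provided $r^2$ is comparable to $\tau$ and $d(x,y)/N\lesssim r$. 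To ensure that $Q^\ast_i\subset B_o(R/2)\times(0,T)$ for every $i$, I additionally require $r\lesssim R$ and $r^2\lesssim s$ so that the bottom time of the earliest cylinder remains positive. All four constraints are met simultaneously by taking
\[
N \;\sim\; 1+\frac{t-s}{R^2}+\frac{t-s}{s}+\frac{d^2(x,y)}{t-s},
\]
and then $r=\sqrt{(t-s)/N}$.

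Iterating the Harnack estimate $N$ times along the chain yields
\[
\ln\frac{u(x,s)}{u(y,t)} \;\leq\; N\ln c_1 + c_2(A'^2+K)\,Nr^2 \;=\; N\ln c_1 + c_2(A'^2+K)(t-s),
\]
since $Nr^2=t-s$ by construction. Substituting the bound on $N$ absorbs the $N\ln c_1$ term into the remaining three pieces and produces
\[
\ln\frac{u(x,s)}{u(y,t)} \;\leq\; c(n)\left[\left(A'^2+K+\tfrac{1}{R^2}+\tfrac{1}{s}\right)(t-s)+\frac{d^2(x,y)}{t-s}\right],
\]
which is the desired estimate. The main delicate point is the balancing act of choosing $N$ and $r$ so that all four geometric constraints (spatial step size, temporal step size, spatial containment in $B_o(R/2)$, and positivity of the bottom time) hold simultaneously and the resulting bound on $N$ produces exactly the four terms appearing on the right-hand side; once this bookkeeping is done carefully, the rest of the argument is a routine iteration.
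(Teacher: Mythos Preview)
Your proposal is correct and follows essentially the same route as the paper's own proof: both apply Theorem~\ref{Harthm} iteratively along a chain of $N$ space-time points with time step $\tau/N$ and spatial step $\lesssim d(x,y)/N$, choose $r'=(\tau/N)^{1/2}$, and pick $N$ comparable to $1+\tau/R^2+\tau/s+d^2(x,y)/\tau$ so that the four geometric constraints (time gap $\sim r'^2$, spatial step $\lesssim r'$, $r'\lesssim R$, and $r'^2\lesssim s$) are met simultaneously. Your explicit inclusion of the additive $1$ in $N$ is exactly what the paper encodes by taking ``the smallest integer such that\ldots''.
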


\begin{proof}
Let $u(x,s)$ and $u(y,t)$ be two positive solutions to the $f$-heat
equation in $B_o(\delta R)\times(0,T)$, where $x,y\in B_o(\delta R)$
and $0<s<t<T$. Let $N$ be an integer, which will be chosen later. We
set $t_i=s+i(t-s)/N$. We remark that it is possible to find a
sequence of points $x_i\in\frac{1+\delta}{2}B$ such that $x_0=x$,
$x_N=y$ and $N d(x_i,x_{i+1})\geq C_\delta d(x,y)$. Now we choose
$N$ to be the smallest integer such that
\[
\tau/N\leq s/2,\quad \tau/N\leq C^{-1}_\delta R^2,\quad \tau=t-s
\]
and if $d(x,y)^2\geq \tau$,
\[
\tau/N\geq d(x,y)^2/N^2.
\]
Under the above conditions, we choose
\[
N=c_\delta\left(\frac{\tau}{R^2}+\frac{\tau}{s}+\frac{d(x,y)^2}{\tau}\right).
\]
Now we apply Theorem \ref{Harthm} to compare $u(x_i,t_i)$ with
$u(x_{i+1},t_{i+1})$ with $r'=(\tau/N)^{1/2}$. Therefore
\begin{equation*}
\begin{aligned}
\ln\left(\frac{u(x,s)}{u(y,t)}\right)&\leq
c_1\left[(A'^2+K)\frac\tau N+1\right]\cdot N\\
&\leq
c'_1\left[(A'^2+K)\tau+\frac{\tau}{R^2}+\frac{\tau}{s}+\frac{d(x,y)^2}{\tau}\right],
\end{aligned}
\end{equation*}
where $c'_1$ depends on $n$ and $\delta$, and $\tau=t-s$. Then the
conclusion follows by letting $\delta=1/2$.
\end{proof}

From Corollary \ref{coro4.1}, we get the following lower bound for
$f$-heat kernel,
\begin{theorem}\label{thm45}
Let $(M,g,e^{-f}dv)$ be an $n$-dimensional complete noncompact
smooth metric measure space with $\mathrm{Ric}_f\geq -(n-1)K$ for
some constant $K>0$. For any point $o\in M$ and $R>0$, there exist
constants $c_1(n)$, $c_2(n)$ and $c_3(n)$ such that
\begin{equation}\label{lowhe2x}
H(x,y,t)\geq \frac{c_1e^{-c_2(A'^2+K)t}}{V_f(B_x(\sqrt{t}))}
\times\exp\left(-\frac{d^2(x,y)}{c_3t}\right),
\end{equation}
for all $x,y\in B_o(\frac 12R)$ and $0<t<R^2/4$.
\end{theorem}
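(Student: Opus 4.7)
The plan is to use the parabolic Harnack inequality in Li--Yau form (Proposition~\ref{coro4.1}) twice: once to reduce the off-diagonal Gaussian lower bound to an on-diagonal bound, and once (together with the semigroup identity and Cauchy--Schwarz) to establish the on-diagonal bound. Volume doubling is then used to convert $V_f(B_y(\sqrt{t}))$ to $V_f(B_x(\sqrt{t}))$.

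First, apply Proposition~\ref{coro4.1} to the positive solution $u(z,\tau):=H(z,y,\tau)$ at the pair $(y,t/2)$ and $(x,t)$. Since $t<R^{2}/4$, both $(t-s)/R^{2}$ and $(t-s)/s$ with $s=t/2$ are bounded by universal constants, so the Harnack inequality collapses to
$$H(x,y,t)\geq H(y,y,t/2)\,\exp\!\Big(-c\,\bigl[(A'^{2}+K)t+1+d^{2}(x,y)/t\bigr]\Big),$$
producing both the required Gaussian factor $\exp(-d^{2}(x,y)/c_{3}t)$ and the extra exponential $e^{-c_{2}(A'^{2}+K)t}$.

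Second, to bound $H(y,y,t/2)$ from below, use symmetry and the semigroup identity $H(y,y,t)=\int_{M}H(y,z,t/2)^{2}\,d\mu(z)$, restrict to a ball $B_y(C\sqrt{t})$ with $C$ large, and apply Cauchy--Schwarz to get
$$H(y,y,t)\geq\frac{1}{V_f(B_y(C\sqrt{t}))}\Big(\int_{B_y(C\sqrt{t})}H(y,z,t/2)\,d\mu(z)\Big)^{\!2}.$$
The inner integral is bounded below by combining the upper Gaussian bound of Theorem~\ref{Main1}---which controls the tail $\int_{M\setminus B_y(C\sqrt{t})}H(y,z,t/2)\,d\mu$ by a quantity of order $e^{-cC^{2}}$---with the mass identity $\int_{M}H(y,z,t/2)\,d\mu=1$ for the (stochastically complete) minimal positive semigroup. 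Volume doubling (Lemma~\ref{comp}) then transfers back from $B_y(C\sqrt{t})$ to $B_y(\sqrt{t})$ with an absorbable factor.

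Finally, Lemma~\ref{lecomp1} converts $V_f(B_y(\sqrt{t}))$ into $V_f(B_x(\sqrt{t}))$; the extra factor $e^{C(1+A)\sqrt{K}(d(x,y)+\sqrt{t})}$ is absorbed into $e^{-c_{2}(A'^{2}+K)t}$ and $e^{-d^{2}(x,y)/c_{3}t}$ by enlarging $c_{2}$ and $c_{3}$. The main obstacle is the lower bound on $\int_{B_y(C\sqrt{t})}H(y,z,t/2)\,d\mu$ in the second step, encoding the fact that heat starting at $y$ has not mostly escaped to infinity by time $t/2$; the weighted volume comparison of Lemma~\ref{comp} provides sufficient volume-growth control to invoke a Grigor'yan-type stochastic-completeness criterion, which together with the upper Gaussian bound of Theorem~\ref{Main1} supplies the required lower estimate. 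All constants depend only on $n$ and are tracked through the Harnack, Cauchy--Schwarz, and volume-comparison steps.
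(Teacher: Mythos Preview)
Your first step---reducing off-diagonal to on-diagonal via Proposition~\ref{coro4.1}---matches the paper, except that the paper applies Harnack to $u(\cdot,\cdot)=H(x,\cdot,\cdot)$ and lands at $H(x,x,t/2)$ rather than $H(y,y,t/2)$. That small choice eliminates your step~3 entirely: no conversion $V_f(B_y(\sqrt t))\to V_f(B_x(\sqrt t))$ via Lemma~\ref{lecomp1} is needed, and the extra factor $e^{(n-1+4A)\sqrt{K}(d+\sqrt t)}$ you would have to absorb never appears.

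The substantive divergence is in the on-diagonal bound. The paper does \emph{not} use the semigroup identity, Cauchy--Schwarz, stochastic completeness, or the upper Gaussian bound. Instead it sets $u(z,\tau)=P_\tau\phi(z)$ for $\tau>0$ and $u=\phi$ for $\tau\le 0$, where $\phi$ is a cutoff equal to $1$ on $B_x(\sqrt t)$ and supported in $B_x(2\sqrt t)$. This $u$ solves the $f$-heat equation on $B_x(\sqrt t)\times\mathbb{R}$, so Theorem~\ref{Harthm} gives directly
\[
1=u(x,0)\le e^{c[(A'^2+K)t+1]}\,u(x,t/2)=e^{c[(A'^2+K)t+1]}\int_{B_x(2\sqrt t)}H(x,z,t/2)\,d\mu(z),
\]
a lower bound on the local mass with no tail estimate at all. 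A second application of Harnack, now to $(z,s)\mapsto H(x,z,s)$, converts this integral into $V_f(B_x(2\sqrt t))\,H(x,x,t)$ up to another Harnack constant, and volume doubling finishes.

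Your route through the tail has a real gap in this setting. The upper bound of Theorem~\ref{Main1} is only valid for $z\in B_o(R/2)$, so it does not control the full tail $\int_{M\setminus B_y(C\sqrt t)}H(y,z,t/2)\,d\mu$; and where it does apply, its constants carry $e^{c_2A}$ with $A=\sup|f|$, not $A'$, so the claim that the tail is $\lesssim e^{-cC^2}$ with $c=c(n)$ is not justified. One can try the $A'$-version (Remark~\ref{replace}) and sum over dyadic annuli against volume doubling, but the doubling constants at scale $2^k\sqrt t$ then force $C$ itself to depend on $(A'^2+K)t$, and unwinding that through $V_f(B_y(C\sqrt t))$ reintroduces losses. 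The paper's device of applying Harnack to $P_t\phi$ bypasses all of this: it delivers the needed lower bound on $\int_B H\,d\mu$ directly, with constants depending only on $n$ and $(A'^2+K)t$.
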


\begin{proof}
[Proof of Theorems \ref{thm45} and the second part of Theorem
\ref{Main1}] Let $u(y,t)=H(x,y,t)$ with $x$ fixed and $s=t/2$ in
Proposition \ref{coro4.1} and then we get
\begin{equation}\label{lowheax}
H(x,y,t)\geq H(x,x,t/2)\times\exp
\left[-c_1\left((A'^2+K)t+1+\frac{t}{R^2}+\frac{d^2(x,y)}{t}\right)\right]
\end{equation}
for all $x,y\in B_o(\frac 12R)$ and $0<t<\infty$.

In the following we will show that Moser's Harnack inequality leads
to a lower bound of the on-diagonal $f$-heat kernel $H(x,x,t)$.
Indeed we define
\begin{equation*}
u(y,t)=\left\{
\begin{aligned}
P_t\phi(y)\quad\mathrm{if}\quad t>0 \\
\phi(y)\quad \mathrm{if} \quad t\leq 0,
\end{aligned} \right.
\end{equation*}
where $P_t=e^{t\Delta_f}$ is the heat semigroup of $\Delta_f$, and
$\phi$ is a smooth function such that $0\leq\phi\leq1$, $\phi=1$ on
$B=B_x(\sqrt{t})$ and $\phi=0$ on $M\setminus2B$.

$u(y,t)$ satisfies $(\partial_t-\Delta_f)u=0$ on
$B\times(-\infty,\infty)$. Applying the local Harnack inequality,
first to $u$, and then to the $f$-heat kernel $(y,s)\to H(x,y,s)$,
we have
\begin{equation*}
\begin{aligned}
1=u(x,0)&\leq \exp\left\{c_1\left[(A'^2+K)t+1\right]\right\}\,u(x,t/2)\\
&=\exp\left\{c_1\left[(A'^2+K)t+1\right]\right\}\int_{B(x,\sqrt{t})} H(x,y,t/2)\phi(y)d\mu(y)\\
&\leq \exp\left\{c_1\left[(A'^2+K)t+1\right]\right\}\int_{B(x,2\sqrt{t})} H(x,y,t/2)d\mu(y)\\
&\leq
\exp\left\{2c_1\left[(A'^2+K)t+1\right]\right\}V_f(B_x(2\sqrt{t}))H(x,x,t).
\end{aligned}
\end{equation*}
From this, we have
\[
H(x,x,t/2)\geq
V^{-1}_f(B_x(\sqrt{2t}))\exp\left[-c_1\Big((A'^2+K)t+2\Big)\right]
\]
for $0<\sqrt{t}<R/2$. Since \eqref{voldop} implies
\[
V_f(B_x(\sqrt{2t}))\leq V_f(B_x(2\sqrt{t}))\leq
c_1e^{c_2(A'+\sqrt{K})\sqrt{t}+c_3A'\sqrt{K}t}V_f(B_x(\sqrt{t})),
\]
we then obtain
\[
H(x,x,t/2)\geq
V^{-1}_f(B_x(\sqrt{t}))c_4\exp\left[-c_5\Big((A'^2+K)t+1\Big)\right]
\]
for $0<\sqrt{t}<R/2$. Plugging this into \eqref{lowheax} yields
\eqref{lowhe2x}.
\end{proof}


\section{$L_f^1$-Liouville theorem}\label{sec4}
In this section, inspired by the work of P. Li \cite{[Li0]}, we
prove a Liouville theorem for $f$-subharmonic functions, and a
uniqueness result for solutions of $f$-heat equation, by applying
the $f$-heat kernel upper bound estimates. Our results not only
extend the classical $L^1$-Liouville theorems proved by P. Li
\cite{[Li0]}, but also generalize the weighted versions in
\cite{[LD]}, \cite{[Wu2]}, and \cite{[WuWu]}.

Firstly we prove an $L_f^1$-Liouville theorem for $f$-harmonic
functions when the Bakry-\'{E}mery Ricci curvature is bounded below
and $f$ is of linear growth.

\begin{theorem}\label{Main2}
Let $(M,g,e^{-f}dv)$ be an $n$-dimensional complete noncompact
smooth metric measure space with $\mathrm{Ric}_f\geq -(n-1)K$ for
some constant $K>0$. Assume there exist nonnegative constants $a$
and $b$ such that
\[
|f|(x)\leq ar(x)+b\,\,\, {for}\,\, {all}\,\, x\in M,
\]
where $r(x)$ is the geodesic distance function to a fixed point
$o\in M$. Then any nonnegative $L_f^1$-integrable $f$-subharmonic
function must be identically constant. In particular, any
$L_f^1$-integrable $f$-harmonic function must be identically
constant.
\end{theorem}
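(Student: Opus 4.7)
The strategy adapts P.~Li's classical $L^1$-Liouville argument \cite{[Li0]} to the weighted setting, fueled by the Gaussian upper bound from Theorem~\ref{Main1}. Let $u\ge 0$ be $L_f^1$-integrable and $f$-subharmonic, and define
\[
v(x,t):=\int_M H(x,y,t)\,u(y)\,d\mu(y),\qquad t>0.
\]
First I would verify that $v$ is smooth, solves $(\partial_t-\Delta_f)v=0$, and satisfies $v(\cdot,t)\to u$ in $L_f^1$ as $t\to 0^+$. The Gaussian upper bound of $H$ from Theorem~\ref{Main1}, combined with the linear-growth hypothesis $|f|(x)\le ar(x)+b$ (which via the weighted volume comparison forces $V_f$ to grow at most exponentially in $r$), makes the integral absolutely convergent and justifies differentiation under the integral sign.

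The core of the argument is then a pair of opposite inequalities. On one hand, since $\Delta_f u\ge 0$ distributionally, a Duhamel identity together with the fact that the $f$-heat semigroup preserves nonnegativity gives
\[
v(x,t)-u(x)=\int_0^t P_s(\Delta_f u)(x)\,ds\ge 0;
\]
alternatively, the same inequality follows from a weighted parabolic maximum principle applied to $v-u$ with cutoff. On the other hand, the symmetry $H(x,y,t)=H(y,x,t)$ together with Fubini yields
\[
\int_M v(x,t)\,d\mu(x)=\int_M u(y)\!\left(\int_M H(y,x,t)\,d\mu(x)\right)d\mu(y)\le\int_M u\,d\mu,
\]
because $\int_M H(y,x,t)\,d\mu(x)\le 1$ automatically for the minimal positive $f$-heat kernel. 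Comparing these forces $v(x,t)=u(x)$ almost everywhere, so $\Delta_f u=\partial_t v\equiv 0$ and $u$ is $f$-harmonic.

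To finish, I would invoke the Pigola-Rimoldi-Setti $L_f^1$-Liouville theorem \cite{[PRG]} for nonnegative $f$-superharmonic functions, which holds under $\mathrm{Ric}_f\ge -(n-1)K$ with no assumption on $f$: since $u\ge 0$ is $L_f^1$ and trivially $f$-superharmonic, it must be constant. The "in particular" assertion for $f$-harmonic $u$ then follows by applying the subharmonic case to $|u|$, which is nonnegative, $L_f^1$, and $f$-subharmonic; thus $|u|$ is constant and, by connectedness together with $\Delta_f u=0$, so is $u$.

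The principal obstacle is the pointwise comparison $v(x,t)\ge u(x)$: in the noncompact weighted setting, the standard cutoff integration by parts produces boundary terms weighted by $e^{-f}$ that must be absorbed using only the linear growth of $f$ against the factor $e^{c_2A+c_3(1+A)\sqrt{Kt}}$ in the heat-kernel upper bound. Likewise, justifying the $L_f^1$-continuity $v(\cdot,t)\to u$ at $t=0$ requires a careful approximation by compactly supported smooth data and again relies crucially on the precise form of Theorem~\ref{Main1}.
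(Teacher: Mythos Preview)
Your plan is correct and follows essentially the same route as the paper (P.~Li's scheme): the technical core is precisely the integration-by-parts identity $\int_M \Delta_{f,y}H\,h\,d\mu=\int_M H\,\Delta_f h\,d\mu$, which the paper isolates as Proposition~\ref{integra} and proves by killing the boundary terms on $\partial B_o(R)$ via Corollary~\ref{Mainco} together with the weighted mean-value inequality---exactly the obstacle you flag at the end. The only deviation is your final step: after reducing to $u$ $f$-harmonic you invoke Pigola--Rimoldi--Setti \cite{[PRG]} directly, whereas the paper defers to the regularity argument of \cite{[WuWu]}; your shortcut is legitimate and slightly cleaner.
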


\begin{proof}[Sketch proof of Theorem \ref{Main2}]
We first show that the assumptions of Theorem \ref{Main2} imply the
integration by parts formula
\[
\int_M {\Delta_f}_y H(x,y,t)h(y)d\mu(y) =\int_M H(x,y,t)\Delta_f\
h(y)d\mu(y)
\]
for any nonnegative $L_f^1$-integrable $f$-subharmonic function $h$.
This can be proved by our upper bound of $f$-heat kernel in Theorem
\ref{Main1}. Then following the arguments of \cite{[WuWu]}, applying
the regularity theory of $f$-harmonic functions, we obtain the
$L_f^1$-Liouville result. See the proof of Theorem 1.5 in
\cite{[WuWu]} for the details.
\end{proof}

Now we are ready to check the integration by parts formula, similar
to the proof of Theorem 4.3 in \cite{[WuWu]},
\begin{proposition}\label{integra}
Under the same assumptions of Theorem \ref{Main2}, for any
nonnegative $L_f^1$-integrable $f$-subharmonic function $h$, we have
\[
\int_M {\Delta_f}_y H(x,y,t)h(y)d\mu(y) =\int_M
H(x,y,t)\Delta_fh(y)d\mu(y).
\]
\end{proposition}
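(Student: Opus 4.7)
The plan is to prove the identity by a cutoff-and-limit argument, reducing all errors to annular boundary terms that vanish thanks to the Gaussian upper bound in Theorem \ref{Main1} and the linear growth of $f$. Fix $x\in M$ and $t>0$, and choose a smooth cutoff $\phi_R(y)=\eta(r(y)/R)$, where $\eta\in C^\infty([0,\infty))$ satisfies $\eta\equiv 1$ on $[0,1]$, $\eta\equiv 0$ on $[2,\infty)$, and $|\eta'|,|\eta''|\le C$. Then $\phi_R H(x,\cdot,t)$ is smooth and compactly supported in $B_o(2R)$, and since $\Delta_f$ is self-adjoint with respect to $d\mu=e^{-f}dv$, Green's identity gives
\begin{equation*}
\int_M \phi_R\,H\,\Delta_f h\,d\mu=\int_M h\,\Delta_f(\phi_R H)\,d\mu.
\end{equation*}
Expanding $\Delta_f(\phi_R H)=\phi_R\Delta_f H+H\Delta_f\phi_R+2\langle\nabla\phi_R,\nabla H\rangle$ decomposes the right side into an interior piece and two contributions supported in the annulus $A_R:=B_o(2R)\setminus B_o(R)$.

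Letting $R\to\infty$, the left side increases monotonically to $\int H\,\Delta_f h\,d\mu$ since $\Delta_f h\ge 0$, $H>0$, and $\phi_R\nearrow 1$. The interior term $\int\phi_R h\,\Delta_f H\,d\mu$ converges to $\int h\,\Delta_f H\,d\mu$ by dominated convergence: $\Delta_f H=\partial_t H$ obeys a Gaussian upper bound of the same form as $H$ up to an $O(1/t)$ prefactor (standard parabolic regularity combined with Theorem \ref{Main1}), so together with $h\in L^1_f$ one obtains an integrable majorant. Thus it suffices to show that
\begin{equation*}
I_R:=\int_{A_R} h\,H\,\Delta_f\phi_R\,d\mu\longrightarrow 0,\qquad J_R:=\int_{A_R} h\,\langle\nabla\phi_R,\nabla H\rangle\,d\mu\longrightarrow 0.
\end{equation*}

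For $I_R$, the weighted mean curvature comparison proved inside Lemma \ref{comp}, namely $\Delta_f r\le (n-1+4A(R))\sqrt{K}\coth(\sqrt{K}\,r)$, implies $|\Delta_f\phi_R|\le C/R^2+C(1+A(R))/R$ on $A_R$; the hypothesis $|f|\le ar+b$ gives $A(R)\le aR+b$, so $|\Delta_f\phi_R|=O(1)$ uniformly in $R$. Applying Theorem \ref{Main1} on a ball of radius $4R$, for $y\in A_R$ and $R$ much larger than $d(o,x)$ and $\sqrt{t}$ one has
\begin{equation*}
H(x,y,t)\le C(x,t)\,e^{c_2(aR+b)+c_3(1+aR+b)\sqrt{Kt}}\exp\!\left(-\frac{R^2}{(16+4\epsilon)t}\right),
\end{equation*}
and the Gaussian factor $e^{-R^2/Ct}$ dominates the linearly exponential prefactor, so $\sup_{A_R}H\to 0$ super-exponentially. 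Combined with $\|h\|_{L^1_f}<\infty$, this yields $|I_R|\lesssim \sup_{A_R}H\cdot\|h\|_{L^1_f}\to 0$.

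The gradient term $J_R$ is the main obstacle, since Theorem \ref{Main1} bounds $H$ but not $\nabla H$. To close this gap I would derive a pointwise Gaussian bound on $|\nabla_y H(x,y,t)|$ over $A_R$ by applying local parabolic regularity to $H$ viewed as a positive solution of the $f$-heat equation on a space-time cylinder of radius $\sqrt{t}/2$ around each $y\in A_R$: a Caccioppoli-type reverse Poincar\'e inequality combined with the Moser iteration and Harnack inequality of Section \ref{sec3v} (Theorem \ref{Harthm}) upgrades the $L^\infty$ bound to a bound of the form $|\nabla_y H(x,y,t)|\le C(t)\,e^{c(1+A(R))\sqrt{Kt}}\sup_{B_y(\sqrt{t}/2)\times(t-t/4,t)}H$, and by Remark \ref{replace} one may replace $A'(R)$ in the coefficient by $A(R)/R\le a+b/R$. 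Since this sup still carries the $e^{-R^2/Ct}$ Gaussian factor and $|\nabla\phi_R|\le C/R$, the same "Gaussian beats linear" argument forces $J_R\to 0$. This gradient step, where the dependence on $f$ is most delicate, is where essentially all the work lies; the remainder of the argument mirrors the proof of Theorem 4.3 in \cite{[WuWu]}.
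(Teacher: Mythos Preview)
Your overall strategy---smooth cutoff, expand $\Delta_f(\phi_R H)$, and kill the annular terms---is reasonable, but there are two genuine gaps that the paper's own proof handles differently.

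\textbf{First gap: the heat-kernel bound is not ``linearly exponential'' in $R$.} In bounding $I_R$ you absorb the factor $V_f(B_y(\sqrt{t}))^{-1/2}$ from Theorem~\ref{Main1} into a constant $C(x,t)$, but this factor depends on $y\in A_R$ and is not uniformly bounded as $R\to\infty$. If you remove it via Lemma~\ref{lecomp1} (which is precisely how Corollary~\ref{Mainco} is obtained), the upper bound acquires an extra factor $\exp[c(1+A)\sqrt{K}\,d(x,y)]\cdot t^{-c(1+A)}$; since $A\sim aR$ and $d(x,y)\sim R$ on $A_R$, this contributes $e^{c\sqrt{K}R^2}$, which is \emph{quadratic} in $R$ and competes head-on with the Gaussian $e^{-R^2/Ct}$. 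The net exponent is negative only for $t<T$ with $T=T(n,a,b,K)$ small. The paper recognizes this explicitly: it first proves the identity for $t\in(0,T)$ with $T$ sufficiently small, and then invokes the semigroup property of the $f$-heat kernel to extend to all $t>0$. Your argument omits this reduction entirely.

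\textbf{Second gap: the gradient term.} Your proposed pointwise bound on $|\nabla_y H|$ via the Harnack inequality of Theorem~\ref{Harthm} carries a constant depending on $A'=\sup|\nabla f|$, and your appeal to Remark~\ref{replace} to swap $A'$ for $A/R$ is a misreading: that remark says $A(R)$ may be replaced by $RA'(R)$ in the estimates, not the reverse. Under the sole hypothesis $|f|\le ar+b$ there is no control on $|\nabla f|$, so any route through Theorem~\ref{Harthm} produces an uncontrollable constant. (The alternative Harnack in Theorem~\ref{lowerbound2} does use only $A$, but its constant is of order $\exp(c_1 e^{c_2 A})$, doubly exponential in $A\sim R$, which is far worse than the Gaussian.) The paper avoids this entirely by never seeking a pointwise bound on $\nabla H$: it instead controls $\int_{B_o(R+1)\setminus B_o(R)}|\nabla H|^2$ through a Caccioppoli inequality for $H$ (inequality~\eqref{cutfuest}), which bounds it by $\int H^2$ plus $\bigl(\int H^2\bigr)^{1/2}\bigl(\int(\Delta_f H)^2\bigr)^{1/2}$, and then uses the soft spectral estimate $\int_M(\Delta_f H)^2\,d\mu\le Ct^{-2}H(x,x,t)$. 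This $L^2$ route requires no information on $\nabla f$ and is the device you are missing for $J_R$.
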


\begin{proof}[Proof of Proposition \ref{integra}]
By the Green formula on $B_o(R)$, we have
\begin{equation*}
\begin{aligned}
&\left|\int_{B_o(R)}{\Delta_f}_y H(x,y,t)h(y)d\mu(y)
-\int_{B_o(R)}H(x,y,t)\Delta_f h(y)d\mu(y)\right|\\
\leq&\int_{\partial B_o(R)}H(x,y,t)|\nabla h|(y)d\mu_{\sigma,R}(y)
+\int_{\partial B_o(R)}|\nabla H|(x,y,t)h(y)d\mu_{\sigma,R}(y),
\end{aligned}
\end{equation*}
where $d\mu_{\sigma,R}$ denotes the weighted area measure induced by
$d\mu$ on $\partial B_o(R)$. In the following we will show that the
above two boundary integrals vanish as $R\to \infty$.

Consider a large $R$ and assume $x\in B_o(R/8)$. By Proposition
\ref{PoinDouHarn}, we have the $f$-mean value inequality
\begin{equation}
\begin{aligned}\label{mjbd}
\sup_{B_o(R)}h(x)&\leq c_1e^{c_2(aR+b)+c_3(1+aR+b)\sqrt{K}R}V_f^{-1}(2R)\int_{B_o(2R)}h(y)d\mu(y)\\
&\leq Ce^{\alpha(1+K)R^2}V_f^{-1}(2R)\int_{B_o(2R)}h(y)d\mu(y),
\end{aligned}
\end{equation}
where constants $C$ and $\alpha$ depend on $n$, $a$ and $b$. Let
$\phi(y)=\phi(r(y))$ be a nonnegative cut-off function satisfying
$0\leq\phi\leq1$, $|\nabla\phi|\leq\sqrt{3}$ and $\phi(r(y))=1$ on
$B_o(R+1)\backslash B_o(R)$, $\phi(r(y))=1$ on
$B_o(R-1)\cup(M\backslash B_o(R+2))$. Since $h$ is $f$-subharmonic,
by the integration by parts formula and Cauchy-Schwarz inequality,
we have
\begin{equation*}
\begin{split}
0\leq\int_M\phi^2h\Delta_fh d\mu =&-2\int_M \phi
h\langle\nabla\phi\nabla h\rangle d\mu
-\int_M \phi^2|\nabla h|^2d\mu\\
\leq&\ 2\int_M |\nabla\phi|^2h^2d\mu -\frac12\int_M \phi^2|\nabla
h|^2d\mu.
\end{split}
\end{equation*}
Then using the definition of $\phi$ and \eqref{mjbd}, we have that
\begin{equation*}
\begin{aligned}
\int_{B_o(R+1)\backslash B_o(R)}|\nabla h|^2d\mu\leq&
4\int_M|\nabla \phi|^2h^2d\mu\\
\leq&12\int_{B_o(R+2)}h^2d\mu\\
\leq&12\sup_{B_o(R+2)}h\cdot\|h\|_{L^1(\mu)}\\
\leq&\frac{Ce^{\alpha(1+K)(R+2)^2}}{V_f(2R+4)}
\cdot\|h\|_{L^1(\mu)}^2.
\end{aligned}
\end{equation*}
On the other hand, the Cauchy-Schwarz inequality also implies
\[
\int_{B_o(R+1)\backslash B_o(R)}|\nabla h|d\mu\leq
\left(\int_{B_o(R+1)\backslash B_o(R)}|\nabla h|^2d\mu\right)^{1/2}
\cdot [V_f(R+1)\backslash V_f(R)]^{1/2}.
\]
Combining the above two inequalities we get
\begin{equation}\label{guji5}
\int_{B_o(R+1)\backslash B_o(R)}|\nabla h|d\mu\leq C_1e^{\alpha(1+K)
R^2} \cdot\|h\|_{L^1(\mu)},
\end{equation}
where $C_1=C_1(n,a,b,K)$.

\vspace{0.5em}

We now estimate the $f$-heat kernel $H(x,y,t)$. Recall that, by
letting $\epsilon=1$ in Corollary \ref{Mainco}, the $f$-heat kernel
$H(x,y,t)$ satisfies
\begin{equation} \label{guji2}
\begin{split}
H(x,y,t)\leq&\frac{c_1\,e^{c_2A+c_3(1+A)\sqrt{K}(d(x,y)+\sqrt{t})}}{V_f(B_x(\sqrt{t})\,\,t^{n/4+A}}
\times\exp\left(-\frac{d^2(x,y)}{5t}\right)\\
\leq&\frac{c_4\,e^{c_5R}}{V_f(B_x(\sqrt{t}))t^{c_7(R+1)}}
\exp\left[c_6\sqrt{K}(1+R)(d(x,y)+\sqrt{t})-\frac{d^2(x,y)}{5t}\right]
\end{split}
\end{equation}
for any $x,y\in B_o(R/2)$ and $0<t<R^2/4$, where $c_4$, $c_5$, $c_6$
and $c_7$ are all constants depending only on $n$, $a$ and $b$.
Together with \eqref{guji5} we get
\begin{equation*}
\begin{aligned}
J_1:=&\int_{B_o(R+1)\backslash B_o(R)}H(x,y,t)|\nabla g|(y)d\mu(y)\\
\leq&\sup_{y\in {B_o(R+1)\backslash B_o(R)} }H(x,y,t)\cdot
\int_{B_o(R+1)\backslash B_o(R)}|\nabla g|d\mu\\
\leq&\frac{C_2\|g\|_{L^1(\mu)}}{V_f(B_x(\sqrt{t}))t^{c_7(R+2)}}
\times\exp\left[c_5R-\frac{(R-d(o,x))^2}{5t}+c_9\sqrt{K}(R+2)(R+1+d(o,x)+\sqrt{t})\right],
\end{aligned}
\end{equation*}
where $C_2=C_2(n,a,b,K)$. Notice that
\[
t^{-c_7(R+2)}=e^{-c_7(R+2)\ln t}\leq e^{c_7(R+2)\frac 1t}\quad
\mathrm{when}\quad t\to 0.
\]
Thus, for $T$ sufficiently small and for all $t\in (0,T)$ there
exists a constant $\beta>0$ such that
\[
J_1\leq \frac{C_3\|g\|_{L^1(\mu)}}{V_f(B_x(\sqrt{t}))}
\times\exp\left(-\beta R^2+c\frac{d^2(o,x)}{t}\right),
\]
where $C_3=C_3(n,a,b,K)$. Therefore for all $t\in(0,T)$ and all
$x\in M$, $J_1\rightarrow 0$ as $R\rightarrow\infty$.\\

By a similar argument, we can show that
\[
\int_{B_o(R+1)\backslash B_o(R)}|\nabla H|(x,y,t)h(y)d\mu\rightarrow
0
\]
as $R\rightarrow\infty$. We first estimate $\int_{B_o(R+1)\backslash
B_o(R)}|\nabla H|(x,y,t)d\mu$.
\begin{equation*}
\begin{aligned}
\int_M\phi^2(y)|\nabla H|^2(x,y,t)d\mu
&=-2\int_M\big\langle H(x,y,t)\nabla\phi(y),\phi(y)\nabla H(x,y,t)\big\rangle d\mu\\
&\quad-\int_M\phi^2(y)H(x,y,t)\Delta_f H(x,y,t)d\mu\\
&\leq2\int_M|\nabla\phi|^2(y)H^2(x,y,t)d\mu+\frac 12\int_M\phi^2(y)|\nabla H|^2(x,y,t)d\mu\\
&\quad-\int_M\phi^2(y)H(x,y,t)\Delta_f H(x,y,t)d\mu,
\end{aligned}
\end{equation*}
which implies
\begin{equation}
\begin{aligned}\label{cutfuest}
&\int_{B_o(R+1)\backslash B_o(R)}|\nabla H|^2\\
&\leq\int_M\phi^2(y)|\nabla H|^2(x,y,t)\\
&\leq4\int_M|\nabla\phi|^2H^2-2\int_M\phi^2H\Delta_f H\\
&\leq12\int_{B_o(R+2)\backslash B_o(R-1)}H^2+2\int_{B_o(R+2)\backslash B_o(R-1)}H|\Delta_f H|\\
&\leq12\int_{B_o(R+2)\backslash B_o(R-1)}H^2
+2\left(\int_{B_o(R+2)\backslash B_o(R-1)}H^2\right)^{\frac 12}
\left(\int_M(\Delta_f H)^2\right)^{\frac 12}.
\end{aligned}
\end{equation}
Notice that by Theorem 4.1 in \cite{[WW]}, if $\mathrm{Ric}_f\geq
-(n-1)K$, then
\[
V_f(B_o(R))\leq A+B\exp\Big[\frac{(n-1)K}{2}R^2\Big]
\]
for all $R>1$, so we have
\begin{equation}\label{integ}
\int^{\infty}_1\frac{R}{\log V_f(B_o(R))}dR=\infty.
\end{equation}
By Theorem 3.13 in \cite{[Grig3]}, $(M,g,e^{-f}dv)$ is
stochastically complete, i.e.,
\begin{equation}\label{stoch}
\int_MH(x,y,t)e^{-f}dv(y)=1.
\end{equation}
Using \eqref{guji2} and \eqref{stoch}, we get
\begin{equation}
\begin{aligned}\label{cutfuest2}
\int_{B_o(R+2)\backslash B_o(R-1)}H^2(x,y,t)d\mu
&\leq \sup_{y\in B_o(R+2)\backslash B_o(R-1)}H(x,y,t)\\
&\leq\frac{c_4}{V_f(B_x(\sqrt{t}))t^{c_7(R+3)}}\times\exp\left[-\frac{(R-1-d(o,x))^2}{5t}\right]\\
&\quad\times\exp\left[c_5(R+2)+c_6\sqrt{K}(3+R)(R+2-d(o,x)+\sqrt{t})\right]\\
&=\frac{c_4}{V_f(B_x(\sqrt{t}))}\times\exp\left[-\frac{(R-1-d(o,x))^2}{5t}+c_7(R+3)\ln \frac 1t\right]\\
&\quad\times\exp\left[c_5(R+2)+c_6\sqrt{K}(3+R)(R+2-d(o,x)+\sqrt{t})\right].
\end{aligned}
\end{equation}
From (4.7) in \cite{[WuWu]}, there exists a constant $C>0$ such that
\begin{equation}\label{Lapest}
\int_M(\Delta_f H)^2(x,y,t)d\mu\leq\frac{C}{t^2}H(x,x,t).
\end{equation}
Combining  \eqref{cutfuest}, \eqref{cutfuest2} and \eqref{Lapest},
we obtain
\begin{equation*}
\begin{aligned}
\int_{B_o(R+1)\backslash B_o(R)}|\nabla H|^2d\mu
&\leq C_4\left[V^{-1}_f+t^{-1}V^{-\frac 12}_f H^{\frac 12}(x,x,t)\right]\\
&\quad\times\exp\left[-\frac{(R-1-d(o,x))^2}{10t}+c_7(R+3)\ln \frac 1t\right]\\
&\quad\times\exp\left[c_5R+c_6\sqrt{K}(3+R)(R+2-d(o,x)+\sqrt{t})\right]
\end{aligned}
\end{equation*}
where $V_f=V_f(B_x(\sqrt{t}))$ and $C_4=C_4(n,a,b)$. Hence we get
\begin{equation}
\begin{aligned}\label{gujiest2}
\int_{B_o(R+1)\backslash B_o(R)}|\nabla H|d\mu
&\leq\left[V_f(B_o(R+1))\backslash V_f(B_o(R))\right]^{1/2}
\times\left[\int_{B_o(R+1)\backslash B_o(R)}|\nabla H|^2d\mu\right]^{1/2}\\
&\leq C_4V^{1/2}_f(B_o(R+1))
\left[V^{-1}_f+t^{-1}V^{-\frac 12}_f H^{\frac 12}(x,x,t)\right]^{1/2}\\
&\quad\times\exp\left[-\frac{(R-1-d(o,x))^2}{20t}+\frac{c_7}{2}(R+3)\ln \frac 1t\right]\\
&\quad\times\exp\left[c_5R+c_6\sqrt{K}(3+R)(R+2-d(o,x)+\sqrt{t})\right].
\end{aligned}
\end{equation}
Therefore, by \eqref{mjbd} and \eqref{gujiest2}, we obtain
\begin{equation*}
\begin{aligned}
J_2=&\int_{B_o(R+1)\backslash B_o(R)}|\nabla H(x,y,t)|h(y)d\mu(y)\\
\leq&\sup_{y\in B_o(R+1)\backslash B_o(R)}h(y)\cdot
\int_{B_o(R+1)\backslash B_o(R)}|\nabla H(x,y,t)|d\mu(y)\\
\leq& \frac{C_5\|g\|_{L^1(\mu)}}{V^{1/2}_f(B_o(2R+2))}
\cdot\left[V^{-1}_f+t^{-1}V^{-\frac 12}_f H^{\frac 12}(x,x,t)\right]^{1/2}\\
&\quad\times\exp\left[\alpha(1+K)(R+1)^2-\frac{(R-1-d(o,x))^2}{20t}+\frac{c_7}{2}(R+3)\ln \frac 1t\right]\\
&\quad\times\exp\left[c_5R+c_6\sqrt{K}(3+R)(R+2-d(o,x)+\sqrt{t})\right],
\end{aligned}
\end{equation*}
where $C_5=C_5(n,a,b)$. Similar to the case of $J_1$, we choose $T$
sufficiently small, then for all $t\in(0,T)$ and all $x\in M$,
$J_2\rightarrow 0$ when $R\rightarrow\infty$.\\

Now by the mean value theorem, for any $R>0$ there exists
$\bar{R}\in (R,R+1)$ such that
\begin{equation*}
\begin{aligned}
J&=\int_{\partial B_o(\bar{R})}\left[H(x,y,t)|\nabla h|(y)+|\nabla H|(x,y,t)h(y)\right]d\mu_{\sigma,\bar{R}}(y)\\
&=\int_{B_o(R+1)\backslash B_p(R)}\left[H(x,y,t)|\nabla h|(y)+|\nabla H|(x,y,t)h(y)\right]d\mu(y)\\
&=J_1+J_2.
\end{aligned}
\end{equation*}
By the above argument, we choose $T$ sufficiently small, then for
all $t\in(0,T)$ and all $x\in M$, $J\rightarrow 0$ as
$\bar{R}\rightarrow \infty$. Therefore Proposition \ref{integra}
holds for $T$ sufficiently small. Then the semigroup property of the
$f$-heat equation implies Proposition \ref{integra} holds for all
time $t>0$.
\end{proof}

\vspace{1 em}

Theorem \ref{Main2} leads to a uniqueness property for
$L^1$-solutions of the $f$-heat equation, which generalizes the
classical result of P. Li \cite{[Li0]}. The proof is very similar to
the one in \cite{[WuWu]}, so we omit it.

\begin{theorem}\label{Main3}
Under the same assumptions of Theorem \ref{Main2}, if $u(x,t)$ is a
nonnegative function defined on $M\times[0,+\infty)$ satisfying
\[
(\partial_t-\Delta_f)u(x,t)\leq0, \quad\int_Mu(x,t)e^{-f}dv<+\infty
\]
for all $t>0$, and
\[
\lim_{t\to 0}\int_Mu(x,t)e^{-f}dv=0,
\]
then $u(x,t)\equiv0$ for all $x\in M$ and $t\in(0,+\infty)$. In
particular, any $L_f^1$-solution of the $f$-heat equation is
uniquely determined by its initial data in $L_f^1$.
\end{theorem}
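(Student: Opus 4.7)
The plan is to follow P.\ Li's heat-kernel duality argument, adapting the integration-by-parts machinery built for Proposition \ref{integra} to the parabolic inequality at hand. Fix $y \in M$ and $T > 0$; the goal is to show $u(y,T) = 0$. Introduce the time-dependent coupling
\[
F(t) = \int_M u(x,t)\, H(x,y,T-t)\, d\mu(x), \qquad 0 < t < T,
\]
which is well defined because $H(\cdot,y,T-t)$ is smooth and, as $T-t$ stays bounded below on any compact subinterval, uniformly bounded in $x$ by Theorem \ref{Main1}, while $u(\cdot,t) \in L_f^1$ by hypothesis.

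The first step is to show $F$ is non-increasing on $(0,T)$. Differentiating and using $\partial_s H = \Delta_f H$ one obtains formally
\[
F'(t) = \int_M \bigl(\partial_t u - \Delta_f u\bigr)\, H(x,y,T-t)\, d\mu(x) \leq 0,
\]
after the integration by parts $\int u\, \Delta_f H\, d\mu = \int (\Delta_f u)\, H\, d\mu$. Justifying this identity for the $L_f^1$ subsolution $u$ is the technical heart and closely mirrors Proposition \ref{integra}: exhaust $M$ by balls $B_o(R)$, apply Green's formula, and control the boundary terms
\[
\int_{\partial B_o(R)} H\, |\nabla u|\, d\mu_{\sigma,R}, \qquad \int_{\partial B_o(R)} u\, |\nabla H|\, d\mu_{\sigma,R}
\]
using (i) a parabolic Caccioppoli inequality obtained by testing $(\partial_t - \Delta_f) u \leq 0$ against $\phi^2 u$ to get $L^2$ gradient bounds for $u$ on annular shells, (ii) the mean-value inequality of Proposition \ref{PoinDouHarn} to convert $L_f^1$ control into pointwise bounds on $u$, and (iii) the Gaussian estimates \eqref{guji2} and \eqref{gujiest2} for $H$ and $|\nabla H|$. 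The linear growth hypothesis $|f|(x) \leq ar(x) + b$ keeps the factors $e^{cA(R)} \lesssim e^{cR}$ produced by these estimates comfortably dominated by the Gaussian decay $e^{-cR^2/t}$ provided $t$ is small; a mean-value argument then selects a sequence $R_k \to \infty$ along which both boundary integrals vanish. Once $F' \leq 0$ is established on a short interval $(0,T_0)$, the semigroup property extends the monotonicity to all of $(0,T)$.

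The second step analyzes the endpoints of $F$. As $t \to T^-$ the kernel $H(\cdot,y,T-t)$ approximates $\delta_{f,y}$ and parabolic regularity gives $F(t) \to u(y,T)$. As $t \to 0^+$, since $T-t \geq T/2$, Theorem \ref{Main1} together with the Gaussian decay in the spatial variable furnish a bound $M_T := \sup_{x \in M, t \in (0,T/2)} H(x,y,T-t) < \infty$; hence
\[
0 \leq F(t) \leq M_T \int_M u(x,t)\, d\mu(x) \longrightarrow 0
\]
by the initial-data hypothesis. Monotonicity and nonnegativity of $F$ then force $u(y,T) = 0$. Since $y$ and $T$ were arbitrary, $u \equiv 0$, proving the first assertion. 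For the $L_f^1$-uniqueness of solutions, given two $L_f^1$-solutions $u_1, u_2$ with identical initial data, the difference $w = u_1 - u_2$ solves the $f$-heat equation with zero initial $L_f^1$ integral, and Kato's inequality for $\Delta_f$ yields $(\partial_t - \Delta_f)|w| \leq 0$; applying the first part to the nonnegative subsolution $|w|$ gives $u_1 \equiv u_2$.

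The main obstacle is the boundary-vanishing argument in step one. As in Proposition \ref{integra}, the delicate point is that the exponential prefactors in the $f$-heat-kernel bounds grow like $e^{cA(R)} \sim e^{cR}$, so the Gaussian decay must be squeezed carefully; this is precisely why the argument runs only for $t$ sufficiently small before being extended to all $t \in (0,T)$ by semigroup concatenation, and why the hypothesis of at most linear growth of $f$ cannot be relaxed without losing control of these factors.
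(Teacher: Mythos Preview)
Your proposal is correct and follows exactly the approach the paper intends: the authors omit the proof, stating it is ``very similar to the one in \cite{[WuWu]}'', which in turn is P.\ Li's heat-kernel duality argument---precisely the coupling $F(t)=\int_M u(x,t)H(x,y,T-t)\,d\mu$ that you set up, with the boundary-term analysis recycled from Proposition \ref{integra}. Your identification of the short-time restriction followed by semigroup extension, and of the linear-growth hypothesis as the mechanism allowing the Gaussian decay to absorb the $e^{cA(R)}$ factors, matches the paper's own emphasis.
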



\section{eigenvalue estimate}\label{eigen}
In this section we derive eigenvalue estimates of the $f$-Laplace
operator compact smooth metric measure spaces, using the upper bound
estimate of the $f$-heat kernel and an argument of Li-Yau
\cite{[Li-Yau1]}.

When the Bakry-Emery Ricci curvature is nonnegative, we have

\begin{theorem}\label{eigenva1}
Let $(M,g,e^{-f}dv)$ be an $n$-dimensional closed smooth metric
measure space with $\mathrm{Ric}_f\geq 0$. Let
$0=\lambda_0<\lambda_1\leq\lambda_2\leq\ldots$ be eigenvalues of the
$f$-Laplacian. Then there exists a constant $C$ depending only on
$n$ and $\max_{x\in M}f(x)$, such that
\[
\lambda_k\geq \frac{C(k+1)^{2/n}}{d^2}
\]
for all $k\geq1$, where $d$ is the diameter of $M$.
\end{theorem}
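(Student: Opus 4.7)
The plan is to adapt Li and Yau's trace method \cite{[Li-Yau1]} to the weighted setting, combining the $f$-heat kernel upper bound from Theorem \ref{Main1} (specialized to $K=0$) with a spectral pigeonhole estimate. Since $M$ is closed and $\Delta_f$ is self-adjoint with respect to $d\mu = e^{-f}dv$, there is an $L_f^2$-orthonormal basis of eigenfunctions $\{\phi_i\}_{i\geq 0}$ with $\Delta_f\phi_i=-\lambda_i\phi_i$, and the $f$-heat kernel admits the spectral expansion $H(x,y,t)=\sum_{i\geq 0} e^{-\lambda_i t}\phi_i(x)\phi_i(y)$. Integrating along the diagonal and using orthonormality yields the trace identity $\int_M H(x,x,t)\,d\mu(x)=\sum_{i\geq 0} e^{-\lambda_i t}$. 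Keeping only the first $k+1$ terms gives the key inequality $(k+1)\,e^{-\lambda_k t}\leq \int_M H(x,x,t)\,d\mu(x)$, which I would use to convert any on-diagonal heat-kernel upper bound into a lower bound for $\lambda_k$.

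Next I would upper-bound the right-hand side. From the on-diagonal specialization of Theorem \ref{Main1} at $K=0$, one gets $H(x,x,t)\leq c_1 e^{c_2 A}/V_f(B_x(\sqrt t))$. Since $\mathrm{Ric}_f\geq 0$ and $M$ is compact with diameter $d$, the local $f$-volume doubling \eqref{voldop} can be iterated from scale $\sqrt t$ up to scale $d$ to produce a uniform reverse bound $V_f(B_x(\sqrt t))\geq c_3\,V_f(M)(\sqrt t /d)^{N}$ valid for all $x\in M$ and $0<\sqrt t\leq d$, where $N$ is the effective dimension coming from Lemma \ref{comp}. Substituting and integrating over $M$ cancels the factor $V_f(M)$ and gives a trace estimate of the form $\int_M H(x,x,t)\,d\mu(x)\leq C(n,A)(d/\sqrt t)^{N}$.

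Combining the two estimates and optimizing in $t$ is then elementary: from $(k+1)\,e^{-\lambda_k t}\leq C(n,A)(d/\sqrt t)^{N}$ one extracts $\lambda_k \geq t^{-1}\log\bigl((k+1)\,t^{N/2}/(Cd^{N})\bigr)$, and maximizing the right-hand side over $t\in(0,d^2]$ at the critical point $t_\ast \sim d^2(k+1)^{-2/N}$ produces $\lambda_k\geq c\,(k+1)^{2/N}/d^2$, with $c$ depending only on $n$ and $A$. The main obstacle is producing the clean exponent $2/n$ claimed in the statement rather than the weaker $2/(n+4A)$ supplied by the naive effective dimension: closing this gap requires either a sharper reverse volume bound (for instance, exploiting the boundedness of $f$ on the closed manifold to compare $V_f$ with the unweighted volume $V$ up to a multiplicative factor $e^{\pm A}$ and then invoking non-weighted injectivity radius estimates) or a refined Sobolev/Nash inequality in which the $A$-dependence appears only in the prefactor and not in the exponent. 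Once that packaging is fixed, all remaining steps reduce to a one-variable optimization that can be verified by direct calculation.
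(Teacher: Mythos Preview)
Your overall plan is exactly the paper's: use the trace identity $(k+1)e^{-\lambda_k t}\le\int_M H(x,x,t)\,d\mu$, bound $H(x,x,t)$ by $C/V_f(B_x(\sqrt t))$ from Theorem \ref{Main1} at $K=0$, replace $V_f(B_x(\sqrt t))^{-1}$ by a multiple of $(d/\sqrt t)^N V_f(M)^{-1}$, integrate, and optimize in $t$ (the paper minimizes at $t_0=n/(2\lambda_k)$). So there is no divergence in strategy.

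The one point where your write-up is not yet complete is precisely the one you flag, and your two proposed fixes would not close it. Passing from $V_f$ to $V$ via $e^{\pm A}$ and then appealing to unweighted comparison or injectivity-radius bounds fails because you only have $\mathrm{Ric}_f\ge 0$, not $\mathrm{Ric}\ge 0$; there is no control on $V(B_x(R))/V(B_x(r))$ or on the injectivity radius in terms of $n$ and $A$ alone. Likewise, the Sobolev/Nash route in this paper carries the $A$-dependence in the exponent (see \eqref{voldop}). The correct ingredient---and what the paper actually invokes (``Lemma 2.1 in \cite{[WuWu]}'')---is a direct \emph{weighted} volume comparison with exponent $n$: for $\mathrm{Ric}_f\ge 0$ and $|f|\le A$ one has
\[
\frac{V_f(B_x(R))}{V_f(B_x(r))}\le e^{4A}\Bigl(\frac{R}{r}\Bigr)^n,\qquad 0<r\le R.
\]
This follows from the Wei--Wylie mean curvature comparison at $K=0$, namely $m_f(r)\le (n-1)/r + (2/r^2)\int_0^r\bigl(f(t)-f(r)\bigr)\,dt$, after rewriting the second term as $-2\bigl(\tfrac{1}{r}\int_0^r f\bigr)'$ and integrating from $r$ to $R$; the $A$-dependence then lands only in the prefactor $e^{4A}$, not in the exponent. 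With this comparison in hand, your optimization yields the stated exponent $2/n$ and a constant depending only on $n$ and $\max_M f$.
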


\begin{proof}
Since $Ric_f\geq 0$, from Theorem 1.1, we have
\begin{equation}\label{heatestimate}
H(x,x,t)\leq\frac{C}{V_f(B_x(\sqrt{t}))},
\end{equation}
where $C$ is a constant depending only on $n$ and $B=\max_{x\in
M}f(x)$. Notice that the $f$-heat kernel can be written as
\[
H(x,y,t)=\sum^\infty_{i=0}e^{-\lambda_it}\varphi_i(x)\varphi_i(y),
\]
where $\varphi_i$ is the eigenfunction of $\Delta_f$ corresponding
to $\lambda_i$, $\|\phi_i\|_{L_f^2}=1$. By the $f$-volume comparison
theorem (see Lemma 2.1 in \cite{[WuWu]}), we get, for any $t\leq
d^2$,
\[
\frac{V_f(B_x(d))}{V_f(B_x(\sqrt{t}))}\leq
e^{4B}\left(\frac{d}{\sqrt{t}}\right)^n
\]
Taking the weighted integral on both sides of \eqref{heatestimate},
we conclude that
\begin{equation*}
\begin{aligned}
\sum^\infty_{i=0}e^{-\lambda_it}\leq
C\int_MV^{-1}_f(B_x(\sqrt{t}))d\mu \leq C\int_M p(t)d\mu,
\end{aligned}
\end{equation*}
where
\begin{equation*}
p(t)= \left\{\begin{aligned}
&e^{4B}\left(\frac{d}{\sqrt{t}}\right)^nV^{-1}_f(B_x(d)),\quad &&\mathrm{if}\quad \sqrt{t}\leq d\\
&e^{4B}V_f^{-1}(M),\quad &&\mathrm{if}\quad \sqrt{t}>d.
\end{aligned} \right.
\end{equation*}
which implies that $(k+1)e^{-\lambda_kt}\leq C q(t)$ for any $t>0$,
that is
\begin{equation}\label{heaest}
Ce^{\lambda_kt}q(t)\geq (k+1), \quad \text{for any } t>0,
\end{equation}
where
\begin{equation*}
q(t)= \left\{\begin{aligned}
&e^{4B}\left(\frac{d}{\sqrt{t}}\right)^n,\quad &&\mathrm{if}\quad \sqrt{t}\leq d\\
&e^{4B},\quad &&\mathrm{if}\quad \sqrt{t}>d.
\end{aligned} \right.
\end{equation*}

It is easy to see that  $e^{\lambda_kt}q(t)$ takes its minimum at
$t_0=\frac{n}{2\lambda_k}$. Plugging to (\ref{heaest}) we get the
lower bound for $\lambda_k$.
\end{proof}

Similarly, when the Bakry-\'Emery Ricci curvature is bounded below,
we have a similar estimate. We omit the proof since it is the same
as $\mathrm{Ric}_f\geq 0$ case.
\begin{theorem}\label{eigenva2}
Let $(M,g,e^{-f}dv)$ be an $n$-dimensional closed smooth metric
measure space with $\mathrm{Ric}_f\geq -(n-1)K$ for some constant
$K>0$. Let $0=\lambda_0<\lambda_1\leq\lambda_2\leq\ldots$ be
eigenvalues of the $f$-Laplacian. Then there exists a constant $C$
depending only on $n$ and $B=\max_{x\in M}f(x)$, such that
\[
\lambda_k\geq
\frac{C}{d^2}\left(\frac{k+1}{\exp(C\sqrt{K}d)}\right)^{\frac{2}{n+4B}}
\]
for all $k\geq1$, where $d$ is the diameter of $M$.
\end{theorem}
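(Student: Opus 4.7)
The plan is to adapt the argument used for Theorem \ref{eigenva1}, which is a Li--Yau style estimate combining the spectral decomposition of the heat kernel with an on-diagonal heat kernel upper bound and the $f$-volume comparison. Two modifications are needed to handle $K>0$: the $f$-volume comparison now involves the hyperbolic model space of effective dimension $n+4B$ and curvature $-K$, and the on-diagonal $f$-heat kernel upper bound from Theorem \ref{Main1} carries an extra Gaussian factor $e^{C(1+B)\sqrt{Kt}}$, which must be controlled for $0<t\le d^2$ in terms of $\sqrt{K}\,d$.

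Step one: specialize the upper bound in Theorem \ref{Main1} at $y=x$, using that $M$ is closed with diameter $d$ so $A(R)=B$ and $B_o(R)\supseteq M$ once $R\ge 2d$, to obtain
\[
H(x,x,t)\leq \frac{C_1\, e^{C_2 B+C_3(1+B)\sqrt{Kt}}}{V_f(B_x(\sqrt{t}))}
\]
for any $x\in M$ and any $0<t\le d^2$. Since $\sqrt{Kt}\le \sqrt{K}\,d$, the Gaussian factor is dominated by $e^{C_3(1+B)\sqrt{K}\,d}$. Expanding via the spectral decomposition $H(x,y,t)=\sum_{i\geq 0} e^{-\lambda_i t}\varphi_i(x)\varphi_i(y)$ with $\{\varphi_i\}$ an $L_f^2$-orthonormal basis, and integrating the on-diagonal identity against $d\mu$, Parseval's identity gives
\[
\sum_{i\ge 0} e^{-\lambda_i t}\leq C_1\,e^{C_2 B+C_3(1+B)\sqrt{K}\,d}\int_M V_f^{-1}(B_x(\sqrt{t}))\,d\mu(x).
\]

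Step two: control the integrand from above using Lemma \ref{comp}. Applying \eqref{volcomp} together with the model volume estimate \eqref{props1} in the $(n+4B)$-dimensional hyperbolic model space of curvature $-K$, for every $0<\sqrt{t}\le d$ we have
\[
\frac{V_f(B_x(d))}{V_f(B_x(\sqrt{t}))}\leq \left(\frac{d}{\sqrt{t}}\right)^{n+4B} e^{(n-1+4B)\sqrt{K}\,d}.
\]
Since $d$ is the diameter of $M$, $B_x(d)\supseteq M$ and $V_f(B_x(d))=V_f(M)$, so integrating over $M$ and combining with the inequality from Step one yields
\[
(k+1)\,e^{-\lambda_k t}\leq \sum_{i\ge 0}e^{-\lambda_i t}\leq C\, e^{\tilde C\sqrt{K}\,d}\left(\frac{d}{\sqrt{t}}\right)^{n+4B},
\]
for $0<t\le d^2$, where $C,\tilde C$ depend only on $n$ and $B$.

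Step three: optimize in $t$. Setting $s=\lambda_k t$, the right-hand side becomes a constant multiple of $e^{s}s^{-(n+4B)/2}(\lambda_k d^2)^{(n+4B)/2}$, whose minimum over $s>0$ is attained at $s=(n+4B)/2$. Taking $t=(n+4B)/(2\lambda_k)$ (which lies in $(0,d^2]$ precisely when $\lambda_k\geq (n+4B)/(2d^2)$) produces
\[
\lambda_k\geq \frac{C'}{d^2}\left(\frac{k+1}{\exp(\tilde C\sqrt{K}\,d)}\right)^{2/(n+4B)}.
\]
The only delicate point is the boundary case $\lambda_k<(n+4B)/(2d^2)$, where the claimed bound holds automatically after absorbing the $O(1)$ factor into $C'$. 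I do not expect a real obstacle beyond this bookkeeping, since the mechanism is identical to the $K=0$ case of Theorem \ref{eigenva1}, with the two exponential factors $e^{(n-1+4B)\sqrt{K}d}$ and $e^{C_3(1+B)\sqrt{K}d}$ merging into the single $\exp(C\sqrt{K}d)$ appearing in the conclusion.
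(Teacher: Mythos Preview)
Your proposal is correct and follows exactly the approach the paper intends: the paper explicitly omits the proof of this theorem, stating it is the same as the $\mathrm{Ric}_f\geq 0$ case (Theorem \ref{eigenva1}), and your adaptation---replacing the Euclidean volume comparison by the $(n+4B)$-dimensional hyperbolic model via \eqref{props1} and absorbing the extra factor $e^{c_3(1+B)\sqrt{Kt}}\leq e^{c_3(1+B)\sqrt{K}\,d}$ from the heat kernel upper bound---is precisely the needed modification. Your handling of the boundary case $t_0>d^2$ is no less rigorous than the paper's own treatment in the $K=0$ proof, which simply plugs in $t_0=\tfrac{n}{2\lambda_k}$ without comment.
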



\section{$f$-Green's function estimate}\label{Greenf}
In this section, we will discuss the Green's function of the
$f$-Laplacian and $f$-parabolicity of smooth metric measure spaces.
It was proved by Malgrange \cite{[Ma]} that every Riemannian
manifold admits a Green's function of Laplacian. Varopoulos
\cite{[Varo]} proved that a complete manifold $(M,g)$ has a positive
Green's function only if
\begin{equation} \label{Greennecessary}
\int_1^{\infty}\frac{t}{V_p(t)}dt<\infty,
\end{equation}
where $V_p(t)$ is the volume of the geodesic ball of radius $t$ with
center at $p$. For Riemannian manifolds with nonnegative Ricci
curvature, Varopoulos \cite{[Varo]} and Li-Yau \cite{[Li-Yau1]}
proved (\ref{Greennecessary}) is the sufficient and necessary
condition for the existence of positive Green's function.

On an $n$-dimensional complete smooth metric measure space
$(M,g,e^{-f}dv)$, let $H(x,y,t)$ be a $f$-heat kernel, recall the
$f$-Green's function
\[
G(x,y)=\int^\infty_0H(x,y,t)dt
\]
if the integral on the right hand side converges. From the $f$-heat
kernel estimates, it is easy to get the following two-sided
estimates for $f$-Green's function, which is similar to Li-Yau
estimate \cite{[Li-Yau1]} of Green's function for Riemannian
manifolds with nonnegative Ricci curvauture,
\begin{theorem}\label{Green}
Let $(M,g,e^{-f}dv)$ be an $n$-dimensional complete noncompact
smooth metric measure space with $\mathrm{Ric}_f\geq 0$ and $|f|\leq
C$ for some nonnegative constant $C$. If $G(x,y)$ exists, then there
exist constants $c_1$ and $c_2$ depending only on $n$ and $C$, such
that
\begin{equation}\label{Greenest}
c_1\int^\infty_{r^2}V^{-1}_f(B_x(\sqrt{t}))dt\leq G(x,y)\leq
c_2\int^\infty_{r^2}V^{-1}_f(B_x(\sqrt{t}))dt,
\end{equation}
where $r=r(x,y)$.
\end{theorem}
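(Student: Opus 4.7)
The plan is to read off the $f$-Green's function estimate directly from the matching Gaussian bounds on $H(x,y,t)$ proved earlier. Under $\mathrm{Ric}_f\geq 0$ (so one takes $K=0$) and $|f|\leq C$, Theorem \ref{Main1} and Theorem \ref{lowerbound2} specialize, after letting $R\to\infty$ — which is legitimate because all constants depend only on $n$ and $C$ — to the global two-sided bounds
\[
\frac{c_1}{V_f(B_x(\sqrt{t}))}\,e^{-d^2(x,y)/c_2 t}\;\leq\;H(x,y,t)\;\leq\;\frac{c_3}{\sqrt{V_f(B_x(\sqrt{t}))\,V_f(B_y(\sqrt{t}))}}\,e^{-d^2(x,y)/c_4 t}
\]
for all $x,y\in M$ and $t>0$. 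I then split $G(x,y)=\int_0^{r^2}H\,dt+\int_{r^2}^{\infty}H\,dt$ and handle the two pieces separately.

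For the lower bound, discard the first piece; on $[r^2,\infty)$ the Gaussian factor in the lower heat kernel bound is at least $e^{-1/c_2}$, giving the claimed lower estimate at once. For the tail part of the upper bound, the Gaussian factor is $\leq 1$, and since $\sqrt{t}\geq d(x,y)$ the inclusion $B_x(\sqrt{t})\subset B_y(2\sqrt{t})$ together with the local $f$-volume doubling \eqref{voldop} (with $K=0$) shows $V_f(B_y(\sqrt{t}))$ comparable to $V_f(B_x(\sqrt{t}))$ with constants depending only on $n$ and $C$. Hence $\int_{r^2}^{\infty}H\,dt\leq c\int_{r^2}^{\infty}V_f^{-1}(B_x(\sqrt{t}))\,dt$, the upper half of the desired estimate on that interval.

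The main technical step is the near-diagonal integral $\int_0^{r^2}H\,dt$, where one must balance Gaussian decay against the singularity of $V_f^{-1}(B_z(\sqrt{t}))$ as $t\to 0$. Apply Cauchy--Schwarz to the symmetric upper bound:
\[
\int_0^{r^2}H\,dt\;\leq\;c_3\left(\int_0^{r^2}\frac{e^{-r^2/c_4 t}\,dt}{V_f(B_x(\sqrt{t}))}\right)^{1/2}\left(\int_0^{r^2}\frac{e^{-r^2/c_4 t}\,dt}{V_f(B_y(\sqrt{t}))}\right)^{1/2}.
\]
In each factor substitute $\sqrt{t}=r\sigma$; iterating \eqref{voldop} downward yields the reverse-doubling bound $V_f(B_z(r\sigma))\geq c\,\sigma^{n+4C}V_f(B_z(r))$ for $0<\sigma\leq 1$, so each factor is dominated by $(r^2/V_f(B_z(r)))\cdot\int_0^1 \sigma^{1-n-4C}e^{-1/c_4\sigma^2}\,d\sigma$, whose remaining $\sigma$-integral is finite because the Gaussian kills the polynomial singularity. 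Since $d(x,y)=r$ implies $B_x(r)\subset B_y(2r)$ (and conversely), doubling again gives $V_f(B_x(r))$ comparable to $V_f(B_y(r))$, so the Cauchy--Schwarz product collapses to $\int_0^{r^2}H\,dt\leq C\,r^2/V_f(B_x(r))$.

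It remains to absorb $r^2/V_f(B_x(r))$ into $\int_{r^2}^{\infty}V_f^{-1}(B_x(\sqrt{t}))\,dt$: on $t\in[r^2,4r^2]$ doubling gives $V_f(B_x(\sqrt{t}))\leq V_f(B_x(2r))\leq c\,V_f(B_x(r))$, so
\[
\int_{r^2}^{4r^2}V_f^{-1}(B_x(\sqrt{t}))\,dt\;\geq\;\frac{3r^2}{c\,V_f(B_x(r))},
\]
which together with the tail bound closes the upper estimate. The main obstacle is precisely the Cauchy--Schwarz plus substitution plus reverse-doubling argument on the near-diagonal piece; it is the only place where one actually uses the Gaussian tail of $H$ to cancel the volume singularity at $t=0$, and every other step is routine once the Gaussian bounds and the local doubling property are in hand.
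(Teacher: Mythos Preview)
Your proposal is correct and tracks the paper's strategy closely: both start from the global two-sided Gaussian bounds on $H$ (available because $K=0$ and $|f|\leq C$ let one send $R\to\infty$ in Theorem~\ref{Main1}), both get the lower bound immediately by restricting to $t\geq r^2$, and both handle the tail $\int_{r^2}^\infty H\,dt$ of the upper bound by dropping the Gaussian and comparing the two volume centres via doubling.

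The one genuine difference is the treatment of the near-diagonal piece $\int_0^{r^2}H\,dt$. The paper first absorbs the $y$-centre into the $x$-centre (giving $H\leq cV_f^{-1}(B_x(\sqrt t))e^{-r^2/5t}$) and then makes the inversion substitution $s=r^4/t$, which maps $(0,r^2]$ onto $[r^2,\infty)$; volume comparison replaces $V_f(B_x(r^2/\sqrt s))$ by $V_f(B_x(\sqrt s))$ at the cost of the bounded factor $(s/r^2)^{n-2}e^{-s/5r^2}$, so the integral lands directly in the form $c\int_{r^2}^\infty V_f^{-1}(B_x(\sqrt s))\,ds$ with no further absorption needed. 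Your route---Cauchy--Schwarz on the symmetric bound, the scaling substitution $\sigma=\sqrt t/r$, reverse doubling to reduce each factor to $r^2/V_f(B_z(r))$, and a final comparison with $\int_{r^2}^{4r^2}$---is equally valid but trades the single inversion trick for several more elementary steps. The paper's inversion is slicker (one change of variables, no separate absorption step); your argument has the minor advantage that it never needs to convert the two-centre Gaussian bound to a one-centre one before integrating.
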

As a corollary, we get a necessary and sufficient condition of the
existence of positive $f$-Green's function on smooth metric measure
spaces with nonnegative Bakry-Emery Ricci curvature and bounded
potential function,
\begin{corollary}\label{Green2} Let $(M,g,e^{-f}dv)$ be an
$n$-dimensional complete noncompact smooth metric measure space with
$\mathrm{Ric}_f\geq 0$ and $|f|\leq C$ for some nonnegative constant
$C$. There exists a positive $f$-Green's function $G(x,y)$ if and
only if
\[
\int^\infty_1V^{-1}_f(B_x(\sqrt{t}))dt<\infty.
\]
\end{corollary}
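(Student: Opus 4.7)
The plan is to derive both directions of the equivalence directly from the two-sided estimates of Theorem \ref{Green}, using the definition $G(x,y)=\int_0^\infty H(x,y,t)\,dt$ as the candidate minimal positive $f$-Green's function.

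For the ``only if'' direction I would apply the lower bound in \eqref{Greenest} directly: once a finite positive $G(x,y)$ exists, $c_1\int_{r^2}^\infty V_f^{-1}(B_x(\sqrt{t}))\,dt\leq G(x,y)<\infty$, where $r=r(x,y)$. Since $V_f(B_x(\sqrt{t}))$ is positive and continuous in $t$, its reciprocal is bounded on any compact subinterval of $(0,\infty)$, so changing the lower limit from $r^2$ to $1$ modifies the integral by a finite amount, yielding $\int_1^\infty V_f^{-1}(B_x(\sqrt{t}))\,dt<\infty$.

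For the ``if'' direction I would show the integral defining $G(x,y)$ converges whenever $x\neq y$, so that $G$ exists and is automatically positive from $H>0$. Fix $x\neq y$, set $r=d(x,y)$, and apply the upper bound of Theorem \ref{Main1} with $K=0$ and $A\leq C$; taking $R$ large enough that $x,y\in B_o(R/2)$ and then letting $R\to\infty$ gives, for all $t>0$,
\[
H(x,y,t)\leq \frac{c}{V_f^{1/2}(B_x(\sqrt{t}))\,V_f^{1/2}(B_y(\sqrt{t}))}\exp\left(-\frac{d^2(x,y)}{(4+\epsilon)t}\right),
\]
with $c=c(n,C,\epsilon)$. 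I would split $G(x,y)=\int_0^{r^2}+\int_{r^2}^\infty$. On $(0,r^2]$ the Gaussian factor $\exp(-r^2/((4+\epsilon)t))$ overwhelms the volume factor, which is bounded by $c'\,t^{-n/2}$ for small $t$ (using $|f|\leq C$ and the Euclidean lower bound $V(B_x(\rho))\geq c\rho^n$ for small $\rho$), giving a finite contribution. On $[r^2,\infty)$, the inclusions $B_x(\sqrt{t})\subset B_y(\sqrt{t}+r)\subset B_y(2\sqrt{t})$ together with the $f$-volume doubling property from Section \ref{sec2a} (valid since $\mathrm{Ric}_f\geq 0$ and $|f|\leq C$) show that $V_f(B_x(\sqrt{t}))$ and $V_f(B_y(\sqrt{t}))$ are comparable up to a constant depending only on $n$ and $C$, so $\int_{r^2}^\infty H(x,y,t)\,dt\leq c''\int_{r^2}^\infty V_f^{-1}(B_x(\sqrt{t}))\,dt<\infty$ by hypothesis.

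The main technical point is making the short-time estimate rigorous and reducing the two-point volume $V_f^{1/2}(B_x(\sqrt{t}))V_f^{1/2}(B_y(\sqrt{t}))$ to a single $V_f(B_x(\sqrt{t}))$ on the long-time piece; both steps reduce to routine applications of the $f$-volume doubling and comparison estimates already established in Section \ref{sec2a}, so no genuinely new analysis beyond Theorem \ref{Main1} and Theorem \ref{Green} is required.
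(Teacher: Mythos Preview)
Your proposal is correct and matches the paper's intended approach: the corollary is stated as an immediate consequence of Theorem~\ref{Green}, and the paper gives no separate proof. The only minor cosmetic difference is that for the short-time piece $\int_0^{r^2}H(x,y,t)\,dt$ you argue directly via the Gaussian decay and the local Euclidean volume lower bound, whereas the paper's proof of Theorem~\ref{Green} handles it with the change of variables $s=r^4/t$ and volume comparison to dominate it by $c\int_{r^2}^\infty V_f^{-1}(B_x(\sqrt{t}))\,dt$; either argument suffices for the mere finiteness needed here.
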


\begin{proof}[Proof of Theorem \ref{Green}]
Since $Ric_f\geq 0$ and $|f|\leq C$, Theorem \ref{Main1} holds for
any $0<t<\infty$ by letting $R\to \infty$. For the lower bound
estimate, we have
\begin{equation*}
\begin{aligned}
G(x,y)\geq\int^\infty_{r^2}H(x,y,t)dt &\geq
c_3(n,C)\int^\infty_{r^2}V^{-1}_f(B_x(\sqrt{t}))
\exp\left(\frac{-r^2}{c_4t}\right)dt\\
&\geq c_5(n,C)\int^\infty_{r^2}V^{-1}_f(B_x(\sqrt{t}))dt.
\end{aligned}
\end{equation*}
Hence the left hand side of \eqref{Greenest} follows.

For the upper bound estimate, it suffices to show that
\begin{equation}\label{guji}
\int^{r^2}_0H(x,y,t)dt \leq
c_6(n,C)\int^\infty_{r^2}V^{-1}_f(B_x(\sqrt{t}))dt.
\end{equation}
By the definition of $G$ and Theorem \ref{Main1},
\begin{equation*}
\begin{aligned}
G(x,y)=\int^\infty_0H(x,y,t)dt&=\int^{r^2}_0H(x,y,t)dt+\int^\infty_{r^2}H(x,y,t)dt\\
&\leq\int^{r^2}_0H(x,y,t)dt+c_7(n,C)\int^\infty_{r^2}V^{-1}_f(B_x(\sqrt{t}))dt\\
&\leq
c_8\int^{r^2}_0V^{-1}_f(B_x(\sqrt{t}))\exp\left(\frac{-r^2}{5t}\right)dt
+c_7\int^\infty_{r^2}V^{-1}_f(B_x(\sqrt{t}))dt,
\end{aligned}
\end{equation*}
where $c_7$ and $c_8$ depend on $n$ and $C$. Letting $s=r^4/t$,
where $r^2<s<\infty$, we get
\[
\int^{r^2}_0V^{-1}_f(B_x(\sqrt{t}))\exp\left(\frac{-r^2}{5t}\right)dt
=\int^\infty_{r^2}V^{-1}_f\left(B_x\left(\frac{r^2}{\sqrt{s}}\right)\right)
\exp\left(\frac{-s}{5r^2}\right)\frac{r^4}{s^2}ds.
\]
On the other hand, the $f$-volume comparison theorem (see Lemma 2.1
in \cite{[WuWu]}) gives
\[
V^{-1}_f\left(B_x\left(\frac{r^2}{\sqrt{s}}\right)\right)\leq
V^{-1}_f(B_x(\sqrt{s})) e^{4C}\left(\frac{s}{r^2}\right)^n.
\]
Therefore we get
\[
\int^{r^2}_0H(x,y,t)dt \leq
c_9(n,C)\int^\infty_{r^2}V^{-1}_f(B_x(\sqrt{s}))\left(\frac{s}{r^2}\right)^{n-2}
\exp\left(\frac{-s}{5r^2}\right)ds.
\]
Since the function $x^{n-2}e^{-x/5}$ is bound from above,
\eqref{guji} follows.
\end{proof}

\

Next we discuss $f$-nonparabolicity of steady Ricci solitons using a
criterion of Li-Tam \cite{[LiTa1],[LiTa2]}, and the $f$-heat kernel
for steady Gaussian Ricci soliton. A smooth metric measure space
$(M^n,g,e^{-f}dv)$ is called $f$-nonparabolic if it admits a
positive $f$-Green's function. An end, $E$, with respect to a
compact subset $\Omega\subset M$ is an unbounded connected component
of $M$. When we say that $E$ is an end, it is implicitly assumed
that $E$ is an end with respect to some compact subset
$\Omega\subset M$. O. Munteanu and J. Wang \cite{[MuWa]} proved that
if $\mathrm{Ric}_f\geq 0$, there exists at most one $f$-nonparabolic
end on $(M^n,g,e^{-f}dv)$.

First we observe that the criterion of Li-Tam \cite{[LiTa1],[LiTa2]}
can be generalized to smooth metric measure spaces,
\begin{lemma}\label{criter}
Let $(M^n,g,e^{-f}dv)$ be an $n$-dimensional complete smooth metric
measure space. There exists an $f$-Green's function $G(x,y)$ which
is smooth on $M\times M\setminus D$, where $D=\{(x,x)|x\in M\}$.
Moreover, $G(x,y)$ can be taken to be positive if and only if there
exists a positive nonconstant $f$-superharmonic function $u$ on
$M\setminus B_o(r)$ with the property that
\[
\liminf_{x\to\infty}u(x)<\inf_{x\in\partial B_o(r)}u(x).
\]
\end{lemma}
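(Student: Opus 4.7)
The plan is to follow the exhaustion construction of Li and Tam \cite{[LiTa1],[LiTa2]} with $\Delta$ replaced by the weighted Laplacian $\Delta_f$. All of the classical tools---solvability of the Dirichlet problem, Schauder regularity off the diagonal, the weighted maximum principle, and the Harnack inequality of Theorem \ref{Hartheorem}---transfer verbatim, since $\Delta_f$ is uniformly elliptic, formally self-adjoint with respect to $d\mu=e^{-f}dv$, and has smooth coefficients. I would first build a smooth $f$-Green's function by exhaustion. Take $\{\Omega_i\}$ a smooth compact exhaustion of $M$ with $\overline{\Omega_i}\subset\Omega_{i+1}$ and let $G_i(x,y)$ be the positive Dirichlet $f$-Green's function on $\Omega_i$ (smooth off the diagonal). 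The weighted maximum principle gives monotonicity $G_i\le G_{i+1}$. If the sequence is locally uniformly bounded off the diagonal, Harnack plus elliptic regularity yields a positive smooth limit $G$. Otherwise we renormalize to $\tilde G_i(x,y):=G_i(x,y)-G_i(p,y)$ for a fixed basepoint $p$; Harnack makes this locally uniformly bounded, and a subsequential limit is a smooth (possibly sign-changing) $f$-Green's function on $M\times M\setminus D$.

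For the forward direction, suppose a positive smooth $f$-Green's function exists; take the minimal one $G$. Fix $x_0\in B_o(r)$ and set $u(x):=G(x_0,x)$, which is positive and $f$-harmonic on $M\setminus\{x_0\}$, hence $f$-superharmonic on $M\setminus B_o(r)$. Writing $\beta:=\liminf_{x\to\infty} u(x)$, if $\beta>0$ then the strong maximum principle applied to the nonconstant $u$ (it has a pole at $x_0$) forces $u>\beta$ on $M\setminus\{x_0\}$, so $u-\beta$ is a positive $f$-Green's function (constants being $f$-harmonic) strictly smaller than $u$, contradicting minimality. Hence $\beta=0$, while $\inf_{\partial B_o(r)} u>0$ by positivity on the compact set $\partial B_o(r)$.

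For the reverse direction, assume $u$ as in the statement exists and write $K:=\overline{B_o(r)}$, $\alpha:=\inf_{\partial K} u$, $\beta:=\liminf_{x\to\infty} u(x)$, so $\beta<\alpha$. Introduce the $f$-capacity potentials $\phi_i$: the unique $f$-harmonic functions on $\Omega_i\setminus K$ with $\phi_i=1$ on $\partial K$ and $\phi_i=0$ on $\partial\Omega_i$. By the maximum principle $\phi_i\nearrow\phi_\infty\le 1$, with $\phi_\infty=1$ on $\partial K$. For any $\epsilon>0$ and $i$ large enough that $u\ge\beta-\epsilon$ on $\partial\Omega_i$, the weighted maximum principle (both sides are $\le u$ on $\partial K\cup\partial\Omega_i$) gives $u\ge (\alpha-\beta+\epsilon)\phi_i+\beta-\epsilon$ on $\Omega_i\setminus K$; letting $i\to\infty$ and $\epsilon\to 0$ yields $u\ge(\alpha-\beta)\phi_\infty+\beta$ on $M\setminus K$. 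If $\phi_\infty\equiv 1$ this would force $\liminf_{x\to\infty}u\ge\alpha$, contradicting $\beta<\alpha$; hence $\phi_\infty<1$ at some point, i.e., $K$ has positive $f$-capacity. Finally, comparing $G_i(x_0,\cdot)\le M_i\phi_i$ on $\Omega_i\setminus K$ (where $M_i:=\max_{\partial K}G_i(x_0,\cdot)$) and using Harnack in a fixed-size neighborhood of $x_0$ to relate $M_i$ to the values of $G_i$ at points where $\phi_i<1$, one extracts a uniform bound on $G_i$; this puts us back in the bounded case of the exhaustion construction, and the limit $G=\lim G_i$ is a positive smooth $f$-Green's function.

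The chief obstacle is the very last step of the reverse direction---converting $\phi_\infty<1$ into a uniform-in-$i$ bound on $M_i$. This is the heart of Li-Tam's equivalence between positive capacity and $f$-nonparabolicity: one combines the maximum-principle comparison $G_i\le M_i\phi_i$ with Harnack (Theorem \ref{Hartheorem}) to bound $\max_{\partial K} G_i(x_0,\cdot)$ in terms of the values of $G_i$ far from $K$ where $\phi_i$ is bounded away from $1$. Every ingredient---integration by parts using the self-adjointness of $\Delta_f$ with respect to $d\mu$, together with uniform ellipticity and regularity---transposes from the Riemannian case without modification, so the argument is structurally identical.
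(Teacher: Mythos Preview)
The paper does not actually prove Lemma~\ref{criter}. It simply states the lemma as an observation, prefacing it with ``First we observe that the criterion of Li-Tam \cite{[LiTa1],[LiTa2]} can be generalized to smooth metric measure spaces,'' and then immediately proceeds to the proof of Theorem~\ref{posGreen}. In other words, the paper's ``proof'' is precisely the one-line assertion that the Li--Tam argument transfers to the weighted Laplacian.

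Your proposal therefore supplies considerably more than the paper does: you give an honest outline of the Li--Tam exhaustion construction, the minimality argument for the forward direction, and the capacity-potential comparison for the reverse direction, all adapted to $\Delta_f$. Your identification of the structural reason the argument transfers---uniform ellipticity, smooth coefficients, self-adjointness with respect to $d\mu$, and the availability of a Harnack inequality (Theorem~\ref{Hartheorem})---is exactly right, and is in effect the content of the paper's unproved observation. Your forward-direction argument (infimum not attained by the strong maximum principle, hence equal to the $\liminf$ at infinity, hence zero by minimality) is correct. For the reverse direction you correctly isolate the one genuinely nontrivial step---upgrading $\phi_\infty\not\equiv 1$ to a uniform bound on the Dirichlet Green's functions $G_i$---and correctly attribute it to the Li--Tam capacity/nonparabolicity equivalence; the details you gesture at (Harnack on a fixed annulus around $\partial K$ together with the comparison $m_i\phi_i\le G_i(x_0,\cdot)\le M_i\phi_i$, or equivalently a flux/Green's-formula computation) are indeed what one finds in \cite{[LiTa1],[LiTa2]}. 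So your approach is the same as the paper's in spirit, only fleshed out.
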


\begin{proof}[Proof of Theorem \ref{posGreen}]
Let $(M,g,f)$ be a nontrivial gradient steady soliton, we have
\[
\Delta f+R=0\quad \mathrm{and}\quad R+|\nabla f|^2=a.
\]
Chen \cite{[Chen]} proved that $R\geq 0$, so $a>0$. It was proved in
\cite{[PW],[FG]} (see also \cite{[Wu]}) that $\liminf R=0$, and
either $R>0$ or $R\equiv 0$.

By the Bochner formula, we get
\[
\Delta_f R=-2|Ric|^2\leq 0.
\]
If $R>0$ on $M$, then it is a nonconstant positive $f$-superharmonic
function, and $\lim\inf_{x\to \infty}R(x)=0$. Therefore, by Lemma
\ref{criter}, we conclude $G(x,y)$ is positive.

\

If $R\equiv 0$, then by Proposition 4.3 in \cite{[PW]}, $(M^n,g)$
splits isometrically as $(N^{n-k}\times \mathbb{R}^k, g_N+g_0)$,
where $(N^{n-k},g_N)$ is a Ricci-flat manifold, and
$(\mathbb{R}^k,g_0,f)$ is a steady Gausian Ricci soliton with
$f=\langle u,x\rangle+v$ for some $u,v\in\mathbb{R}^n$. Therefore a
$f$-Green's function on $(\mathbb{R}^k,g_0,f)$ is a $f$-Green's
function on $(M,g,f)$.

By \cite{[WuWu]}, for one-dimensional steady Gaussian Ricci soliton,
the $f$-heat kernel is given by
\[
H_{\mathbb{R}}(x,y,t)=\frac{e^{\pm \frac{x+y}{2}}\cdot
e^{-t/4}}{(4\pi t)^{1/2}}
\times\exp\left(-\frac{|x-y|^2}{4t}\right).
\]
for any $x,y\in \mathbb{R}$ and $t>0$. Therefore for any $x,y\in
\mathbb{R}$,
\[
G(x,y)=\int^\infty_0H_{\mathbb{R}}(x,y,t) dt<\infty,
\]
hence there exists a positive $f$-Green function.

For higher dimensional steady Gaussian Ricci soliton
$(\mathbb{R}^k,\ g_0,f)$, define
\[
H_{\mathbb{R}^k}(x,y,t)=H_{\mathbb{R}}(x_1,y_1,t)\times
H_{\mathbb{R}}(x_2,y_2,t)\times\ldots\times
H_{\mathbb{R}}(x_k,y_k,t),
\]
where $x=(x_1,x_2,\ldots,x_k)\in \mathbb{R}^k$,
$y=(y_1,y_2,\ldots,y_k)\in \mathbb{R}^k$, and
$H_{\mathbb{R}}(x_i,y_i,t)$ is the $f$-heat kernel for
$(\mathbb{R},g_0,u_ix_i+v_i)$. It is easy to check that
$H_{\mathbb{R}^k}(x,y,t)$ is an $f$-heat kernel on $(\mathbb{R}^k,\
g_0,f)$.

Then for any $x,y\in \mathbb{R}^k$,
\[
G(x,y)=\int^\infty_0 H_{\mathbb{R}^k}(x,y,t) dt<\infty.
\]
Therefore there exists a positive $f$-Green function on an
$k$-dimensional steady Gaussian soliton.
\end{proof}

\bibliographystyle{amsplain}

\begin{thebibliography}{30}

\bibitem{[AN]} B. Andrews, L. Ni, Eigenvalue comparison on Bakry-Emery
manifolds, Comm. Partial Differential Equations 37 (2012), no. 11,
2081-2092.

\bibitem{[BE]}D. Bakry, M. Emery, Diffusion hypercontractivitives,
in: S\'{e}minaire de Probabilit\'{e}s XIX, 1983/1984, in: Lecture
Notes in Math., vol. 1123, Springer-Verlag, Berlin, 1985, pp.
177-206.

\bibitem{[BQ1]}D. Bakry, Z.-M. Qian, Some new results on eigenvectors
via dimension, diameter and Ricci curvature, Adv. Math. 155 (2000),
98-153.

\bibitem{[Bri]} K. Brighton, A Liouville-type theorem for smooth metric
measure spaces, J. Geome. Anal. 23 (2013), 562-570.

\bibitem{[Bus]} P. Buser, A note on the isoperimetric constant, Ann. Sci. Ecole Norm. Sup. 15 (1982), 213-230.

\bibitem{[Cao1]}H.-D. Cao, Recent progress on Ricci solitons, Recent advances in geometric analysis, Adv. Lect. Math. (ALM) 11, 1-38,
International Press, Somerville, MA 2010.

\bibitem{[CaoZhou]} H.-D. Cao, D. Zhou, On complete gradient shrinking Ricci solitons, J. Diff. Geom. 85 (2010), 175-186.

\bibitem{[CSW]} J. Case, Y.-S. Shu, and G. Wei, Rigidity of quasi-Einstein metrics, Diff. Geo. Appl. 29 (2011), 93-100.

\bibitem{[ChLu]} N. Charalambous, Z. Lu, Heat kernel estimates and the essential spectrum on weighted manifolds, J. Geome. Anal.
(2013), DOI 10.1007/s12220-013-9438-1

\bibitem{[Chen]} B.-L. Chen, Strong uniqueness of the Ricci flow, J. Diff. Geom. 82 (2009), 363-382.

\bibitem{[CZ]} X. Cheng, D. Zhou, Eigenvalues of the drifted Laplacian on complete metric measure spaces, arXiv: 1305.4116.

\bibitem{[CSS]} B. Colbois, A. Soufi, A. Savo, Eigenvalues of the Laplacian on a compact manifold with density, arXiv:1310.1490.

\bibitem{[DSWW]} X. Dai, C. J. Sung, J. Wang, G. Wei, in preparation.

\bibitem{[Davies]} E. B. Davies, Heat kernels and spectral theory, Cambridge Tracts in Mathematics, vol. 92, Cambridge U. Press, 1989.

\bibitem{[FG]} M Fern\'andez-L\'opez, E. Garc\'ia-R\'io, Maximum principles and gradient Ricci solitons, J. Differential Equations 251 (2011),
73-81.

\bibitem{[FLL]} A. Futaki, H. Li, X.-D. Li, On the first eigenvalue of the Witten�CLaplacian and the diameter of compact shrinking
solitons, Ann. Glob. Anal. Geom. 44 (2013), 105-114.

\bibitem{[Grig]} A. Grigor'yan, The heat equation on noncompact Riemannian manifolds, (Russian) Math. Sb. 182 (1991), 55-87;
translation in Math. USSR Sb. 72 (1992), 47-77.

\bibitem{[Grig2]} A. Grigor'yan, Heat kernel and analysis on manifolds. AMS/IP Studies in Advanced Mathematics, 47. Amer. Math. Soc., Providence, RI;
International Press, Boston, MA, 2009.

\bibitem{[Grig3]} A. Grigor'yan, Heat kernels on weighted manifolds and applications, The ubiquitous heat kernel, Contemp. Math., vol. 398,
Amer. Math. Soc., Providence, RI, 2006, pp. 93-191.

\bibitem{[HK]} P. Haj{\l}asz and P. Koskela, Sobolev meets Poincar\'e, C. R. Acad. Sci. Paris Sr. I Math. 320 (1995), 1211-1215.

\bibitem{[Hami]} R. Hamilton, The formation of singularities in the Ricci flow, Surveys in Differential Geom. 2 (1995), 7-136, International Press.

\bibitem{[Hass]} A. Hassannezhad, Eigenvalues of perturbed Laplace operators on compact manifolds. Pacific J. Math. 264 (2013), 333-354.

\bibitem{[Li0]} P. Li, Uniqueness of $L^1$ solutions for the Laplace equation and the heat equation on Riemannian manifolds, J. Diff.
Geom. 20 (1984), 447-457.

\bibitem{[LiTa1]}P. Li, L.-F. Tam, Positive harmonic functions on complete manifolds with non-negative curvature outside a compact set, Ann. of
Math. 125 (1987), 171-207.

\bibitem{[LiTa2]}P. Li, L.-F. Tam, Symmetric Green's functions on complete manifolds, Amer. J. Math. 109 (1987), 1129-1154.

\bibitem{[Li-Yau1]}P. Li, S.-T. Yau, On the parabolic kernel of the Schrodinger operator, Acta Math. 156 (1986), 153-201.

\bibitem{[LD]}X.-D. Li, Liouville theorems for symmetric diffusion operators on complete Riemannian manifolds, J. Math. Pure. Appl. 84 (2005), 1295-1361.

\bibitem{[Lott1]}J. Lott, Some geometric properties of the Bakry-\'{E}mery-Ricci tensor, Comment. Math. Helv. 78 (2003), 865-883.

\bibitem{[Lo-Vi]}J. Lott, C. Villani, Ricci curvature for metric-measure spaces via optimal transport, Ann. of Math. 169 (2009), 903-991.

\bibitem{[Ma]} M. Malgrange, Existence et approximation des solutions der \'equations aux d\'eriv\'ees partielles et des \'equations de convolution,
Ann. Inst. Fourier 6 (1955), 271-355.

\bibitem{[Moser]}J. Moser, A Harnack inequality for parabolic differential equations, Comm. Pure Appl. Math. 17 (1964), 101-134.

\bibitem{[Moser2]}J. Moser, On a pointwise estimate for parabolic differential equations, Comm. Pure Appl. Math. 24 (1971), 727-740.

\bibitem{[MuSe]} O. Munteanu, N. Sesum, On gradient Ricci solitons, J. Geom. Anal. 23 (2013), 539-561.

\bibitem {[MuWa]} O. Munteanu, J. Wang, Smooth metric measure spaces with nonnegative curvature, Comm. Anal. Geom. 19 (2011), 451-486.

\bibitem {[MuWa2]} O. Munteanu, J. Wang, Analysis of weighted Laplacian and applications to Ricci solitons, Comm. Anal. Geom. 20 (2012), 55-94.

\bibitem{[MuWa3]} O. Munteanu, J. Wang, Geometry of manifolds with densities, Adv. Math. 259 (2014), 269-305.

\bibitem{[PW]} P. Petersen, W. Wylie, Rigidity of gradient Ricci solitons, Pacific J. Math. 241 (2009), 329-345.

\bibitem{[PRG]} S. Pigola, M. Rimoldi, A.G. Setti, Remarks on non-compact gradient Ricci solitons, Math. Z. 268 (2011), 777-790.

\bibitem{[Qian]} Z. Qian, Estimates for Weighted Volumes and Applications, Quart. J. Math. Oxford Ser. 48 (1997), 235-242.

\bibitem{[Saloff]} L. Saloff-Coste, A note on Poincar\'e, Sobolev, and Harnack inequalities, Internat. Math. Res. Notices (1992), 27-38.

\bibitem{[Saloff1]}L. Saloff-Coste, Uniformly elliptic operators on Riemannian manifolds, J. Diff. Geom. 36 (1992), 417-450.

\bibitem{[Saloff2]}L. Saloff-Coste, Aspects of Sobolev-type inequalities. London Mathematical Society Lecture Note Series, 289. Cambridge
University Press, Cambridge, 2002.

\bibitem{[SWW]}B.-Y. Song, G.-F. Wei, G.-Q. Wu, Monotonicity formulas for Bakry-\'Emery Ricci curvature,  arXiv:1307.0477.

\bibitem{[Varo]} N. Varopoulos, The Poisson kernel on positively curved manifolds, J. Funct. Anal. 44 (1981), 359-380.

\bibitem{[WW]}G.-F. Wei, W. Wylie, Comparison geometry for the Bakry-\'{E}mery Ricci tensor, J. Diff. Geom. 83 (2009), 377-405.

\bibitem{[Wu3]} J.-Y. Wu, Upper Bounds on the First Eigenvalue for a Diffusion Operator via Bakry-\'Emery Ricci Curvature II, Results Math.
63(2013), 1079-1094.

\bibitem{[Wu2]} J.-Y. Wu, $L^p$-Liouville theorems on complete smooth metric measure spaces, Bull. Sci. Math. 138 (2014), 510-539.

\bibitem{[WuWu]} J.-Y. Wu, P. Wu, Heat kernels on smooth metric measure spaces with nonnegative curvature, arxiv.1401.6155.

\bibitem{[Wu]} P. Wu, On the potential function of gradient steady Ricci solitons, J. Geom. Anal. 23 (2013), 221-228.

\bibitem{[Zhu]}S.-H. Zhu, The comparison geometry of Ricci curvature. In: Comparison geometry (Berkeley, CA, 1993-94), volume 30 of Math. Sci.
Res. Inst. Publ, pages 221-262. Cambridge Univ. Press, Cambridge,
1997.
\end{thebibliography}

\end{document}